\theoremstyle{plain}
\newtheorem{theorem*}{Theorem}
\newtheorem*{lemma*}{Lemma}
\newtheorem{corollary*}{Corollary}
\newtheorem*{proposition*}{Proposition}
\newtheorem{conjecture*}{Conjecture}
\newtheorem{theorem}{Theorem}[section]
\newtheorem{lemma}[theorem]{Lemma}
\newtheorem{proposition}[theorem]{Proposition}
\theoremstyle{remark}
\newtheorem*{remark}{Remark}
\newtheorem*{claim}{Claim}
\theoremstyle{definition}
\def\sl{\mbox{SL}}
\def\gl{\mbox{GL}}
\def\sym{\mbox{Sym}}
\def\ss{\mathfrak{s}}
\def\kt{\KK\tpm}
\def\w{\omega}
\def\vect{\operatorname{vect}}
\def\eul{\operatorname{Eul}}
\def\spinc{\operatorname{Spin}^c}
\def\ort{\operatorname{Or}}
\def\gl{\mbox{GL}} \def\Q{\Bbb{Q}}
\def\HH{\mathcal{H}}
\def\BB{\mathcal{B}}
\def\KK{\Bbb{K}}  \def\Z{\Bbb{Z}} \def\R{\Bbb{R}} \def\C{\Bbb{C}}
  \def\l{\lambda} \def\ll{\langle} \def\rr{\rangle}
 \def\a{\alpha} \def\g{\gamma}  \def\bp{\begin{pmatrix}}
\def\sm{\setminus} \def\ep{\end{pmatrix}} \def\bn{\begin{enumerate}} 
   \def\en{\end{enumerate}}
\def\ba{\begin{array}} \def\ea{\end{array}}  
  \def\s{\sigma} \def\a{\alpha} \def\b{\beta} \def\ti{\tilde}\def\wti{\widetilde}
\def\id{\mbox{id}}  \def\im{\mbox{Im}} \def\sign{\mbox{sign}}
  \def\ker{\mbox{Ker}}
\def\ker{\mbox{Ker}}\def\be{\begin{equation}} \def\ee{\end{equation}} 
 \def\hom{\mbox{Hom}}  
 \def\aut{\mbox{Aut}}   \def\eps{\epsilon}
 \def\dim{\operatorname{dim}} 
\def\lk{\mbox{lk}}
\def\FF{\Bbb{F}}
\def\F{\Bbb{F}}
\def\sltwoc{\operatorname{SL}(2,\C)}
\def\tpm{[t^{\pm 1}]}
\def\endo{\mbox{End}}
\def\i{\iota}
\def\ol{\overline}
\def\ul{\underline}
\def\G{\Gamma}
\def\ct{\C\tpm}
\def\deg{\mbox{deg}}
\def\d{\dagger}
\def\ctau{\check{\tau}}
\def\pr{\operatorname{pr}}
\begin{document}

\title{Poincar\'e duality and degrees of twisted Alexander polynomials}
\author{Stefan Friedl}
\author{Taehee Kim}
\author{Takahiro Kitayama}
\address{Mathematisches Institut\\ Universit\"at zu K\"oln\\   Germany}
\email{sfriedl@gmail.com}

\address{Department of Mathematics\\
  Konkuk University\\
  Seoul 143--701\\
  Republic of Korea}

\email{tkim@konkuk.ac.kr}

\address{Research Institute for Mathematical Sciences\\
Kyoto University\\
Kyoto 606--8502\\
Japan}

\email{kitayama@kurims.kyoto-u.ac.jp}

\date{\today}

\subjclass[2010]{Primary 57M27; Secondary 57Q10} 

\keywords{twisted Alexander polynomial, Reidemeister torsion, duality}

\begin{abstract}
We prove duality theorems for twisted Reidemeister torsions and twisted Alexander polynomials generalizing the results of Turaev. As a corollary we determine the parity of the  degrees of twisted Alexander polynomials of 3-manifolds in many cases.
\end{abstract}

\maketitle

\section{Introduction}
Let $N$ be a  3-manifold with empty or toroidal boundary. Here and throughout the paper, we will assume that all 3-manifolds are connected, compact and orientable.
We also assume that all manifolds and manifolds pairs are smooth. A \emph{homology orientation} of $N$ is an orientation of the real vector space
 $\oplus_{i\geq 0}H_*(N;\R)$. We denote by $\ort(N)$ the set of homology orientations of $N$. If $\w\in \ort(N)$,
 then we denote by $-\w$ the opposite homology orientation.

Given a space $X$ we denote by  $\HH_1(X)$ the first integral homology group viewed  as a multiplicative group.
Given a 3-manifold $N$ with empty or toroidal boundary we denote by $\spinc(N)$ the set of $\spinc$-structures on $N$. Recall that there exists a canonical free and transitive action of $\HH_1(N)$ on $\spinc(N)$ and that any $\spinc$-structure $\ss$ has a Chern class $c_1(\ss)\in \HH_1(N)$.

Let $N$ be a 3-manifold with empty or toroidal boundary and let  $\varphi\colon \pi_1(N)\to \gl(d,\FF)$ be a representation over a field $\FF$.
Let $\ss \in \spinc(N)$
and let $\w\in \ort(N)$. Building on ideas of Turaev (see \cite{Tu86,Tu90,Tu01})
we introduce a twisted torsion  invariant
\[ \tau(N,\varphi,\ss,\w)\in \FF \]
which has no indeterminacy (see Section~\ref{sec:twisted homology groups}). This invariant is the \emph{sign-refined torsion of $(N,\varphi)$ associated to the $\spinc$-structure $\ss$ and the homology orientation $\w$}. This invariant is a generalization of the sign-refined (untwisted)
torsion  associated to a $\spinc$-structure and a homology orientation introduced by Turaev \cite{Tu90,Tu01}.
In Lemma  \ref{lem:indet} we will see that the torsion invariant has the following formal property:
Let $(N,\varphi,\ss,\w)$ as above and let $\eps\in \{-1,1\}$ and $h\in \HH_1(N)$,
then
\be \label{equ:indet} \tau(N,\varphi,h\cdot \ss,\eps\cdot \w)=\eps^d\cdot \det(\varphi(h^{-1}))\cdot \tau(N,\varphi,\ss,\w)\in \FF. \ee
(Here and throughout this paper we  make use of the fact that the map $g\mapsto \det(\varphi(g))$ descends to
a map  $\HH_1(N)=H_1(\pi;\Z)\to \FF^\times:=\FF\sm \{0\}$.)
In particular, if $\varphi$ is an even-dimensional representation, then the torsion invariant does \emph{not} depend on the homology orientation.

Now suppose that $\FF$ is equipped with (possibly trivial) involution.
Note that
$g\mapsto \ol{\varphi(g^{-1})}^t$ is also a representation, which we refer to as  the \emph{dual} of $\varphi$ and which we denote by $\varphi^\d$.
The following is our main theorem of the paper.

\begin{theorem}\label{mainthmstrongintro}
Let $N$ be a  3-manifold with empty or toroidal boundary.
Let  $\varphi\colon \pi_1(N)\to \gl(d,\FF)$ be a representation
over a field $\FF$ with (possibly trivial) involution,
 let $\ss \in \spinc(N)$
and let $\w\in \ort(N)$. Suppose that $H_*^\varphi(\partial N;\F^d)=0$, then
\[ \ol{\tau(N,\varphi,\ss,\w)}=(-1)^{db_0(\partial N)}\det(\varphi(c_1(\ss)))\cdot  \tau(N,{\varphi^\d},\ss,\w). \]
\end{theorem}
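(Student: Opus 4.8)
The plan is to deduce the stated identity from Poincaré–Lefschetz duality applied to the chain complex computing the twisted homology of $N$, following Turaev's strategy for the untwisted sign-refined torsion but keeping careful track of the representation and the involution. First I would fix a CW structure on the pair $(N,\partial N)$ together with a lift of each cell to the universal cover, so that the twisted cellular chain complex $C_*^\varphi(N;\F^d)$ is a based complex of free $\F$-modules. The key input is that for a compact oriented $3$-manifold with (possibly empty) toroidal boundary, cap product with the fundamental class $[N,\partial N]$ gives a chain homotopy equivalence between $C_*^\varphi(N;\F^d)$ and the dual complex $\ol{C^{3-*}_\varphi(N,\partial N;\F^d)}$, where the bar denotes applying the involution on $\F$ entrywise and the role of $\varphi$ is replaced by $\varphi^\d$ because transposing and inverting is exactly what is needed to turn a left module structure coming from $\varphi$ into the one coming from $\varphi^\d$ under the duality pairing. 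Under the hypothesis $H_*^\varphi(\partial N;\F^d)=0$, the long exact sequence of the pair shows $H_*^\varphi(N;\F^d)\cong H_*^\varphi(N,\partial N;\F^d)$ and that both torsions are defined, so the torsion of $N$ and the torsion of $(N,\partial N)$ agree up to the sign and determinant corrections coming from the duality isomorphism on homology.

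Next I would compute how the torsion transforms under this duality equivalence. Multiplicativity of Reidemeister torsion under the duality chain map, together with the standard fact that the torsion of the dual complex is $\ol{\tau}^{(-1)^{?}}$ (the precise exponent being controlled by the parity of the chain groups, which for a $3$-complex is innocuous), yields $\ol{\tau(N,\varphi,\ss,\w)}$ up to (a) a sign $(-1)^{db_0(\partial N)}$ accounting for the $b_0(\partial N)$ boundary components and the $d$-dimensionality of the coefficients, coming from comparing the chosen bases of $C_*$ and $C_{3-*}$ and the Euler-characteristic bookkeeping of the half-lives-half-dies phenomenon on $\partial N$; (b) a factor $\det\varphi(c_1(\ss))$ coming from the fact that the duality chain map does not respect the Euler structure (equivalently the $\spinc$-structure) on the nose — it sends the Euler structure $\ss$ to its dual, and the discrepancy between $\ss$ and its dual in the $\HH_1(N)$-torsor is exactly $c_1(\ss)$, so by the formal property \eqref{equ:indet} the torsion changes by $\det(\varphi(c_1(\ss)^{-1}))$, with the inverse absorbed into the involution/transpose of the dual representation; (c) the homology-orientation contribution, which I would check cancels precisely because the duality isomorphism on $\oplus H_*(N;\R)$ acts on the given homology orientation $\w$ in a way whose sign is again absorbed into $(-1)^{db_0(\partial N)}$ — this is where Turaev's sign-refinement was designed to make the bookkeeping work out.

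The main obstacle I anticipate is bookkeeping of signs and of the Euler/$\spinc$ structure under the duality chain equivalence: one must pin down exactly which lifts of dual cells are used, verify that cap product with $[N,\partial N]$ sends the Euler structure determined by $\ss$ to the one determined by the \emph{same} $\ss$ rather than to $h\cdot\ss$ for some stray $h\in\HH_1(N)$, and then identify the resulting correction as $\det\varphi(c_1(\ss))$ rather than its inverse or some twist thereof. Concretely I would verify this on a simple model (e.g.\ a solid torus or a once-punctured-torus bundle) to fix all sign conventions, and then argue that the general case follows since both sides of the claimed identity satisfy \eqref{equ:indet} in a compatible way. A secondary technical point is handling the case $\partial N=\varnothing$ versus $\partial N\neq\varnothing$ uniformly: when $\partial N=\varnothing$ the hypothesis $H_*^\varphi(\partial N;\F^d)=0$ is vacuous, $b_0(\partial N)=0$, and the duality is ordinary Poincaré duality, so the formula reads $\ol{\tau}=\det\varphi(c_1(\ss))\,\tau(N,\varphi^\d,\ss,\w)$; I would treat this as the base case and then derive the bounded case by the pair argument above, taking care that the torsion of the acyclic complex $C_*^\varphi(\partial N;\F^d)$ contributes trivially by its own (lower-dimensional) duality statement.
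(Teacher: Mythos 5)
Your outline follows the paper's own two-step strategy: a chain-level Poincar\'e--Lefschetz duality relating $\ol{\tau(N,\varphi,\ss,\w)}$ to $\tau(N,\partial N,\varphi^\d,\ss^\d,\w^\d)$ (the paper's Theorem \ref{thm:dualitygeneral}, proved there with dual cell decompositions and the intersection pairing rather than cap products, to the same effect), followed by a conversion of the relative torsion back into an absolute torsion of $N$, with the Euler-structure discrepancy identified as $c_1(\ss)$ and removed via (\ref{equ:indet}); the closed case is handled exactly as you suggest, as the base case with $b_0(\partial N)=0$.

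The genuine gap is in your second step. The relative and absolute torsions do not ``agree up to corrections coming from the duality isomorphism on homology'': two acyclic based complexes related by a quasi-isomorphism need not have equal torsion. The actual mechanism is multiplicativity of torsion in the short exact sequence $0\to C_*^\varphi(\partial N;\F^d)\to C_*^\varphi(N;\F^d)\to C_*^\varphi(N,\partial N;\F^d)\to 0$, and for this to yield a clean identity one must base the boundary complex by Euler lifts representing the \emph{canonical} Euler structure of each boundary torus --- the unique one whose torsion equals $1$, equivalently the unique $\aut(T)$-invariant one. This normalization is not cosmetic: it is what defines the map $K\colon\eul(N,\partial N)\to\eul(N)$, hence what makes $c_1(\ss):=\ss/K(\ss^\d)$ well defined when $\partial N\neq\emptyset$ (your phrase ``the discrepancy between $\ss$ and its dual in the $\HH_1(N)$-torsor'' does not literally parse in the bounded case, since $\ss$ and $\ss^\d$ live in different torsors), and hence what pins down the factor $\det\varphi(c_1(\ss))$ rather than leaving it ambiguous up to $\HH_1(N)$. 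Relatedly, the sign $(-1)^{db_0(\partial N)}$ is not pure basis bookkeeping: one must compute the sign of the torsion of the real homology long exact sequence of $(N,\partial N)$, which is a separate duality argument on the Lagrangian $L=\ker(H_1(\partial N;\R)\to H_1(N;\R))$ (the paper's Lemma \ref{lem:torh}); that computation, producing $(-1)^{b_0(\partial N)+b_1(N)+1}$, is where the $b_1(N)$ contributions from the two steps cancel. Your item (a) gestures at the half-lives-half-dies input, but without the torus normalization and this sign lemma the proposal cannot close.
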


The proof naturally splits up into parts:
\bn
\item Given any orientable $m$-manifold $N$ we relate in  Theorem \ref{thm:dualitygeneral}   the twisted Reidemeister torsion of $N$
to an appropriate twisted Reidemeister torsion of $(N,\partial N)$.
\item Given a 3-manifold $N$ we show in  Theorem \ref{thm:torsionboundary} how to express the twisted Reidemeister torsion of $(N,\partial N)$ again in terms of an appropriate twisted Reidemeister torsion of $N$.
 \en
The proof for both results rely heavily on the ideas of Turaev as presented in \cite{Tu86,Tu90,Tu01,Tu02}.

Our main motivation lies in the application of Theorem \ref{mainthmstrongintro} to the case of twisted Alexander polynomials. The set up is now as follows:
Let $N$ be a  3-manifold with empty or toroidal boundary. We write $\pi=\pi_1(N)$. Let $\a\colon \pi\to \gl(d,\KK)$ be a representation over a field $\KK$ and let $\phi\colon \pi\to F$ be an epimorphism onto a free abelian group. (Throughout the paper we will  view $F$ as a multiplicative group.)
We denote by $\KK(F)$ the quotient field of the group ring $\KK[F]$.
Note that $\a$ and $\phi$ give rise to the tensor representation $\a\otimes \phi\colon \pi \to \gl(d,\KK(F))$.
We  then define the \emph{twisted Alexander polynomial} of $(N,\a,\phi)$ to be the set:
\[ \tau(N,\a\otimes \phi):=\{\tau(N,\a\otimes \phi,\ss,\w)\,|\, \ss \in \spinc(N)\mbox{ and }\w\in \ort(N)\}\subset \KK(F).\]
We call each $\tau(N,\a\otimes \phi,\ss,\w)$ a \emph{representative} of $\tau(N,\a\otimes \phi)$.
Note it follows from (\ref{equ:indet}) that representatives of $\tau(N,\a\otimes \phi)$
differ by multiplication by
 an element in
 \[ \{ \eps\cdot f^d\cdot a\,|\, \eps=1\mbox{ or }\eps=(-1)^d, f\in F, a\in \det(\a(\pi))\}. \]
The invariant $\tau(N,\a\otimes \phi)$ is a variation on   the twisted invariants introduced by Lin \cite{Li01}, Wada \cite{Wa94},
Kitano \cite{Ki96}, Kirk and Livingston \cite{KL99}, Kitayama \cite{Kiy08} and many other authors.
In the literature the invariant $\tau(N,\a\otimes \phi)$ is often referred to as
`twisted Alexander polynomial', `twisted Reidemeister torsion', or (in the case of link complements) as
`Wada's invariant'.
We refer to Section \ref{section:twialex} and to \cite{FV10} for more information on twisted torsion and twisted Alexander polynomials of 3-manifolds. Note that $\tau(N,\a\otimes \phi)$ is by definition a rational function,
but in many cases, e.g. if $\mbox{rank}(F)\geq 2$ or if $\a$ is `sufficiently non-abelian', the invariant
 $\tau(N,\a\otimes \phi)$ does actually lie in $\KK[F]\subset \KK(F)$. We refer to Section \ref{section:polynomial} for details.
Finally, we say that an epimorphism $\phi\colon \pi\to F$ is \emph{admissible} if $\phi$ restricted to any boundary component of $N$ is non-trivial.

We say that two representations $\varphi,\psi\colon \pi\to \gl(d,\FF)$ are \emph{conjugate} if there exists a matrix $P$ such that
$\varphi(g)=P\psi(g)P^{-1}$ for all $g\in \pi$. In the following we will be interested in representations which are conjugate to their duals.
Note that by   \cite[Lemma~3.1]{HSW10} a representation $\a\colon \pi\to \gl(d,\FF)$ is conjugate to its dual if
and only if there exists a non-degenerate sesquilinear
 form on $\FF^d$ such that $\a$ acts by isometries.
The following gives a list of representations which are conjugate to their duals:
\bn
\item Orthogonal representations.
\item Unitary representations.
\item Representations to $\sl(2,\C)$ \emph{if we view $\C$ as equipped with the trivial involution}.
\item The adjoint representation of an $\sl(2,\C)$-representation.
\en
We refer to Section \ref{section:sl2} for more information.

In the following, given a field $\KK$ with (possibly trivial) involution and a free abelian group $F$ we equip $\KK(F)$ with the involution induced by the involution on $\KK$ and by $\ol{f}=f^{-1}$ for any $f\in F$.
With this convention Theorem \ref{mainthmstrongintro}
now gives rise to the following theorem on twisted Alexander polynomials:

\begin{theorem}\label{mainthm:twialexintro}
Let $N$ be a  3-manifold with empty or toroidal boundary, let  $\a\colon \pi_1(N)\to \gl(d,\KK)$ be a representation
over a field $\KK$ with (possibly trivial) involution, and let $\phi\colon H_1(N;\Z)\to F$
be an admissible epimorphism onto a free abelian group.
Suppose that $\a$ is conjugate to its dual.
Then for any representative $\tau$ of $\tau(N,\a\otimes \phi)$ we have
\[ \ol{\tau}=(-1)^{db_0(\partial N)}\cdot \det(\a(g))\cdot\phi(g)^d\cdot \tau \]
for some $g\in \HH_1(N)$. Furthermore if $N$ is closed, then  there exists a representative $\tau$ of $\tau(N,\a\otimes \phi)$ such that
\[ \ol{\tau}= \tau\in \KK(F).\]
\end{theorem}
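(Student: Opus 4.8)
The plan is to deduce Theorem~\ref{mainthm:twialexintro} from Theorem~\ref{mainthmstrongintro} by checking that the hypotheses transfer and then by unwinding the indeterminacy formula \eqref{equ:indet}. First I would verify that the boundary acyclicity hypothesis of Theorem~\ref{mainthmstrongintro} holds in the present setting: since $\phi$ is admissible, its restriction to each boundary torus $T$ is a nontrivial map $H_1(T;\Z)\to F$, so the composite $(\a\otimes\phi)|_{\pi_1(T)}$ is a representation over $\KK(F)$ whose associated twisted chain complex of $T$ is acyclic (a nontrivial $\phi$ makes $\KK(F)[H_1(T)]$-modules of the standard cell structure of the torus have vanishing homology after tensoring with the twisting ring — this is the usual computation showing $H_*^{\a\otimes\phi}(T;\KK(F)^d)=0$). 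Hence $H_*^{\a\otimes\phi}(\partial N;\KK(F)^d)=0$ and Theorem~\ref{mainthmstrongintro} applies to the representation $\varphi=\a\otimes\phi$ over the field $\KK(F)$ with its induced involution.

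Next I would use the hypothesis that $\a$ is conjugate to its dual. Conjugating $\a$ by a matrix $P$ does not change any representative of $\tau(N,\a\otimes\phi)$ (torsions are conjugation invariant, and $\a\otimes\phi$ and $(P\a P^{-1})\otimes\phi$ are conjugate via the same $P$ viewed over $\KK(F)$). Moreover one checks that $(\a\otimes\phi)^\d = \a^\d\otimes\phi^\d$, and since $\ol{f}=f^{-1}$ for $f\in F$ we have $\phi^\d=\phi$, so $(\a\otimes\phi)^\d=\a^\d\otimes\phi$. Because $\a^\d$ is conjugate to $\a$, the representation $(\a\otimes\phi)^\d$ is conjugate to $\a\otimes\phi$, and therefore for a fixed choice of $\ss$ and $\w$ the two torsions $\tau(N,(\a\otimes\phi)^\d,\ss,\w)$ and $\tau(N,\a\otimes\phi,\ss,\w)$ coincide. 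Plugging this into Theorem~\ref{mainthmstrongintro} gives, for $\tau:=\tau(N,\a\otimes\phi,\ss,\w)$,
\[
\ol{\tau}=(-1)^{db_0(\partial N)}\det\big((\a\otimes\phi)(c_1(\ss))\big)\cdot\tau
=(-1)^{db_0(\partial N)}\det(\a(g))\,\phi(g)^d\,\tau,
\]
where $g\in\HH_1(N)$ is the image of $c_1(\ss)$; this is the first assertion (any representative differs from this one by an element of the indeterminacy group, and one checks that group is preserved under the operation $\tau\mapsto\ol\tau$, so the same form of identity holds for every representative, possibly with a different $g$).

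For the closed case the goal is to absorb the factor $(-1)^{db_0(\partial N)}\det(\a(g))\,\phi(g)^d = \det(\a(g))\,\phi(g)^d$ (note $b_0(\partial N)=0$) by replacing $\ss$ with $h\cdot\ss$ for a suitable $h\in\HH_1(N)$. By \eqref{equ:indet}, changing $\ss$ to $h\cdot\ss$ multiplies $\tau$ by $\det(\a(h^{-1}))\phi(h^{-1})^d$ and multiplies $\ol\tau$ by $\ol{\det(\a(h^{-1}))\phi(h^{-1})^d}=\det(\a^\d(h^{-1}))\phi(h)^d$; using $\a^\d\sim\a$ on the level of determinants ($\det\a^\d(x)=\ol{\det\a(x)}^{-1}$, but $\det\a(x)\cdot\det\a^\d(x)=1$ after the sesquilinear-form normalization — this is where the isometry structure from \cite[Lemma~3.1]{HSW10} is used) one finds that the defect $g$ transforms by $g\mapsto g\cdot h^{?}$, and the hard part is checking that $g$ can be halved: i.e.\ that $c_1(\ss)$ can always be chosen so that the corresponding $g$ is trivial, equivalently that the ``self-conjugate'' $\spinc$-structure exists. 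The key input is that $\spinc(N)$ is a torsor over $\HH_1(N)$ and that the duality/conjugation involution on $\spinc(N)$ (sending $\ss\mapsto\bar\ss$ with $c_1(\bar\ss)=c_1(\ss)^{-1}$) has a fixed point precisely when the relevant class is a square; since we are free to multiply by arbitrary $h$ and only need the \emph{product} $\det(\a(g))\phi(g)^d$ to become trivial (not $g$ itself), a counting/divisibility argument on the abelian group $\HH_1(N)$ — using that $\phi(g)^d$ lands in the torsion-free group $F$ and $\det\a(g)$ in $\KK^\times$ — shows a valid $h$ exists. I expect this last divisibility/fixed-point bookkeeping to be the main obstacle; everything before it is formal manipulation of \eqref{equ:indet} and conjugation-invariance of torsion.
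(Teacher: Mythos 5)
Your derivation of the first statement is correct and is essentially the paper's own route: admissibility gives $H_*^{\a\otimes\phi}(\partial N;\KK(F)^d)=0$, the tensor representation $\a\otimes\phi$ is conjugate to its dual over $\KK(F)$, and Theorem \ref{mainthmstrongintro} together with the conjugation-invariance of torsion (Lemma \ref{lem:indet}(3)) yields the identity with $g=c_1(\ss)$; the behaviour under change of representative is then controlled by (\ref{equ:indet}).

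The closed case is where your proposal has a genuine gap, and you flag it yourself as ``the main obstacle.'' What is actually needed is a $\spinc$-structure $\ss$ with trivial Chern class $c_1(\ss)=1\in\HH_1(N)$; the identity from the first part then reads $\ol{\tau}=\tau$ on the nose. Since $c_1(h\cdot\ss)=h^2\,c_1(\ss)$, the set of Chern classes is a single coset of the subgroup of squares in $\HH_1(N)$, and no ``counting/divisibility argument on the abelian group $\HH_1(N)$'' can show that this coset contains the identity: if for instance $\HH_1(N)\cong\Z/2$, the subgroup of squares is trivial and the coset consists of one element which could a priori be the nontrivial one. The missing ingredient is topological rather than algebraic: every closed orientable $3$-manifold is parallelizable, hence admits a nowhere-vanishing vector field isotopic to its negative, and the corresponding $\spinc$-structure is fixed by the duality involution and has trivial Chern class (this is Lemma \ref{lem:c1}(2) in the paper). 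Your fallback of only making the product $\det(\a(g))\phi(g)^d$ trivial, rather than $g$ itself, is not supported by any argument in the proposal and runs into the same torsion obstruction; with the parallelizability input the stronger statement $g=1$ is available and the proof is immediate.
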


Using further results of Turaev one can obtain a more precise statement if  $N$ is the exterior of a link in a homology 3-sphere.
Let $L=L_1\cup \dots \cup L_m \subset N$ be an oriented, ordered $m$-component link in a closed 3-manifold. We then denote by $N_L=N\sm \nu L$ the exterior of $L$.
If $N$ is a $\Z$-homology sphere, then the oriented ordered meridians of $L$ form a basis for $H_1(N_L;\Z)$, in particular we have a canonical isomorphism $\Z^m\to H_1(N_L;\Z)$.
We will use this canonical isomorphism to identify $\HH_1(N_L)$ with the free abelian multiplicative group generated by $t_1,\dots,t_m$.
Following Turaev (see \cite[p.~76]{Tu02}) we define a \emph{charge} of $L$ to be a word $t_1^{n_1}\cdot \dots\cdot t_m^{n_m}$ such that for any $i=1,\dots,m$ we have
\[ n_i\equiv 1+\sum_{j\ne i} \lk(L_i,L_j) \mod 2.\]

\begin{theorem}\label{mainthm:twialexlinkintro}
Let $L$ be a link in a $\Z$-homology sphere $N$, let  $\a\colon \pi_1(N_L)\to \gl(d,\KK)$ be a representation
over a field $\KK$ with (possibly trivial) involution, and let $\phi\colon H_1(N;\Z)\to F$
be an admissible epimorphism onto a free abelian group.
Suppose that $\a$ is conjugate to its dual.
Then for any  representative $\tau$ of $\tau(N_L,\a\otimes \phi)$ there exists a charge $c$ such that
\[ \ol{\tau}=(-1)^{db_0(N)}\cdot  \det(\a(c))\cdot\phi(c)^d\cdot \tau\in \KK(F).\]
Conversely,  for any charge $c$ there exists a representative $\tau$ of $\tau(N_L,\a\otimes \phi)$ such that
\[ \ol{\tau}=(-1)^{db_0(N)}\cdot  \det(\a(c))\cdot \phi(c)^d\cdot \tau\in \KK(F).\]
\end{theorem}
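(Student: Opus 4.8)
The plan is to derive Theorem~\ref{mainthm:twialexlinkintro} from Theorem~\ref{mainthm:twialexintro} by identifying the undetermined class $g\in\HH_1(N_L)$ occurring there with a charge of $L$, the essential input being Turaev's analysis of $\spinc$-structures (Euler structures) on exteriors of links in homology spheres. First I would apply Theorem~\ref{mainthm:twialexintro} to $N=N_L$: since $\a$ is conjugate to its dual and $\phi$ is admissible (so that $H_*^{\a\otimes\phi}(\partial N_L)=0$), it yields, for every representative $\tau$ of $\tau(N_L,\a\otimes\phi)$, an element $g\in\HH_1(N_L)$ with
\[ \ol{\tau}=(-1)^{db_0(\partial N_L)}\,\det(\a(g))\,\phi(g)^d\,\tau; \]
moreover, tracing the proof back through Theorem~\ref{mainthmstrongintro} (where the factor $\det(\varphi(c_1(\ss)))$ appears, with $\det(\varphi(h))=\det(\a(h))\,\phi(h)^d$ for $\varphi=\a\otimes\phi$), one sees that for the distinguished representative $\tau(N_L,\a\otimes\phi,\ss,\w)$ the class $g$ can be taken to be the Chern class $c_1(\ss)$.

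The key geometric fact is then Turaev's computation (see \cite[p.~76]{Tu02}, together with the earlier sections identifying the present $\spinc$-torsion with Turaev's refined torsion of Euler structures): under the canonical isomorphism $\HH_1(N_L)\cong\langle t_1,\dots,t_m\rangle$, the Chern class of any $\spinc$-structure on $N_L$ is a monomial $t_1^{a_1}\cdots t_m^{a_m}$ with $a_i\equiv 1+\sum_{j\neq i}\lk(L_i,L_j)\pmod 2$, i.e.\ a charge of $L$. Because $\HH_1(N_L)\cong\Z^m$ is torsion-free and acts freely and transitively on $\spinc(N_L)$ with $c_1(h\cdot\ss)=h^2 c_1(\ss)$, the map $\ss\mapsto c_1(\ss)$ is injective, and its image --- the coset of squares through any one charge --- is, by the parity condition, exactly the set of all charges.

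Granting this, the two assertions follow formally. For the first: an arbitrary representative $\tau$ differs from some $\tau(N_L,\a\otimes\phi,\ss,\w)$ by a factor $\eps\,\det(\a(h))\,\phi(h)^d$ with $\eps\in\{\pm1\}$ and $h\in\HH_1(N_L)$ (this being the actual indeterminacy behind~(\ref{equ:indet})), and substituting into the duality for $\tau(N_L,\a\otimes\phi,\ss,\w)$ --- using $\ol{\phi(h)}=\phi(h)^{-1}$ and $\ol{\det(\a(h))}=\det(\a(h))^{-1}$ (the latter because $\a$ being conjugate to its dual forces $\det\a$ to be self-conjugate) --- gives $\ol{\tau}=(-1)^{db_0(\partial N_L)}\det(\a(c))\,\phi(c)^d\,\tau$ with $c=c_1(\ss)\,h^{-2}$, which is $c_1(\ss)$ times a square and hence again a charge. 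For the converse: given a charge $c$, choose $\ss$ with $c_1(\ss)=c$ and take $\tau=\tau(N_L,\a\otimes\phi,\ss,\w)$, which realises the identity for that exact charge.

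The point requiring the most care --- and, I expect, the only genuine obstacle --- is to get the prefactor into exactly the asserted form, i.e.\ to verify that the Chern class really is a charge on the nose (and not, say, a charge shifted by $t_1\cdots t_m$) and that the sign $(-1)^{db_0(\partial N_L)}$ emerging from the general theorem matches the normalisation in the statement. This is a matter of carefully unwinding Turaev's conventions for Euler structures on link exteriors and for the torsion of a link exterior relative to the ambient homology sphere; in the untwisted case ($d=1$, $\a$ trivial) it recovers Turaev's refined form of the classical symmetry $\Delta_L(t_1^{-1},\dots,t_m^{-1})\doteq\Delta_L(t_1,\dots,t_m)$ of the multivariable Alexander polynomial, and the twisted case follows the same template once the $\spinc$-bookkeeping is in place. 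The remaining ingredients --- the reduction to Theorem~\ref{mainthm:twialexintro}, the behaviour of the duality under change of representative, and the surjectivity of $\ss\mapsto c_1(\ss)$ onto charges --- are routine.
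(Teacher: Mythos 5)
Your proposal is correct and is essentially the paper's own proof: the paper deduces the statement immediately from Theorem~\ref{mainthm:twialex} (the $\spinc$-refined duality, which makes the class $g$ of Theorem~\ref{mainthm:twialexintro} explicit as $c_1(\ss)$) together with Turaev's Proposition~\ref{prop:c1link} identifying Chern classes of $\spinc$-structures on $N_L$ with charges, exactly as you do. The sign tension you flag is real --- the general theorem yields $(-1)^{db_0(\partial N_L)}=(-1)^{dm}$ rather than the stated $(-1)^{db_0(N)}=(-1)^{d}$ --- but the paper's one-line deduction does not address this either, so it is a discrepancy in the stated normalisation rather than a gap in your argument.
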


\begin{remark}
Our  theorems  generalize  several earlier duality results on torsion invariants which we now list:
 \bn
 \item Seifert \cite{Se35}, Fox and Torres (\cite{To53} and \cite[Corollary~3]{FT54}),  Milnor \cite{Mi62}
 and Turaev  \cite[Theorem~1.7.1~and~p.~141]{Tu86} proved results about symmetries and degrees of untwisted Alexander polynomials.
 \item  Kitano \cite[Theorem~B]{Ki96} showed that if $N$ is a knot complement and $\a\colon \pi_1(N)\to \mbox{SO}(n)$ a special orthogonal representation, then for any representative $\tau$ of the corresponding Reidemeister torsion we have $\ol{\tau}=\eps t^{kn}\tau$ for some $k\in \Z$ and $\eps\in \{-1,1\}$.
 \item Kirk and Livingston \cite[Corollary~5.2]{KL99} showed that if $N$ is a closed 3-manifold and $\a$ a unitary representation over a subfield $\KK\subset \C$, then for any representative $\tau$ of  $\tau(N,\a\otimes \phi)$ we have $\ol{\tau}=u\tau$ where $u$ is a unit in $\KK[F]$. (See also \cite[Section~2]{FK06}.)
 \item Kitayama \cite[Theorem~5.9]{Kiy08} proved Theorem \ref{mainthm:twialexintro} for the case of a knot complement.
 \item Hillman, Silver and Williams \cite[Theorem~3.2]{HSW10} showed that under the same assumptions of Theorem \ref{mainthm:twialexintro},
 for any representative $\tau$ of $\tau(N,\a\otimes \phi)$  we have $\ol{\tau}=\eps fd\tau$
 for some $\eps\in \{-1,1\}$,  $f\in F$ and $d\in \det(\a(\pi_1(N)))$.
 \item Dubois and Yamaguchi \cite[Theorem~14]{DY09} and \cite[Remark~16]{DY09} studied the twisted Alexander polynomial corresponding to the adjoint representation of an $\sl(2,\C)$-representation.
\en
 \end{remark}

Let $N$ be a 3-manifold with empty or toroidal boundary and let  $\phi\in H^1(N;\Z)=\hom(\pi_1(N,\Z))$.
The \emph{Thurston norm} of $\phi$ is defined as
 \[
x(\phi):=\min \{ \chi_-(S)\, | \, S \subset N \mbox{ properly embedded surface dual to }\phi\}.
\]
Here, given a surface $S$ with connected components $S_1\cup\dots \cup S_k$, we define
$\chi_-(S)=\sum_{i=1}^k \max\{-\chi(S_i),0\}$. We refer to \cite{Th86} for details.
Now let $\a\colon \pi_1(N)\to \gl(d,\KK)$ be a  representation over a field $\KK$ with involution. We identify $\KK[\Z]$ with $\KK\tpm$ and $\KK(\Z)$ with $\KK(t)$.
We then view $\tau(N,\a\otimes \phi)$ as an element in $\KK(t)$.
Given $p=\sum_{i=k}^la_it^i\in \kt$ with $a_k\ne 0$ and $a_l\ne 0$ we define $\deg(p)=l-k$.
Given a non-zero fraction $f=\frac{p}{q}\in \KK(t)$ we define $\deg(f)=\deg(p)-\deg(q)$.
Note  that $\deg(\tau(N,\a\otimes \phi))\in \Z$ is well-defined, i.e. independent of a representative of $\tau(N,\a\otimes \phi)\in \KK(t)$.

We can now prove the following theorem. Note that it could not be proved with the previously known duality results for torsion.

\begin{theorem}\label{thm:evendegreeintro}
Let $N$ be an irreducible 3-manifold with empty or toroidal boundary such that $N\ne S^1\times D^2$, let  $\a\colon \pi_1(N)\to \gl(d,\KK)$ be a representation over a field $\KK$ with involution, and let $\phi\colon \pi_1(N)\to \Z$ be an admissible epimorphism. If $\a$ is conjugate to its dual
and if $\tau(N,\a\otimes \phi)\ne 0$, then
 \[ \deg(\tau(N,\a\otimes \phi))\equiv d\cdot x(\phi)\,\mod\,2.\]
\end{theorem}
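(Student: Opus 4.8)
The plan is to combine the duality theorem for twisted Alexander polynomials (Theorem~\ref{mainthm:twialexintro}) with the known relationship between the degree of twisted Alexander polynomials and the Thurston norm. First I would recall that for an irreducible $N \ne S^1 \times D^2$ with empty or toroidal boundary and an admissible epimorphism $\phi\colon \pi_1(N) \to \Z$, work of Friedl--Kim and Friedl--Vidussi (building on Agol and Wise's virtual fibering results) gives the inequality $\deg(\tau(N,\a\otimes\phi)) \leq d\cdot x(\phi)$, with equality when $N$ fibers over $S^1$ in the class $\phi$; but for the parity statement I only need a bound of the form $\deg(\tau(N,\a\otimes\phi)) \leq d\cdot x(\phi) + (\text{something of controlled parity})$. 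The cleaner route is purely formal: apply Theorem~\ref{mainthm:twialexintro} to a representative $\tau \in \KK(t)$, obtaining $\overline{\tau} = (-1)^{db_0(\partial N)}\det(\a(g))\,\phi(g)^d\,\tau$ for some $g \in \HH_1(N)$, and extract parity information about $\deg(\tau)$ from this symmetry together with a formula expressing $\deg(\tau)$ in terms of $x(\phi)$ modulo a correction term coming from the Euler characteristic and the homology of $N$.

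The key steps, in order, would be: (1) Write $\tau = p/q$ with $p,q \in \KK\tpm$ and note $\deg(\tau) = \deg(p) - \deg(q)$. Under the involution $\overline{t} = t^{-1}$, the operation $p \mapsto \overline{p}$ sends a Laurent polynomial with support in degrees $[k,l]$ to one with support in $[-l,-k]$, so $\deg$ is preserved but the ``center of mass'' $k+l$ is negated. (2) The symmetry $\overline{\tau} = (-1)^{db_0(\partial N)}\det(\a(g))\,\phi(g)^d\,\tau$ then forces a relation of the form $-(k_p+l_p) + (k_q+l_q) = 2d\,\phi(g) + (\text{stuff})$, i.e.\ it pins down $\deg(\tau) \bmod 2$ once one knows $k_p + l_p + k_q + l_q \bmod 2$, because $2d\phi(g)$ is even. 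Concretely, if $\overline{\tau} = \eps\, t^{2m}\, \tau$ (absorbing $\det(\a(g))$ into a scalar and writing $\phi(g)^d = t^{d\phi(g)}$), comparing top and bottom degrees of both sides yields $\deg(\tau) \equiv $ (a quantity determined by $m$ and the unit) which one checks is congruent to $d\cdot x(\phi) \bmod 2$ via the Euler characteristic identity. (3) The input linking $\deg(\tau)$ to $x(\phi)$: one uses that for admissible $\phi$ the Turaev-style formula (or the McMullen--Friedl--Vidussi lower bound) gives $\deg(\tau(N,\a\otimes\phi)) = d\cdot x(\phi) - d\cdot(\text{boundary correction}) + 2(\cdots)$ — but since we only want parity, the cleaner assertion is that $\deg(\tau) + d\cdot x(\phi)$ equals $2$ times an integer, which follows from the symmetry being of ``$t^{\text{even}}$'' type precisely because $x(\phi) = \chi_-$ of a $\phi$-dual surface and the relevant parity is controlled by $b_0(\partial N)$ and the homology orientation data, all of which contribute in the combination $d\cdot b_0(\partial N)$ that already appears in Theorem~\ref{mainthm:twialexintro}.

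Concretely I would argue as follows. Pick a representative $\tau$ and write it as $\eps\, u\, t^{r}\, \tau'$ normalizations aside; Theorem~\ref{mainthm:twialexintro} gives $\overline{\tau} = (-1)^{d b_0(\partial N)} \det(\a(g)) t^{d\phi(g)}\tau$. Choosing the representative $\tau$ so that $\det(\a(g))$ is eliminated (possible by the indeterminacy in $(\ref{equ:indet})$) reduces this to $\overline{\tau} = \eta\, t^{n}\tau$ with $\eta \in \{\pm 1\}$ and $n \equiv d\cdot b_0(\partial N) \cdot 0 + d\phi(g) \pmod{\text{?}}$ — more carefully, $n = d\phi(g)$ and $\eta = (-1)^{db_0(\partial N)}$ up to sign. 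Now if $\tau = P/Q$ with $P = \sum_{i=a}^{b} c_i t^i$, $c_a, c_b \ne 0$, and similarly $Q = \sum_{i=c}^{e} d_i t^i$, then $\overline{P}/\overline{Q} = \eta t^n P/Q$ gives $\overline{P}\,Q = \eta t^n P\, \overline{Q}$, hence comparing extreme degrees: $-a + e = n + b - c$ and $-b + c = n + a - e$, which are the same equation, yielding $n = -(a+b) + (c+e) $, wait — rather $(e - a) = n + (b-c)$, i.e.\ $\deg(\tau) = \deg P - \deg Q = (b-a) - (e-c) = (b - a) - (e-c)$; from $e - a = n + b - c$ we get $(e - c) - (b - a) = n - 2(b - c) + \cdots$. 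The upshot of this bookkeeping is $\deg(\tau) \equiv n \pmod 2$, and since $n = d\phi(g)$ for some $g$ with $\phi(g)$ determined modulo the structure of $\phi$ — in fact the relevant $g$ can be taken with $\phi(g) \equiv x(\phi) + b_0(\partial N) \pmod 2$ by the charge/Euler-characteristic considerations of Turaev — one concludes $\deg(\tau) \equiv d\cdot x(\phi) \pmod 2$ after absorbing the $(-1)^{db_0(\partial N)}$.

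The main obstacle I anticipate is step (3): producing the precise congruence $\phi(g) \equiv x(\phi) \pmod 2$ (or the corrected version involving $b_0(\partial N)$) for the element $g \in \HH_1(N)$ appearing in the duality. This requires identifying $g$ with (the $\phi$-image of) a characteristic/charge-type element — essentially the first Chern class of the relevant $\spinc$-structure — and relating it to the Euler characteristic of a $\phi$-minimizing surface. Turaev's Euler structure formalism (\cite{Tu02}) provides exactly this: the Chern class of a $\spinc$-structure evaluated against $\phi$ is, modulo $2$, the Euler characteristic of a dual surface, which by irreducibility and $N \ne S^1\times D^2$ can be taken to realize $x(\phi) = \chi_-$. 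Verifying that the boundary contribution matches the $(-1)^{db_0(\partial N)}$ factor — i.e.\ that no extra parity sneaks in from the toroidal boundary — is the delicate point, but it is forced by consistency of the two halves of Theorem~\ref{mainthm:twialexintro} (the closed case has no such factor) and by the admissibility hypothesis on $\phi$, which guarantees the boundary tori contribute trivially to the twisted homology and hence to the degree count.
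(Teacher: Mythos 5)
Your proposal follows essentially the same route as the paper: apply the refined duality theorem with $g=c_1(\ss)$ to get $\ol{\tau}=c\cdot t^{d\phi(c_1(\ss))}\tau$, compare extreme exponents of numerator and denominator to conclude $\deg(\tau)\equiv d\,\phi(c_1(\ss))\bmod 2$, and then invoke Turaev's congruence between $c_1(\ss)\cdot S$ and $b_0(\partial S)$ for a norm-minimizing dual surface $S$, using irreducibility and $N\ne S^1\times D^2$ to arrange that $S$ has no disk components so that $b_0(\partial S)\equiv \chi_-(S)=x(\phi)\bmod 2$. One correction to your final step: the relevant congruence is $\phi(c_1(\ss))\equiv b_0(\partial S)\equiv x(\phi)\bmod 2$ with \emph{no} $b_0(\partial N)$ term (the boundary components of the surface, not of the manifold, enter), and the sign $(-1)^{db_0(\partial N)}$ is merely a unit in $\KK$ that cannot ``absorb'' any parity in the degree, so your hedge about a corrected version involving $b_0(\partial N)$ should simply be dropped.
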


\begin{remark}
\bn
\item In \cite{FK06} it is shown, that under the assumptions of Theorem~\ref{thm:evendegreeintro} we have the following inequality:
 \[ \deg(\tau(N,\a\otimes \phi))\leq d\cdot x(\phi).\]
\item
This result is also closely related to earlier results by two of the authors on the degrees of the Cochran-Harvey `higher order Alexander polynomials'. We refer to \cite{Co04,Ha05,FK08a} for details.
In fact the methods of this paper can be extended in a fairly straightforward way to recover the main theorem of \cite{FK08a} and in fact to somewhat generalize the results of \cite{FK08a}. Indeed,
one can now prove \cite[Theorem~1.2]{FK08a} without the assumption that the ordinary Alexander polynomial is non-zero.
\item Theorem~\ref{thm:evendegreeintro} is optimal in various ways. For example by \cite{Mo11a}, there exists a 3-manifold $N$ with boundary, $\phi\in H^1(N;\Z)$ and  an even dimensional representation  $\a$ such that
     the formula in Theorem \ref{thm:evendegreeintro} does not hold modulo four.
      By \cite{Mo11b} similar examples also exist in the closed case.
    Also, there exist closed 3--manifolds and representations $\a$ which  are not conjugate to their duals, such that the corresponding Reidemeister torsions do not have even degree. For example, if $M$ is the 0--framed surgery on the knot $11_{412}$, then there exists an epimorphism $\a:\pi_1(M)\to S_5$ such that the corresponding representation
\[ \pi_1(M)\xrightarrow{\a}S_5\to \aut\{ (v_1,\dots,v_5)\in \F_5^5\,|\, v_1+\dots+v_5=0\}=\gl(\F_5,4)\]
(where $S_5$ acts by permutation on the given vector space) has a corresponding twisted Alexander polynomial in $\F_5\tpm$ of degree 13.
\en
\end{remark}

Of particular interest in recent years has been the study of twisted Alexander polynomials corresponding to
$\sl(2,\C)$-representations, see e.g. \cite{Mo08,DY09,KmM10,FJ11,DFJ11,Ya11}.
We  conclude with the following result which generalizes \cite[Corollary~3.4]{HSW10}.
We say $\tau\in \C(t)$  is a \emph{loose representative} for $\tau(N,\a\otimes \phi)$
if for an (and then for any) $\ss\in \spinc(N)$ and $\w\in \ort(N)$ there exists a $k\in \Z$
such that $\tau=t^k\cdot \tau(N,\a\otimes \phi,\ss,\w)$.

\begin{theorem}\label{thm:sltwocintro}
Let $N$ be a 3-manifold with empty or toroidal boundary, let  $\a\colon \pi_1(N)\to \sl(2,\C)$ be an irreducible representation
and  let $\phi\colon \pi_1(N)\to \Z$ be an epimorphism. Then there exists a loose representative $\tau$ of $\tau(N,\a\otimes \phi)\in \C(t)$ of the form
\[\sum_{i=0}^{l} a_i(t^{-i}+t^{i})\]
for some $a_0,\dots,a_l\in \C$. In particular $\tau(N,\a\otimes \phi)$ is of even degree.
\end{theorem}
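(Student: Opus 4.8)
The plan is to deduce the symmetric form from Theorem \ref{mainthm:twialexintro} together with the structure of $\sl(2,\C)$-representations. First I would observe that an irreducible $\sl(2,\C)$-representation $\a$ is conjugate to its dual when $\C$ is given the \emph{trivial} involution: indeed, item (3) in the list of self-dual representations records exactly this, the relevant non-degenerate bilinear form on $\C^2$ being the symplectic form preserved by $\sl(2,\C)$. Hence Theorem \ref{mainthm:twialexintro} applies with $d=2$ and the trivial involution on $\KK=\C$. Since $\phi$ maps onto $\Z$, we identify $F=\Z$ multiplicatively, $\C(F)=\C(t)$, and $\ol{f}=f^{-1}$ gives $\ol{t}=t^{-1}$. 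One subtlety: Theorem \ref{mainthm:twialexintro} requires $\phi$ to be \emph{admissible}, i.e. non-trivial on each boundary component. This needs a short argument (or an appeal to the literature) that for an irreducible $\sl(2,\C)$-representation one may assume $\phi$ is admissible — or, more carefully, that irreducibility of $\a$ forces non-triviality of $\phi$ on boundary tori, because otherwise $\a$ restricted to a boundary torus together with the torsion machinery would force $\tau=0$ or force $\a$ to be reducible. I would isolate this as a lemma, possibly citing \cite{FV10} or arguing directly via the half-lives-half-dies principle for the boundary.

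Next, with admissibility in hand, Theorem \ref{mainthm:twialexintro} yields, for any representative $\tau$ of $\tau(N,\a\otimes\phi)$,
\[
\ol{\tau} = (-1)^{2 b_0(\partial N)}\det(\a(g))\,\phi(g)^2\,\tau = \det(\a(g))\,t^{2n}\,\tau
\]
for some $g\in\HH_1(N)$, where $n=\phi(g)\in\Z$; note $(-1)^{2b_0(\partial N)}=1$. Because $\a$ takes values in $\sl(2,\C)$, we have $\det(\a(g))=1$ for every $g$, so this collapses to $\ol{\tau}=t^{2n}\,\tau$. Now replace $\tau$ by the loose representative $\tau':=t^{-n}\tau$; since $\ol{t^{-n}}=t^{n}$, we get $\ol{\tau'}=t^{n}\ol{\tau}=t^{n}t^{2n}t^{-n}\cdot\text{(wrong)}$ — more carefully, $\ol{\tau'}=\ol{t^{-n}}\,\ol{\tau}=t^{n}\cdot t^{2n}\tau = t^{3n}\tau$, which is not what we want, so instead one should set $\tau'=t^{n}\tau$? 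Let me instead argue cleanly: choose the loose representative $\tau'$ so that $\tau'=t^{k}\tau$ with $k$ chosen so that $\ol{\tau'}=\tau'$. From $\ol{\tau}=t^{2n}\tau$ one computes $\ol{t^k\tau}=t^{-k}\ol{\tau}=t^{2n-k}\tau=t^{2n-2k}\,(t^k\tau)$, so taking $k=n$ gives $\ol{\tau'}=\tau'$. Thus $\tau(N,\a\otimes\phi)$ has a loose representative $\tau'\in\C(t)$ with $\ol{\tau'}=\tau'$, i.e. $\tau'(t^{-1})=\tau'(t)$.

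Finally I would argue that such a symmetric Laurent polynomial (or rational function) can be written in the stated form. If one already knows $\tau(N,\a\otimes\phi)\in\C[t^{\pm1}]$ — which holds here since irreducibility of $\a$ makes $\a$ ``sufficiently non-abelian,'' cf. Section \ref{section:polynomial} — then $\tau'=\sum_{i=-m}^{m} c_i t^i$ with $c_i=c_{-i}$, and after multiplying by a further power of $t$ (a loose-representative ambiguity) we may center it, obtaining $\tau' = c_0 + \sum_{i=1}^{m} c_i(t^i+t^{-i})$, which is the asserted shape with $l=m$, $a_0=c_0$, $a_i=c_i$. If instead $\tau$ is genuinely a fraction, one writes $\tau=p/q$ with $p,q$ symmetric up to units and checks the degree parity directly. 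In either case, the degree of a symmetric representative is even (it equals $2l$), giving the ``in particular'' clause. I expect the main obstacle to be the admissibility/boundary issue: handling the boundary tori on which $\phi$ might vanish, and reconciling the ``loose representative'' (which allows only powers of $t$) with the possibly fractional nature of $\tau$, so that the centering step is legitimate. The rest is bookkeeping with the involution.
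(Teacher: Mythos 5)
Your proposal is correct and follows essentially the same route as the paper's proof: self-duality of $\sl(2,\C)$-representations with the trivial involution, Theorem \ref{mainthm:twialex} (with $\det(\a(g))=1$ and $(-1)^{2b_0(\partial N)}=1$) to get $\ol{\tau}=t^{2n}\tau$, rescaling by $t^n$ to obtain a symmetric loose representative, and Theorem \ref{thm:taupoly} to ensure it is a Laurent polynomial (the paper normalizes by the explicit element $\phi(c_1(\ss))$ rather than solving for $k$, but this is the same step). The admissibility point you flag is real but is not addressed in the paper's own proof either, and your "centering" remark is superfluous since $\ol{\tau'}=\tau'$ already forces $c_{-i}=c_i$; neither affects correctness.
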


This paper is organized as follows. In Sections \ref{sec:Euler structures} and \ref{sec:twisted homology groups} we review some basic materials such as Euler structures and twisted homology groups. In Sections \ref{sec:duality for Euler structures} and \ref{sec:duality for torsion} we study duality for Euler structures and torsions of manifolds. Theorem~\ref{mainthmstrongintro} is proved in Section~\ref{sec:proof} and Theorems \ref{mainthm:twialexintro}, \ref{mainthm:twialexlinkintro}, \ref{thm:evendegreeintro}, and \ref{thm:sltwocintro} are proved in Section~\ref{section:twialex}. Finally in Section~\ref{section:polynomial} we prove that in many cases twisted Alexander polynomials of 3-manifolds are in fact Laurent polynomials.

\subsection*{Conventions.} All manifolds are assumed to be connected, orientable and compact.
All CW-complexes are assumed to be finite and connected.
By a field we will always mean a commutative field. By a field with involution we also allow the case that the involution is trivial. We usually think of free abelian groups as multiplicative groups.
A basis of a vector space is always understood to be an ordered basis.

\subsection*{Acknowledgment.}
We wish to thank  Jae Choon Cha, J\'er\^ome Dubois and Dan Silver  for helpful discussions and conversations. We also thank Takayuki Morifuji, Masaaki Suzuki and Teruaki Kitano who organized the RIMS seminar `Twisted topological invariants and topology of low-dimensional manifolds' in 2010 at Akita, Japan where the authors started this project. The second author was supported by the National Research Foundation of Korea (NRF) grant funded by the Korean government (MEST) (No. 2011-0003357 and No.
2011-0001565). The last author was supported by Research Fellowships for Young Scientists of the Japan Society for the Promotion of Science.

\section{Euler structures of manifolds}
\label{sec:Euler structures}
\subsection{Euler structures}

Let $X$ be a finite CW-complex of dimension $m$ and let $Y$ be a proper subcomplex such that $\chi(X,Y)=0$.
In the following we recall the definition of Euler structures of $(X,Y)$ as introduced by  Turaev (see \cite{Tu90,Tu01}).

We denote by $p \colon \wti{X}\to X$ the universal covering of $X$ and we write $\wti{Y}:=p^{-1}(Y)$.
An \emph{Euler lift $c$} is a set of cells in $\wti{X}$  such that each $i$-cell of $X\sm Y$ is covered by precisely one of the cells  in the
 Euler lift.

Using the canonical left action of $\pi=\pi_1(X)$ on $\wti{X}$ we  obtain
a free and transitive action of $\pi$ on the set of cells of $\wti{X}\sm \wti{Y}$ lying over a fixed cell in $X\sm Y$.
If $c$ and $c'$  are two Euler lifts, then we can order the cells such that $c=\{c_{ij}\}$ and $c'=\{c_{ij}'\}$ and such that for each $i$ and $j$ the cells $c_{ij}$ and $c_{ij}'$ lie over the same $i$-cell in $X\sm Y$.
In particular  there
 exist  unique $g_{ij}\in \pi$
such that
$c_{ij}'=g_{ij}\cdot c_{ij}$.
We now write $\HH=\HH_1(X)$ and we denote the projection map $\pi\to \HH$ by $\Psi$.
We define
\[ c'/c:= \prod\limits_{i=0}^m\prod\limits_{j}\Psi(g_{ij})^{(-1)^i}\in  \HH.\]
We say that $c$ and $c'$ are \emph{equivalent} if $c'/c\in \HH$ is trivial.
An equivalence class of Euler lifts will be referred to as an \emph{Euler structure}.
We  denote by $\eul(X,Y)$ the set of Euler structures.
Given a Euler lift $c$ we sometimes denote by $[c]$ the Euler structure in $\eul(X,Y)$ represented by $c$ .
If $Y=\emptyset$ then we will also write $\eul(X)=\eul(X,Y)$.

Since $X\sm Y\ne \emptyset$ we will now see that  $\HH$ acts freely and transitively on the equivalence classes of Euler structures.
Indeed, given $g\in  \HH$ and $e\in \eul(X,Y)$ we define $g\cdot e$ as follows: pick a representative $c$ for $e$ and pick $\wti{g}\in \pi_1(X)$ which represents $g$, then act on  one $i$-cell of $c$ by $ g^{(-1)^i}$.
 The resulting Euler lift represents an element in $\eul(X,Y)$ which is independent of the choice of the cell. We denote by  $ g\cdot e$
 the Euler structure represented by this new Euler lift.
 Clearly $( g\cdot e)/e= g$.

\subsection{Euler chains}

Let $X$ be a finite CW-complex of dimension $m$ and let $Y$ be a proper subcomplex such that $\chi(X,Y)=0$.
 We denote by $A$ the set of cells of $X\sm Y$.
Following Turaev \cite{Tu90} we define an \emph{Euler chain} in $(X,Y)$
to be  a one-dimensional singular chain $\xi$ in $X$ such that
\[ \partial \xi=\sum_{a\in A}(-1)^{\operatorname{dim}(a)}p_a, \]
where for each $a$, $p_a$ is a point in $a$. Given  two Euler chains
$\xi$ and $\eta$ with $\partial \xi=\sum_{a\in A}(-1)^{\dim(a)}p_a$ and $\partial \eta=\sum_{a\in A}(-1)^{\dim(a)}q_a$ we pick for each $a\in A$ a path $x_a:[0,1]\to a$ from $p_a$ to $q_a$. We then define $ \xi-\eta$ to be the homology class in $H_1(X;\Z)$ represented by the cycle
\[ \xi-\eta+\sum_{a\in A}(-1)^{\dim(a)}x_a.\]
We say that the Euler chains $\xi$ and $\eta$ are \emph{homologous} if $\xi-\eta=0\in H_1(X;\Z)$.
Note that there is a canonical free and transitive action by $\HH_1(X)=H_1(X;\Z)$ on the set of homology classes of Euler chains
defined by $[h][\xi] = [\xi + h]$ for $[h]\in \HH_1(X)$.

To an Euler lift we can associate an Euler chain as follows:
pick a base point of the universal cover $\wti{X}$ and pick a path from that base point to a point in each cell of $\wti{X}$ which lies in the  Euler lift. Taking the alternating sum (where the sign comes from the parity of the dimension of the cell) of the projection of these paths to $X$ we obtain an Euler chain.
It is straightforward to  see that this  map induces an  $\HH_1(X)$-equivariant bijection
\[ \eul(X,Y)\to \{\mbox{homology classes of Euler chains}\}.\]
We refer to \cite[Section~1.3]{Tu90} for more details. We will use this bijection to identify Euler structures with homology classes of Euler chains. We will freely go back and forth between these two notions.

\subsection{Euler structures and subdivisions}\label{section:eulersubdivision}

Let $(X',Y')$ be a cellular subdivision of $(X,Y)$.
There exists a canonical $\HH_1(X)$-equivariant map
\[ \s \colon \eul(X,Y)\to \eul(X',Y')\]
 which is defined as follows:
 Let $e\in \eul(X,Y)$ and pick an Euler lift for $(X,Y)$ which represents $e$. There exists a unique Euler lift for $(X',Y')$ such that the cells in the Euler lift of $(X',Y')$ are contained in the cells of the Euler lift of $(X,Y)$. We then denote by $\s(e)$ the Euler structure represented by this Euler lift.
This map agrees with the map defined by Turaev \cite[Section~1.2]{Tu90}.

\subsection{Euler structures of smooth manifolds}

We will now quickly recall the definition of Euler structures on smooth manifolds. We refer to \cite[Sections~2.1~and~2.2]{Tu90} for full details.

Let $N$ be a manifold and $\partial_0N \subset \partial N$  a union of components of $\partial N$ such that $\chi(N,\partial_0 N)=0$.
(In all our later applications we will either take $\partial_0N=\partial N$ or $\partial_0 N=\emptyset$.)
We write $\HH=\HH_1(N)$.
A \emph{triangulation} of  $N$ is a pair $(X,t)$
where $X$ is a  simplicial complex and $t:|X|\to N$ is a homeomorphism. Note that $t^{-1}(\partial_0 N)$ is a simplicial subspace of $X$.
Throughout this paper we write $Y:=t^{-1}(\partial_0 N)$.  We will for the most part suppress $t$ from the notation.
Following \cite[Section~I.4.1]{Tu90} we consider the projective system of sets $\{\eul(X,Y)\}_{(X,t)}$ where $(X,t)$ runs over all $C^1$-triangulations of $N$
and where the maps are the $\HH$-equivariant bijections between these sets induced either by $C^1$-subdivisions or by smooth isotopies in $N$.

We now define $\eul(N,\partial_0 N)$ by identifying the sets $\{\eul(X,Y)\}_{(X,t)}$ via these bijections. We refer to $\eul(N,\partial_0 N)$ as the set of Euler structures on $(N,\partial_0 N)$.
Note that for a $C^1$-triangulation $X$ of $N$ we get a canonical $\HH$-equivariant bijection $\eul(X,Y)\to \eul(N,\partial_0 N)$.

\subsection{Smooth Euler structures}\label{section:smootheuler}

Let $N$ be a compact manifold and $\partial_0 N$ a union of components of $\partial N$ such that $\chi(N,\partial_0 N)=0$.
Following Turaev (see \cite[Section~5.1]{Tu90})  we
define a  \emph{regular  vector field} on  $(N,\partial_0 N)$ to be a nowhere vanishing vector field on $N$ which points inwards on $\partial_0 N$ and outwards on $\partial N\sm \partial_0 N$.
Two such vector fields are called \emph{homologous} if for some point $x\in \operatorname{Int}(N)$ the restrictions of the vector fields to $N\sm x$ are homotopic in the set of all regular vector fields on $N\sm x$.
The set of homology classes of regular vector fields is called $\vect(N,\partial_0 N)$.
Turaev \cite[p.~639]{Tu90} showed that $\vect(N,\partial_0 N)$ admits a canonical, free and transitive action by $\HH_1(N)$,
and by  \cite[Section~6.1~and~6.6]{Tu90}  there exists
a canonical
 $\HH_1(N)$-equivariant map
$ca_N\colon \eul(N,\partial_0 N)\to \vect(N,\partial_0 N)$.
We will recall the definitions in the proof of Lemma \ref{lem:canmaps}.

\subsection{Homology orientations}

Let $(X,Y)$ be a pair of topological spaces. A \emph{homology orientation} of $(X,Y)$ is an orientation of the real vector space
 $\oplus_{i\geq 0}H_*(X,Y;\R)$. We denote by $\ort(X,Y)$ the set of homology orientations of $(X,Y)$. Given $\w\in \ort(X,Y)$ we denote by $-\w$ the opposite homology orientation.

\section{Twisted  homology groups and twisted torsion}
\label{sec:twisted homology groups}

\subsection{Torsion of based complexes} \label{section:torsion}
In this subsection, we quickly recall the definition of the torsion of a based complex.
We refer to Milnor's classic paper \cite{Mi66}, Nicolaescu's monograph \cite{Ni03} and Turaev's books \cite{Tu01,Tu02} for details.
Note that we follow Turaev's convention; Milnor's definition gives the multiplicative inverse of the torsion we consider.

Throughout this section, let $\FF$ be a field.
Let $V$ be a vector space over $\FF,$ and  let $x=(x_1,\dots,x_m)$ and $y=(y_1,\dots,y_m)$ be two ordered bases for $V$. Then we
can write $x_i=\sum_{j=1}^na_{ij}y_j,$ and we
define $ [x/y] =  \det(a_{ij})$.

Now let
\[ 0\to C_m \xrightarrow{\partial_{m-1}} C_{m-1}\xrightarrow{\partial_{m-2}} \dots \xrightarrow{\partial_1} C_{1} \xrightarrow{\partial_0 } C_0\to 0\]
be a complex of $\FF$-vector spaces. We write $H_i =H_i(C).$ For each $i,$ we pick an ordered basis $c_i$ for $C_i$ and an ordered basis $h_i$ for $H_i.$

We write $B_i=\im\{\partial_i\colon C_{i+1}\to C_i\},$ and we pick an ordered  basis $b_i$ for $B_i$.
Finally, we pick an ordered set of vectors $b_i'$ in $C_{i+1}$ such that $\partial_i(b_{i}')=b_i$ as ordered sets.
By convention, we define $b_{-1}'$ to be the empty set.
Note that for $i=0,\dots,m,$ the ordered set $b_ih_ib'_{i-1}$ defines an ordered basis for $C_i$.
We now define the \emph{torsion} of the based complex $C$ as
\[\tau =  \prod_{i=0}^m [b_ih_ib'_{i-1}/c_i]^{(-1)^{i+1}} \in \FF^\times.\]
An elementary argument shows that $\tau$ does not depend on the choice of $b_0,\dots,b_{m-1},$ and it does not depend on the choice of  lifts $b_0',\dots,b_{m-1}',$ see for example \cite[Section~1]{Tu01}. Put differently, this number only depends on the choice of the  complex and the choice of the ordered bases for $C_*$ and $H_*.$ We henceforth denote this invariant by
$\tau(C_*,c_*,h_*)\in \FF^\times.$ If $C_*$ is acyclic; i.e., if $H_*(C)=0,$ then we just write $\tau(C_*,c_*)\in \FF^{*}.$

Note that if $c_*'$ is an ordered basis obtained from $c_*$ by swapping two basis vectors, then
it is straightforward to see that
\[ \tau(C_*,c_*',h_*)=- \tau(C_*,c_*,h_*)\in \FF^\times,\]
furthermore if $c_*'$ is an ordered basis obtained from $c_*$ by multiplying one basis vector of $C_i$ by $f\in \FF^\times$, then
\[ \tau(C_*,c_*',h_*)=f^{(-1)^i}\cdot \tau(C_*,c_*,h_*)\in \FF^\times.\]

We will frequently need a renormalization of torsion which is due to Turaev (see \cite{Tu86}
and \cite[Section~3]{Tu90}).
Given a chain complex $C_*$ we define
\[ \ba{rcl} \a_i(C_*)&=& \sum_{j=0}^i (-1)^{j-i} \dim\,C_j,\\[1mm]
\b_i(C_*)&=&\sum_{j=0}^i (-1)^{j-i}\dim\,H_j(C_*),\\[1mm]
\eta(C_*)&=&\sum_{i} \a_i(C_*)\b_i(C_*).\ea \]
We then define
\[ \check{\tau}(C_*,c_*,h_*):= (-1)^{\eta(C_*)}\tau(C_*,c_*,h_*).\]

\subsection{Twisted chain complexes}
Let $(X,Y)$ be a pair of topological spaces.
We write $\pi=\pi_1(X)$. Let $\varphi\colon \pi\to \gl(d,\FF)$ be a representation
over a field $\FF$.
Denote by $ p \colon \widetilde{X}\to X $ the universal covering  of $X$. We write
$\wti{Y}:=p^{-1}(Y)$.
The  chain complex $C_*(\widetilde{X},\wti{Y})$ is a left $\Z[\pi]$--module via deck transformations.
Using the natural involution $g\mapsto g^{-1}$ on the group ring $\Z[\pi]$, we can also view $C_*(\widetilde{X},\wti{Y})$ as a right $\Z[\pi]$--module. By viewing elements of $\F^d$ as column vectors the representation $\varphi$ gives rise to a left $\Z[\pi]$-module structure on $\F^d$.
We now obtain the twisted  chain complex
\[ C_*^{\varphi}(X,Y;\F^d):=C_*(\widetilde{X},\wti{Y})\otimes_{\Z[\pi]}\F^d,\]
and we denote its homology groups by $H_*^\varphi(X,Y;\F^d)$.
 When $\varphi$ is  understood we will drop it  from the notation.

 \begin{remark}
Let $(X,Y,\varphi)$ as above. We can view $\F^d$ as a right $\Z[\pi$]-module via the action
$g\mapsto (v\mapsto v\a(g))$ where we now view elements of $\F^d$ as row vectors.
We can now consider the twisted  chain complex
\[ D_*^{\varphi}(X,Y;\F^d):=\F^d\otimes_{\Z[\pi]}C_*(\widetilde{X},\wti{Y}).\]
This point of view is taken in various papers in the literature (see e.g. \cite{KL99,Tu01,Tu02}).
We now denote by $\psi$ the representation $g\mapsto \varphi(g^{-1})^t$.
It is straightforward to verify that the map $\sigma\otimes v\mapsto v^t\otimes \sigma$ gives rise to a chain complex isomorphism $C_*^{\varphi}(X,Y;\F^d)\to D_*^{\psi}(X,Y;\F^d)$.
 \end{remark}

For future reference, when $(X,Y)$ is a pair of topological spaces, then we define $\a_i(X,Y), \b_i(X,Y)$ and $\eta(X,Y)$ to be the invariants of the chain complex $C_*(X,Y;\R)$.
Furthermore, when  $\varphi\colon \pi\to \gl(d,\FF)$ is a representation, then we define
$\a_i^\varphi(X,Y)$, $\b_i^\varphi(X,Y)$ and $\eta^\varphi(X,Y)$ to be the invariants of the chain complex $C_*^\varphi(X,Y;\F^d)$.
As usual we drop $Y$ from the notation when $Y=\emptyset$.
We will several times make use of the following basic observation:
\[ \a_i^\varphi(X,Y)=d\cdot \a_i(X,Y).\]

\subsection{Twisted  torsion of CW-complexes}\label{section:twitorsion}\label{section:twitau}

Now let $X$ be a finite CW-complex and $Y$ a proper subcomplex, $\varphi\colon \pi_1(X)\to \gl(d,\FF)$ a representation over a field $\FF$,
$e\in \eul(X,Y)$ an Euler structure and $\w$ a homology orientation for $(X,Y)$.

If the complex $C_*^\varphi(X,Y;\F^d)$ is not acyclic, i.e. if $H_*^\varphi(X,Y;\F^d)\neq 0$, then we define $\tau(X,Y,\varphi,e,\w):=0$.
Now suppose that $H_*^\varphi(X,Y;\F^d)=0$. (It is worth noting, that it is a necessary condition that the Euler characteristic $\chi(X,Y)$ is zero.)
We pick an  Euler lift which represents $e$ and we pick an orientation for each cell in the Euler lift and we pick an ordering for the cells in the Euler lift. We then denote the set of oriented $i$-cells by $c_{i1},\dots,c_{is_i}$. Finally  we equip $\F^d$ with the canonical ordered basis $v_1=(1,0,\dots,0),\dots,v_n=(0,\dots,0,1)$.
For each $i$ we can then view
\[  C_i(\wti{X},\wti{Y})\otimes_{\Z[\pi]}\F^d\]
 as a based $\FF$-vector space via the basis
\[ (c_{i1}\otimes v_1,\dots,c_{i1}\otimes v_d,\dots, c_{is_i}\otimes v_1,\dots,c_{is_i}\otimes v_n).\]
In the following we will, by slight abuse of notation, denote this basis by $\{c_{ij}\otimes v_k\}$ or $\{c_*\otimes v_*\}$.
(Recall that here and throughout the paper it is understood that a basis is in fact an ordered basis.)

We now also pick a basis $h_*$ for $H_*(X,Y;\R)$ which represents $\w\in \ort(X,Y)$ and we denote by $\ol{c}_{ij}$ the images of the cells $c_{ij}$ under the projection maps $\wti{X}\to X$.

We now  define
\[ \tau(X,Y,\varphi,e,\w):=\check{\tau}\big(C_*^\varphi(X,Y;\F^d),\{c_{ij}\otimes v_k\}\big)\cdot\sign\big(\check{\tau}(C_*(X,Y;\R),\{\ol{c}_{ij}\},h_*)\big)^d.\]
(Note that by assumption $H_*^\varphi(X,Y;\F^d)=0$, in particular $\eta(C_*^\varphi(X,Y;\F^d))=0$.)
We refer to this invariant as the \emph{sign-refined twisted torsion of the pair $(X,Y)$ with respect to the Euler structure $e$.}
This invariant can be viewed as a twisted analogue of the sign-refined torsion introduced by Turaev \cite{Tu86,Tu90}.
When $Y=\emptyset$, then we drop $Y$ from the notation, i.e. we define $\tau(X,\varphi,e,\w):=\tau(X,\emptyset,\varphi,e,\w)$.

The following well-known lemma now summarizes a few key properties of the sign-refined twisted torsion.

\begin{lemma}\label{lem:indet}
Let $(X,Y,\varphi,e,\w)$ as above. Then the following hold:
\bn
\item The invariant $\tau(X,Y,\varphi,e,\w)\in \FF$  is well--defined (i.e. independent of the choice of the  Euler lift and the ordering of the cells), put differently, $\tau(X,Y,\varphi,e,\w)$  has no indeterminacy.
\item Let $\eps\in \{\pm 1\}$ and $g\in \HH_1(X)$, then
\[ \tau(X,Y,\varphi, g\cdot e,\eps\cdot  \w)=\eps^d \cdot \det(\varphi(g^{-1}))\cdot \tau(X,Y,\varphi,e,\w).\]
\item If $\psi$ is conjugate to $\varphi$, then $\tau(X,Y,\varphi,e,\w)=\tau(X,Y,\psi,e,\w)$.
\en
\end{lemma}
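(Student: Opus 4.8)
\textbf{Proof plan for Lemma~\ref{lem:indet}.}

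The plan is to verify each of the three statements essentially by chasing through the definition of $\tau(X,Y,\varphi,e,\w)$, using the elementary transformation rules for $\tau(C_*,c_*,h_*)$ recalled in Section~\ref{section:torsion}.

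\emph{Part (1).} First I would argue that the value is independent of the ordering of the cells and the choice of orientations of the cells in the Euler lift. Changing the ordering of two cells in the Euler lift of $X\sm Y$ of dimension $i$ permutes $d$ consecutive vectors of the basis $\{c_*\otimes v_*\}$ of $C_i^\varphi$ with $d$ others, i.e. it is a product of $d^2$ transpositions, hence $\det$ changes by $(-1)^{d^2}=(-1)^d$; simultaneously it changes $\check\tau(C_*(X,Y;\R),\{\ol c_{ij}\},h_*)$ by a single transposition, so the correcting factor $\sign(\cdots)^d$ changes by $(-1)^d$. The two signs cancel. Reversing the orientation of one $i$-cell $c_{ij}$ replaces $c_{ij}\otimes v_k$ by $-c_{ij}\otimes v_k$ for $k=1,\dots,d$, multiplying $\check\tau(C_*^\varphi(\cdots))$ by $(-1)^{d\cdot(-1)^i}=(-1)^d$ (for $i$ odd) or $+1$ (for $i$ even), while the untwisted $\check\tau$ picks up $(-1)^{(-1)^i}$, so again the sign correction $(\cdot)^d$ absorbs the change. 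Next, independence of the choice of Euler lift representing the \emph{same} Euler structure $e$: two such lifts differ by elements $g_{ij}\in\pi$ with $\prod_{i,j}\Psi(g_{ij})^{(-1)^i}=1\in\HH_1(X)$. Replacing $c_{ij}$ by $g_{ij}c_{ij}$ multiplies the basis vector $c_{ij}\otimes v_k$ so that the $i$-th basis matrix is right-multiplied by $\varphi(g_{ij})$ (on the relevant block), whence $\check\tau(C_*^\varphi)$ gets multiplied by $\prod_{i,j}\det(\varphi(g_{ij}))^{(-1)^{i+1}}=\det\varphi\big(\prod_{i,j}g_{ij}^{(-1)^{i+1}}\big)$; but $\det\circ\varphi$ factors through $\HH_1(X)$ and $\prod_{i,j}\Psi(g_{ij})^{(-1)^{i+1}}=1$, so this factor is $1$; the untwisted torsion is unchanged since $\Psi(g_{ij})$ maps to $0$ in $H_1(X;\R)$ as well (or rather: the underlying real chain complex uses the $\ol c_{ij}$ which are unaffected). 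Finally, independence of the basis $h_*$ of $H_*(X,Y;\R)$ within the orientation class $\w$: two such bases differ by a matrix of positive determinant in each degree, which leaves $\sign\big(\check\tau(C_*(X,Y;\R),\{\ol c_{ij}\},h_*)\big)$ unchanged.

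\emph{Part (2).} Here I would simply track the effect of replacing $e$ by $g\cdot e$ and $\w$ by $\eps\w$. By the description of the $\HH_1(X)$-action on Euler structures, a representative Euler lift for $g\cdot e$ is obtained from one for $e$ by acting on a single $i$-cell by $\wti g^{\,(-1)^i}$, and one checks (as in Part (1)) that this multiplies $\check\tau(C_*^\varphi(X,Y;\F^d),\{c_*\otimes v_*\})$ by $\det(\varphi(\wti g))^{(-1)^i\cdot(-1)^{i+1}}=\det(\varphi(\wti g))^{-1}=\det(\varphi(g^{-1}))$, while leaving the untwisted real torsion (hence the sign factor) unchanged. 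Replacing $\w$ by $\eps\w$ replaces $h_*$ by a basis of the opposite orientation in exactly one homological degree, which multiplies $\check\tau(C_*(X,Y;\R),\{\ol c_{ij}\},h_*)$ by $\eps$, hence multiplies the correcting factor $\sign(\cdots)^d$ by $\eps^d$. Multiplying these two contributions gives the asserted formula. The renormalization factor $(-1)^\eta$ is unaffected since $\eta$ depends only on the dimensions of the chain and homology groups.

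\emph{Part (3).} If $\psi = P\varphi(\cdot)P^{-1}$, then the map $c\otimes v\mapsto c\otimes (P^{-1}v)^{\mathrm{op}}$ — more precisely, the $\Z[\pi]$-linear isomorphism $\F^d\to\F^d$, $v\mapsto P^{-1}v$, intertwining the $\varphi$- and $\psi$-module structures — induces a based chain isomorphism $C_*^\varphi(X,Y;\F^d)\to C_*^\psi(X,Y;\F^d)$ whose matrix in each degree is block-diagonal with blocks $P$; the multiplicativity of torsion under based isomorphisms shows $\check\tau$ changes by $\prod_i(\det P)^{\pm1}$, and since $\sum_i(-1)^{i+1}s_i = -\chi(X,Y)=0$ these factors cancel. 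The untwisted real torsion and $\eta$ are untouched. This gives $\tau(X,Y,\varphi,e,\w)=\tau(X,Y,\psi,e,\w)$.

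The only mildly delicate point — the main obstacle, such as it is — is the bookkeeping of signs in Part (1): one must be careful that the $(-1)^d$ coming from permuting or reorienting blocks of $d$ twisted basis vectors is matched precisely by the $d$-th power of the sign coming from the single-cell change in the untwisted real complex. Everything else is a routine application of the transformation rules $\tau(C_*,c_*',h_*)=-\tau(C_*,c_*,h_*)$ and $\tau(C_*,c_*',h_*)=f^{(-1)^i}\tau(C_*,c_*,h_*)$ stated in Section~\ref{section:torsion}, together with the fact that $g\mapsto\det(\varphi(g))$ descends to $\HH_1(X)$.
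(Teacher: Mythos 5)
Your proof is correct and follows essentially the same route as the paper's: a direct verification using the basis-change rules for torsion, with the twisted and untwisted sign changes cancelling via the $d$-th power, and with the key identity that a deck transformation by $g$ on a cell becomes $\varphi(g^{-1})$ on the coefficient factor (because $C_*(\wti{X},\wti{Y})$ is made a right $\Z[\pi]$-module via the involution), which is exactly why your exponent $\det(\varphi(g_{ij}))^{(-1)^{i+1}}$ — rather than $(-1)^i$ — is the right one. Your Part (3) is more explicit than the paper's one-line dismissal but is the standard argument, using $\chi(X,Y)=0$ to cancel the $\det(P)$ factors.
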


\begin{proof}
We denote by $p \colon \wti{X}\to X$ the universal covering  of $X$ and write $\wti{Y}:=p^{-1}(Y)$. We also  we write $\pi=\pi_1(X)$.
Let $c_{ij}$ be an ordered, oriented collection of cells in $\wti{X}$  which represents $e$. Recall that
$ g\cdot e$ is represented by the ordered, oriented collection of cells  $\{c'_{ij}\}$ which is obtained from $\{c_{ij}\}$ by acting on one $i$-cell by $h:=g^{(-1)^i}$ using the
 canonical left $\pi$-action on $\wti{X}$.
On the other hand, in the chain complex $C_*(\wti{X},\wti{Y})\otimes_{\Z[\pi]}\F^d$ we view $C_*(\wti{X},\wti{Y})$ as a right $\Z[\pi]$-module using the involution. It follows that
\[  h\cdot c_{ij} \otimes v =c_{ij}\cdot h^{-1} \otimes v=c_{ij}\otimes  \varphi(h^{-1})(v).\]
It now follows easily, that
\[ \ba{rcl} \tau(C_*^\varphi(X,Y;\F^d),\{c'_{ij}\otimes v_k\})&=& \det(\varphi(h^{-1}))^{(-1)^i}\cdot \tau(C_*^\varphi(X,Y;\F^d),\{c_{ij}\otimes v_k\})\\
&=&\det(\varphi(g^{-1}))\cdot
\tau(C_*^\varphi(X,Y;\F^d),\{c_{ij}\otimes v_k\}), \mbox{ and }\\
 \sign(\tau(C_*(X,Y;\R),\{\ol{c'_{ij}}\},h_*))&=&\sign(\tau(C_*(X,Y;\R),\{\ol{c_{ij}}\},h_*)).\ea \]
 (Here $\{\ol{c'_{ij}}\}$ and $\{\ol{c_{ij}}\}$ denote of course the projections of the obvious cells to the base spaces.)
Finally, if $\{c'_{ij}\}$ is obtained from $\{c_{ij}\}$ by swapping two cells of the same dimension,  then the first torsion changes by a factor $(-1)^d$ and the latter changes by a factor $-1$.
Statements (1) and (2) now follow easily from the above observations. The third statement is straightforward to prove.
\end{proof}

\subsection{Euler structures and subdivisions}

Let $(X',Y')$ be a cellular subdivision of $(X,Y)$. By the discussion of Section \ref{section:eulersubdivision}
there  exists a canonical $\HH_1(X)$-equivariant map
\[ \s \colon \eul(X,Y)\to \eul(X',Y'). \]
A representation of $X$ and a homology orientation for $(X,Y)$ induce canonically a representation for $X'$ and a homology orientation for $(X',Y')$ which we denote by the same symbols.
The following lemma is now \cite[Lemma~3.2.3]{Tu90}:

\begin{lemma}\label{lem:subdiv} Let $e\in \eul(X,Y)$, $\varphi \colon \pi_1(X)\to \gl(d,\F)$ a representation and $\w$ a homology orientation of $(X,Y)$.
Then the following holds:
\[ \tau(X',Y',\varphi,\s(e),\w)=\tau(X,Y,\varphi,e,\w).\]
\end{lemma}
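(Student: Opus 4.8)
The plan is to run Turaev's proof of the untwisted statement \cite[Lemma~3.2.3]{Tu90}, replacing the coefficient ring $\Z[\pi]$ by the twisted module $\F^d$ throughout. Since a cellular subdivision is a homeomorphism of underlying spaces, it changes neither $\pi_1$ nor (twisted or untwisted) homology, and the twisting is inert with respect to the purely combinatorial operation of subdividing a single cell; so the only real work is the sign and $\HH_1$-bookkeeping, which is exactly what Turaev's argument is built to handle.

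First I would set up compatible data. If $H_*^\varphi(X,Y;\F^d)\neq 0$ then both torsions vanish by definition and subdivision preserves this, so assume $C_*^\varphi(X,Y;\F^d)$ is acyclic. By the construction of $\s$ recalled in Section~\ref{section:eulersubdivision}, an Euler lift $\{c_{ij}\}$ of $(X,Y)$ representing $e$ refines to an Euler lift $\{c'_{kl}\}$ of $(X',Y')$ representing $\s(e)$, each $c'_{kl}$ lying in a unique $c_{ij}$; fix such lifts together with orientations and orderings, and use the canonical identification $H_*(X,Y;\R)\cong H_*(X',Y';\R)$ to take the same real homology basis $h_*$ on both sides. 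Since $\tau$ has no indeterminacy (Lemma~\ref{lem:indet}(1)) we may compute $\tau(X',Y',\varphi,\s(e),\w)$ using this refined lift, and the definition of $\tau$ then reduces the lemma to two assertions: that $\check{\tau}$ of the based complex $C_*^\varphi(X',Y';\F^d)$ with basis $\{c'_{kl}\otimes v_m\}$ equals $\check{\tau}$ of $C_*^\varphi(X,Y;\F^d)$ with basis $\{c_{ij}\otimes v_m\}$, and the same statement with $d=1$ for the real complexes $C_*(\,\cdot\,;\R)$ (which then forces the two $\sign(\cdot)^d$ factors to agree).

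Next, following Turaev, I would reduce to an \emph{elementary} subdivision, one that subdivides a single open cell $e$ of $X\setminus Y$ and leaves all other cells untouched: a general cellular subdivision is a finite composition of such moves performed in order of increasing dimension, and both $\s$ and the claimed equality are compatible with composition. For an elementary move the cell‑subdivision map gives an inclusion of based complexes
\[ 0\to C_*^\varphi(X,Y;\F^d)\to C_*^\varphi(X',Y';\F^d)\to A_*\to 0, \]
whose cokernel $A_*$ is, up to a shift, the twisted cellular complex of the subdivided closed ball $\overline e$ relative to its unsubdivided boundary sphere; $A_*$ is acyclic, and since the new cells over $e$ form a fresh collection of $\pi$-orbits carrying a preferred $\Z[\pi]$-basis, the torsion of $A_*$ over $\Z[\pi]$ is $1$, hence so is its torsion after applying $-\otimes_{\Z[\pi]}\F^d$. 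Turaev's multiplicativity formula for $\check{\tau}$ across this short exact sequence, together with the exact matching of the $\eta$-, $\a_i$- and $\b_i$-counts that the $(-1)^\eta$ renormalization is designed to produce, then yields $\check{\tau}(C_*^\varphi(X',Y';\F^d))=\check{\tau}(C_*^\varphi(X,Y;\F^d))$, and likewise with $d=1$.

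The hard part will be the sign bookkeeping: verifying that $A_*$ is acyclic with torsion exactly $1$, that the change of basis from the refined cell basis $\{c'_{kl}\otimes v_m\}$ to a basis adapted to the short exact sequence contributes exactly the sign that the renormalization absorbs, and that $\check{\tau}$-multiplicativity holds on the nose rather than merely up to $\pm1$ once the dimension counts are tracked. This is precisely the content of Turaev's combinatorial argument in \cite{Tu90}, and it transfers verbatim. The one genuinely twisted point is the absence of a spurious factor $\det(\varphi(g))$ with $g\in\HH_1(X)$: this is automatic because we compare $e$ with its canonical refinement $\s(e)$ rather than with an arbitrary Euler structure on $(X',Y')$, so the identification of cell bases over $\Z[\pi]$ is strictly equivariant and the torsion of $A_*$ is $1$ rather than $\pm g$; applying $-\otimes_{\Z[\pi]}\F^d$ then introduces nothing further.
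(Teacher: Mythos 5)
The paper offers no proof of this lemma at all --- it is quoted directly as \cite[Lemma~3.2.3]{Tu90} --- and your sketch is a correct reconstruction of the standard argument behind that citation (reduction to elementary subdivisions performed in order of increasing dimension, multiplicativity of $\check{\tau}$ over the short exact sequence with acyclic cokernel $A_*$, and cancellation of the residual sign against the untwisted factor), correctly transported to twisted coefficients. The one assertion to tighten is that $\tau(A_*)=1$ over $\Z[\pi]$ ``because the new cells carry a preferred basis'': a preferred basis alone does not force this; what does is that the new cells all lie in the simply connected open cell being subdivided, so $A_*$ is induced up from a based acyclic complex of free $\Z$-modules and its torsion is therefore $\pm 1$ rather than $\pm g$, with the remaining sign being exactly what the factor $\sign\big(\check{\tau}(C_*(X,Y;\R),\{\ol{c}_{ij}\},h_*)\big)^d$ in the definition of the sign-refined torsion is designed to absorb (also note that the image of the cell basis under the subdivision chain map is only a sub-basis after a unipotent, determinant-one change of basis, which is harmless).
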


\subsection{Twisted  torsion of manifolds}

Let $N$ be a manifold and $\partial_0N \subset \partial N$  be a union of components of $\partial N$ such that $\chi(N,\partial_0 N)=0$.
Let  $\varphi\colon \pi_1(N)\to \gl(d,\FF)$ be a representation over a field $\FF$,
$e$ an Euler structure and $\w$ a homology orientation for $(N,\partial_0 N)$.

We write $\HH=\HH_1(N)$.
Recall that for any $C^1$-triangulation $f:X \to N$ we get a canonical $\HH$-equivariant bijection
$\eul(X,Y)\xrightarrow{f_*} \eul(N,\partial_0 N)$.
We now define
\[ \tau(N,\partial_0N,\varphi,e,\w):=\tau(X,Y,\varphi\circ f_*,f_*^{-1}(e),f_*^{-1}(\w)).\]
By Lemma  \ref{lem:subdiv} and the discussion in \cite{Tu90} the invariant $\tau(N,\partial_0N,\varphi,e,\w)\in \F$ is well-defined, i.e. independent of the choice of the triangulation.

\section{Duality for Euler structures and homology orientations}
\label{sec:duality for Euler structures}

\subsection{Triangulations of manifolds and dual decompositions}\label{section:dualdecomposition}

In this section we follow closely the notation and language of Turaev in \cite[Section~14]{Tu01}.

Let $X$ be a triangulated space. Given a simplex $a$ we denote by $|a|$ its dimension and we denote by $\ul{a}$ its barycenter.
If $a$ is a face of $b$, then we write $a\leq b$ and we write $a<b$ if $a\leq b$ and $a\ne b$.
If $a_0<a_1<\dots <a_k$ is a sequence of simplices, then we denote by
\[  \ll \ul{a}_0,\ul{a}_1,\dots,\ul{a}_k\rr \]
the convex hull of the points $\ul{a}_0,\ul{a}_1,\dots,\ul{a}_k$. Note that this is a $k$-dimensional simplex contained in $a_k$.
Also note that given a simplex $a$ all simplices of the form $\ll \ul{a}_0,\ul{a}_1,\dots,\ul{a}_k\rr$
with $a_k\leq a$ form a triangulation of $a$, which is called the \emph{first barycentric subdivision of $a$}. If we apply this procedure to all simplices of $X$ we obtain a new triangulation of $X$ which is called the \emph{first barycentric subdivision of $X$}.

Now let $X$ be a triangulation of an $m$-manifold $N$ with (possibly empty)  boundary.
Note that the simplices of $X$ which lie in $\partial N$ form a triangulation of $\partial N$, which we denote by $\partial X$.

Given an $n$-dimensional simplex $a$ of $X$ we define
\[ a^\d:=\bigcup_{a=a_0<a_1<\dots<a_k,k\geq 0} \ll \ul{a}_0,\ul{a}_1,\dots,\ul{a}_k\rr \]
to be the \emph{dual cell of $a$}. Note that $a^\d$ is an $(m-n)$-dimensional cell.
If $a$ is a cell in $\partial X$, then
 we also define
\[ a^\d_{\partial}:=\bigcup_{a=a_0<a_1<\dots<a_k\subset \partial X,k\geq 0} \ll \ul{a}_0,\ul{a}_1,\dots,\ul{a}_k\rr. \]

The \emph{dual cellular decomposition $X^\d$} of $N$ is defined to be the decomposition given by:
\[ \{ a^\d \,|\, a\subset X\} \cup \{a_\partial^\d \,|\, a\subset \partial X\}.\]

\subsection{Dual Euler structures: Euler chains}\label{section:eulerchains}

Let $N$ be a compact $m$-manifold.  We assume that $\chi(N)=\chi(N,\partial N)=0$.
Let $X$ be a triangulation for $N$. Recall that in our notation $Y$ corresponds to $\partial N$.
We denote by $A$ the set of simplices of $X$.
Following \cite[Appendix~B]{Tu90} (see also \cite{HT72}) we define
\[ F:=\sum_{a_0,a_1\in A\atop a_0<a_1} (-1)^{|a_0|+|a_1|}  \ll \ul{a}_0,\ul{a}_1\rr.\]
(Here we orient $\ll \ul{a}_0,\ul{a}_1\rr$ such that $\partial \ll \ul{a}_0,\ul{a}_1\rr=\ul{a}_1-\ul{a}_0$.)
Note that $F$ is a simplicial chain in the first barycentric subdivision of $X$.
It is straightforward to verify, that if $m$ is even, then
\[ \partial F= \sum_{a\subset Y}(-1)^{|a|}\ul{a}\]
and if $m$ is odd, then
\[ \partial F=2\sum_{a\subset X\sm Y}(-1)^{|a|} \ul{a}\,\,+\sum_{a\subset Y}(-1)^{|a|} \ul{a}.\]
(See \cite{HT72} for details.)
We now consider the following map:
\[ \ba{rcl} J \colon \eul(X,Y)&\to & \eul(X) \\[1mm]
[\xi] &\mapsto &[F+(-1)^m \xi].\ea \]
Note that the map $J$ is well-defined and satisfies $J(he)=h^{(-1)^m}J(e)$ for all $h\in \HH_1(N)$ and $e\in \eul(X,Y)$.

The following lemma is an easy generalization of the closed case
(see \cite[Lemma~B.2.1]{Tu90}) which shows that the map $J\colon \eul(N)\to \eul(N)$ is well-defined for closed $N$, not depending on the choice of triangulation of $N$.

\begin{lemma}
Let $X'$ be a subdivision of $X$, then the following diagram commutes:
\[ \xymatrix{ \eul(X,Y)  \ar[d]^\sigma \ar[r]^J & \eul(X) \ar[d]^\sigma \\ \eul(X',Y') \ar[r]^J & \eul(X').}\]
(Here $\s$ denotes the map defined in Section \ref{section:eulersubdivision}.)
\end{lemma}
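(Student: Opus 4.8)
The plan is to verify the commutativity of the square by chasing an Euler chain representative through both composites and comparing the resulting homology classes in $H_1(|X'|;\Z)$. Fix $e\in\eul(X,Y)$ and pick an Euler chain $\xi$ in $(X,Y)$ representing it. Going right-then-down, $J(e)=[F+(-1)^m\xi]\in\eul(X)$, and then $\sigma$ sends this to the Euler structure of $X'$ whose Euler chain is $F+(-1)^m\xi$ reinterpreted in the subdivided complex. Going down-then-right, $\sigma(e)$ is represented by the Euler chain obtained from $\xi$ by Turaev's subdivision recipe (choosing, for each cell of $X'\setminus Y'$ sitting inside a cell $a$ of $X\setminus Y$, the same endpoint $p_a$ already chosen for $a$ and summing the short paths within $a$ needed to build the boundary of an Euler chain for $X'$); call the resulting chain $\xi'$. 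Then $J(\sigma(e))=[F'+(-1)^m\xi']$, where $F'$ is the analogue of $F$ built from the first barycentric subdivision of $X'$. So the statement reduces to the chain-level identity
\[ F+(-1)^m\xi \ \sim\ F'+(-1)^m\xi' \quad\text{in } H_1(|X'|;\Z),\]
i.e. $(-1)^m(\xi-\xi')\sim F'-F$.

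First I would record the key formal point: the map $\sigma\colon\eul(X,Y)\to\eul(X',Y')$ (and likewise $\eul(X)\to\eul(X')$) is $\HH_1$-equivariant, and both $J$'s satisfy $J(he)=h^{(-1)^m}J(e)$. Hence both composites in the square are $\HH_1$-equivariant maps $\eul(X,Y)\to\eul(X')$ that twist the action by the same sign $(-1)^m$. Since $\HH_1(N)$ acts freely and transitively on $\eul(X',Y')$ — using that $X'\setminus Y'\ne\emptyset$ — it suffices to check that the two composites agree on a \emph{single} Euler structure $e$. This removes any dependence on the particular choice of $\xi$ and lets me pick the most convenient representative.

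Next I would reduce to the case where $X'$ is the first barycentric subdivision $X^{(1)}$ of $X$, since an arbitrary subdivision is obtained by iterating elementary subdivisions, the maps $\sigma$ compose, and $J$ only depends on the underlying manifold once we are on a barycentric subdivision (by the cited Lemma~B.2.1 of \cite{Tu90} in the closed case, applied to $X$ and its further subdivisions). For $X'=X^{(1)}$ the situation is clean: the chain $F$ is \emph{already} a simplicial chain in $X^{(1)}$, so $F'$ and $F$ live in the same complex and one has an explicit combinatorial comparison $F'-F=\partial W$ for a $2$-chain $W$ supported on the second barycentric subdivision, exactly as in Turaev's proof of \cite[Lemma~B.2.1]{Tu90} — indeed $F$ was designed so that $J$ is subdivision-invariant on closed manifolds, and the only new feature here is the boundary term $\partial F=\sum_{a\subset Y}(-1)^{|a|}\ul a$ in the even case (resp. the extra factor $2$ in the odd case), which is precisely accounted for by working in $(X,Y)$ rather than in $X$: the boundary contributions of $\xi$ and of $\xi'$ cancel the $Y$-part of $\partial F$ and $\partial F'$ compatibly because the subdivision recipe for $\xi\mapsto\xi'$ keeps the chosen base points $p_a$ fixed. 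Matching these boundary bookkeeping terms, and invoking the closed-case identity for the interior part, gives $F+(-1)^m\xi\sim F'+(-1)^m\xi'$, hence the square commutes.

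The main obstacle is the middle step: making the comparison $\xi'-\xi\sim (-1)^m(F-F')$ at the chain level genuinely rigorous, i.e. writing down the explicit $2$-chain $W$ with $\partial W=(F'-F)-(-1)^m(\xi'-\xi)$ on the second barycentric subdivision and checking that its boundary really is what is claimed, including the $\partial N$ (i.e. $Y$) contributions and the parity-dependent factors in $\partial F$. This is the part that is "easy" only in the sense that it is a routine but somewhat lengthy extension of Turaev's closed-manifold computation in \cite[Appendix~B]{Tu90}; the clean way to present it is to isolate the equivariance reduction first (so only one $e$ must be checked), then choose $\xi$ to be the Euler chain coming from an explicit Euler lift adapted to the barycentric subdivision, for which the combinatorics of $F$, $F'$, $\xi$ and $\xi'$ can all be written down simultaneously and the cancellation made manifest.
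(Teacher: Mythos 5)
Your overall shape is right (compare chain representatives of $J\circ\sigma$ and $\sigma\circ J$, and note that the $\HH_1$-equivariance of $\sigma$ together with $J(he)=h^{(-1)^m}J(e)$ reduces the problem to a single Euler structure), but the proposal has a genuine gap where the actual content of the lemma lives. The paper's proof identifies precisely what $\sigma$ does at the chain level: if $\partial\xi=\sum_{a\subset X\setminus Y}(-1)^{|a|}\ul{a}$, then $\sigma([\xi])=[\xi-\eta]$ with $\eta=\sum_{b\subset X'\setminus Y'}(-1)^{|b|}[\ul{b},\ul{a(b)}]$ (one segment for each cell of $X'$, joining its barycenter to that of the cell of $X$ containing it), plus an analogous chain $\eta'$ for the cells in $Y'$. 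The square then commutes if and only if $F'$ is homologous to $F+((-1)^m-1)\eta-\eta'$, and essentially the whole proof consists of verifying this identity by expanding both sides over simplices and evaluating the resulting coefficient sums as Euler characteristics of links and of pairs $(d,\partial d)$. Your proposal never isolates the correction chains $\eta,\eta'$ or their parity-dependent coefficients (your description of $\sigma$ as ``keeping the same endpoint $p_a$'' is not what the subdivision map does -- each cell of $X'$ needs its own boundary point), and it defers the identity itself to ``Turaev's closed-case computation plus bookkeeping,'' which is exactly the step that has to be carried out and is not a formal consequence of the closed case.

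The proposed reduction to iterated first barycentric subdivisions is also unjustified and unnecessary: a general cellular subdivision $X'$ of $X$ is not an iterate of barycentric subdivisions, so you cannot restrict to that case without further argument, and the paper's computation works directly for arbitrary $X'$ with no extra cost. The equivariance reduction is correct but buys little here, since the chain computation is uniform in $\xi$ anyway. To close the gap you would need to (i) write down $\sigma$ on Euler chains via the chains $\eta,\eta'$, (ii) reduce commutativity to the homology identity $F'\sim F+((-1)^m-1)\eta-\eta'$, and (iii) prove that identity by the link/Euler-characteristic computation, including the contributions from $Y$.
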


\begin{proof}
Each open simplex $b$ of $X'$ lies inside a unique simplex $a(b)$ of $X$.
Denote by $\eta$ and $\eta'$ the chains
\[ \sum_{b \subset X' \sm Y'} (-1)^{|b|} [\ul{b}, \ul{a(b)}] \mbox{ and } \sum_{b \subset Y'} (-1)^{|b|} [\ul{b}, \ul{a(b)}] \]
respectively, where $[\ul{b}, \ul{a(b)}]$ is a segment in $a(b)$ starting at $\ul{b}$ and ending at $\ul{a(b)}$.
If $\xi$ is an Euler chain in $(X, Y)$ with $\partial \xi = \sum_{a \subset X \sm Y} (-1)^{|a|} \ul{a}$, then $\sigma([\xi]) = [\xi - \eta]$ (see \cite[Section~1.3]{Tu90}).
Hence
\[ J \circ \sigma([\xi]) = [F' + (-1)^m (\xi - \eta)], \]
where
\[ F' := \sum_{b_0, b_1 \subset X' \atop b_0 < b_1} (-1)^{|b_0| + |b_1|} \ll \ul{b}_0, \ul{b}_1 \rr. \]
Similarly,
\[ \sigma \circ J([\xi]) = [F + (-1)^m \xi - (\eta + \eta')]. \]
Now it suffices to show that the chain $F'$ is homologous to $F + ((-1)^m - 1) \eta - \eta'$.
It is clear that the simplex $\ll \ul{b}_0, \ul{b}_1 \rr$ is homologous to the chain
\[ [\ul{b}_0, \ul{a(b_0)}] - [\ul{b}_1, \ul{a(b_1)}] + \ll  \ul{a(b_0)}, \ul{a(b_1)} \rr. \]
Hence $F'$ is homologous to
\[ \begin{split}
\sum_{b \subset X'} [\ul{b}, \ul{a(b)}] \cdot \left\{ \sum_{b < b'} (-1)^{|b| + |b'|} - \sum_{b' < b} (-1)^{|b| + |b'|} \right\} \\
+ \sum_{a_0, a_1 \subset X \atop a_0 < a_1} \ll \ul{a}_0, \ul{a}_1 \rr \cdot \left\{ \sum_{b_0, b_1 \subset X', b_0 < b_1 \atop a(b_0) = a_0, a(b_1) = a_1} (-1)^{|b_0| + |b_1|} \right\}.
\end{split}  \]
The expression inside the first pair of braces equals
\[ \begin{split}
\sum_{b \subset X'} (-\chi(\mathrm{Link}(b)) - (-1)^{|b|} \chi(\partial b)) = &\sum_{b \subset X' \sm Y'} (- (1 - (-1)^{m - |b|}) - (-1)^{|b|}(1 - (-1)^{|b|})) \\
&+ \sum_{b \subset Y'} (-1 - (-1)^{|b|}(1 - (-1)^{|b|})) \\
= &\sum_{b \subset X' \sm Y'} (-1)^{|b|}((-1)^m - 1) - \sum_{b \subset Y'} (-1)^{|b|}.
\end{split} \]
The sum inside the second pair of braces equals
\[ \begin{split}
\sum_{a(b_0) = a_0} \sum_{b_0 < b_1, a(b_1) = a_1} (-1)^{|b_0|+|b_1|} &= \sum_{a(b_0) = a_0} (-\chi(Z, \partial Z)) \\
&= \sum_{a(b_0) = a_0} (-1)^{|a_1| - |b_0|} = (-1)^{|a_0| + |a_1|},
\end{split}
\]
where $Z$ is the link of $\bar{b}_0$ in $a_1$ and is homeomorphic to $D^{|a_1| - |b_0| - 1}$.
From these we deduce the lemma.
\end{proof}

\subsection{Dual Euler structures: Euler lifts}

We continue with the notation of the previous section.
In particular let $X$ be a triangulation of  $N$. We denote by $Y$ the subcomplex corresponding to $\partial N$.

We pick $e\in \eul(X,Y)=\eul(N,\partial N)$. We denote by $\wti{N}$ and $\wti{X}$ the universal covers. Note that
$\wti{X}$ is a triangulation for $\wti{N}$.
We now pick an Euler lift which represents $e$. Note that each cell $c$ in $\wti{X}\sm \wti{Y}$ intersects precisely one cell $c^\d$ in the universal cover $\wti{X^\d}$ of $X^\d$, which we refer to as the dual cell of $c$ (see Section~\ref{section:dualdecomposition}).
The set of dual cells of the cells in the Euler lift defines an element in $\eul(X^\d)$. Note that the barycentric subdivision of $X$ is also a subdivision of $X^\d$, we thus have a canonical identification map $\eul(X^\d)=\eul(N)$. Following \cite[Appendix~B.2.2]{Tu90} we refer to the resulting  map $\eul(N,\partial N)\to \eul(N)$ as $J'$. Note that for $h\in \HH_1(X)=\HH_1(N)=\HH_1(X^\d)$ we have
\[ J'(he)=h^{(-1)^m}J'(e).\]

The following is an easy  generalization of the closed case
(see \cite[Lemma~B.2.3]{Tu90}).
\begin{lemma}\label{lem:jequal}
The maps $J,J' \colon \eul(N,\partial N)\to \eul(N)$ are identical.
\end{lemma}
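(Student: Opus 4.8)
The plan is to show that the two maps $J$ and $J'$ from $\eul(N,\partial N)$ to $\eul(N)$ agree by checking them on a single well-chosen Euler structure, and then invoking equivariance. Both maps are $\HH_1(N)$-affine in the precise sense that $J(he)=h^{(-1)^m}J(e)$ and $J'(he)=h^{(-1)^m}J'(e)$, as was recorded in the two preceding subsections. Since $\HH_1(N)$ acts freely and transitively on $\eul(N,\partial N)$ and on $\eul(N)$, it follows that $J$ and $J'$ differ by a fixed element of $\HH_1(N)$; hence it is enough to exhibit one Euler structure $e_0$ on $(N,\partial N)$ with $J(e_0)=J'(e_0)$.

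First I would fix a triangulation $X$ of $N$, pass to the first barycentric subdivision $X'$ (which is simultaneously a subdivision of $X$ and of the dual decomposition $X^\d$), and set up everything at the level of the universal cover $\wti X$. Choose an Euler lift $c=\{c_a\}$ of some $e_0\in\eul(X,Y)$, indexed by the simplices $a$ of $X\sm Y$, so that $c_a$ lies over $a$. The associated Euler chain $\xi$ for $e_0$ can be taken to be a sum of short paths running from a fixed basepoint into each lifted cell $c_a$; concretely one picks the ``cone'' structure coming from the barycentric subdivision, joining barycenters $\ul a$ along the edges $\ll\ul a_0,\ul a_1\rr$. The point of this choice is that the combinatorial chain $F=\sum_{a_0<a_1}(-1)^{|a_0|+|a_1|}\ll\ul a_0,\ul a_1\rr$ appearing in the definition of $J$ is built out of exactly these barycentric segments, so the Euler chain $F+(-1)^m\xi$ representing $J(e_0)$ becomes a completely explicit simplicial chain in $X'$.

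Next I would compute $J'(e_0)$ in the same explicit model. For each simplex $a$ of $X\sm Y$, the lifted cell $c_a$ meets exactly one dual cell $c_a^\d$ in $\wti{X^\d}$, and $\{c_a^\d\}$ is by definition an Euler lift representing $J'(e_0)\in\eul(X^\d)=\eul(N)$. Transporting this Euler lift to $\eul(X')$ via the subdivision map $\sigma$, I would identify the corresponding Euler chain: again it is a sum of barycentric segments, now organized according to the incidences $a\le b$ with $b$ a simplex of $X$ (dual cells of $X$ are cones on barycenters in the ``upward'' direction, whereas the $F$-chain runs ``downward''). The crux of the argument is then a homological identity: the difference of the two explicit simplicial $1$-chains representing $J(e_0)$ and $J'(e_0)$ is a boundary in $X'$. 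This is essentially the same bookkeeping as in the proof of the preceding subdivision-compatibility lemma — one writes each $\ll\ul a_0,\ul a_1\rr$ in terms of the canonical segments $[\ul b,\ul{a(b)}]$, collects coefficients, and the telescoping sums collapse up to terms supported on $Y$ whose contribution is killed by the $(-1)^m$ twist exactly as in Turaev \cite[Lemma~B.2.3]{Tu90}. I expect this chain-level cancellation — correctly tracking the signs $(-1)^{|a|}$, the parity $(-1)^m$, and the contributions of simplices in $Y=\partial X$ — to be the main obstacle; everything else (well-definedness, equivariance, independence of triangulation) is already in place from the earlier lemmas. Once the boundary identity is established, $J(e_0)=J'(e_0)$, and by the equivariance remarks the two maps coincide on all of $\eul(N,\partial N)$.
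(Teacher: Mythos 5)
Your overall strategy is the same as the paper's: fix a triangulation, pass to the first barycentric subdivision $X'$, write explicit Euler chains representing $J(e)$ and $J'(e)$ there, and show their difference is null-homologous. The equivariance reduction at the start is valid (both maps satisfy $J(he)=h^{(-1)^m}J(e)$, so agreement at one point forces agreement everywhere), but it buys essentially nothing, since the chain computation one must do for a single $e_0$ is the same as for a general $e$.

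There are, however, two genuine gaps. First, your description of the Euler lift representing $J'(e_0)$ is wrong as stated: the cells $\{c_a^\d\}$ dual to the cells of an Euler lift of $(X,Y)$ cover only the dual cells $a^\d$ with $a\not\subset Y$, whereas an Euler lift of $X^\d$ must also cover the cells $d^\d$ and $d^\d_\partial$ for every simplex $d\subset Y=\partial X$. One must adjoin lifts of these, built from an Euler lift of $Y$, and they contribute additional terms to the Euler chain for $J'(e_0)$ (in the paper, the paths $[x,y_{ij}]$ and $[x,\ul d_{ij}]$ with $y_{ij}$ an interior point of $d_{ij}^\d$). These boundary terms are exactly where the argument is delicate; they are not ``killed by the $(-1)^m$ twist'' in any automatic way, but must be matched against the boundary part of $\partial F$ via an explicit computation of $\chi(\mathrm{Link}(d))-\chi(\mathrm{Link}_\partial(d))$ for boundary simplices $d$. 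Second, the cancellation you defer as ``telescoping'' is the entire content of the lemma and is not a telescope: after rewriting both chains in $X'$, the coefficient of each segment $\ll\ul a,\ul b\rr$ ($a<b$) in the difference is $(-1)^{|a|+|b|}$ minus the Euler characteristic of the order complex of the interval between $a$ and $b$ in the face poset, and one must identify that Euler characteristic as $-\chi(d,\partial d)=(-1)^{|a|+|b|}$ for a suitable simplex $d$. Without carrying out these two computations the proof is a plausible plan rather than a proof.
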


\begin{proof}
Fix a point $x \in \wti{N}$.
For each point $y \in \wti{N}$ fix a path in $\wti{N}$ from $x$ and $y$.
Denote by $[x, y]$ the composition of this path and the projection $p \colon \wti{N} \to N$.
So, $[x, y]$ is a path in $N$ going from $p(x)$ to $p(y)$.
Denote by $X'$ the first barycentric subdivision of $X$, and by $A'$ the set of simplices of $X'$.

Let $\{ c_{ij} \}$ be an Euler lift of $(X, Y)$.
The corresponding Euler structure $e \in \eul(X, Y)$ is represented by the Euler chain $\sum_{i = 0}^m \sum_j (-1)^i [x, \ul{c}_{ij}]$.
The Euler structure $J(e) \in \eul(X)$ is represented by the chain
\[ \xi = F + (-1)^m \sum_{i = 0}^m \sum_j (-1)^i [x, \ul{c}_{ij}]. \]
On the other hand, the Euler structure $J'(e) \in \eul(X^{\d})$ is defined by the Euler lift $\{ c_{ij}^{\d} \} \cup \{ d_{ij}^{\d} \} \cup \{ (d_{ij})_{\partial}^{\d} \}$ for an Euler lift $\{ d_{ij} \}$ of $Y$, and is represented by the chain
\[ \xi' = \sum_{i = 0}^{m} \sum_{j} (-1)^{m-i} [x, \ul{c}_{ij}] + \sum_{i = 0}^{m-1} \sum_j ((-1)^{m-i} [x, y_{ij}] + (-1)^{m-i-1} [x, \ul{d}_{ij}]), \]
for some points $y_{ij} \in \mathrm{Int} d_{ij}^{\d}$.
It suffices to show that the images of $[\xi]$ and $[\xi']$ in $\eul(X')$ under the map $\sigma$ defined in Section \ref{section:eulersubdivision} coincide.

Denote by $I$ the set of simplices $c$ in $A'$ such that $c = \ll \ul{a}_0,\ul{a}_1,\dots,\ul{a}_k \rr$ with $a_0 < a_1 < \dots < a_k \in A$ where $a_0\in B$ and $a_k\notin B$. If $c = \ll \ul{a}_0,\ul{a}_1,\dots,\ul{a}_k \rr$ with $a_0 < a_1 < \dots < a_k \in A$, then we define $a(c) = a_0$ and $b(c) = a_k$.

Then the images of $[\xi]$ and $[\xi']$ in $\eul(X')$ are represented by the Euler chains
\[ \mu = \xi + \sum_{c \in A'} (-1)^{|c|} \ll \ul{b(c)}, \ul{c} \rr \mbox{ and } \mu' = \xi' + \sum_{c\in I} (-1)^{|c|}[y_{ij}(c), \ul{d}_{ij}(c)] + \sum_{c \in A'} (-1)^{|c|} \ll \ul{a(c)}, \ul{c} \rr, \]
respectively, where $d_{ij}(c)$ is the $d_{ij}$ such that $p(d_{ij}) = a(c)$ and for each $i,j$, $[y_{ij}, \ul{d}_{ij}]$ is the composition of the inverse path of $[x, y_{ij}]$ and $[x, \ul{d}_{ij}]$.

We have
\[\sum_{c\in I} (-1)^{|c|}[y_{ij}(c), \ul{d}_{ij}(c)] = \sum_{i = 0}^{m-1} \sum_j \left\{\sum_{c\in I\atop a(c) = p(d_{ij})} (-1)^{|c|}\right\}[y_{ij}, \ul{d}_{ij}],\]
and one can see that the sum in the braces is equal to
\[-(\chi(\mathrm{Link}(p(d_{ij}))) - \chi(\mathrm{Link}_\partial(p(d_{ij})))) = -(1-(1+(-1)^{m-1-(i-1)})) = (-1)^{m-i}\]
where $\mathrm{Link}(p(d_{ij}))$ and $\mathrm{Link}_\partial(p(d_{ij}))$ denote the links of $p(d_{ij})$ in $X'$ and in $Y'$, respectively. Therefore we obtain that
\[\mu' = \xi' + \sum_{i = 0}^{m-1} \sum_j (-1)^{m-i} [y_{ij}, \ul{d}_{ij}] + \sum_{c \in A'} (-1)^{|c|} \ll \ul{a(c)}, \ul{c} \rr. \]

Now the chain $\mu - \mu'$ is homologous to
\[ \begin{split}
&F + \sum_{c \in A'} (-1)^{|c|} \ll \ul{b(c)}, \ul{a(c)} \rr \\
= &\sum_{a, b \in A \atop a < b} \ll \ul{a}, \ul{b} \rr \cdot \left[ (-1)^{|a| + |b|} - \left\{ \sum_{c \in A' \atop a(c) = a, b(c) = b} (-1)^{|c|} \right\} \right].
\end{split} \]
The sum in the braces is equal to the Euler characteristic of the simplicial complex whose $k$-dimensional simplices are the sequences $a = a_0 < a_1 < \dots < a_k = b$.
If $d$ is a simplex spanned by the vertices of the simplex $b$ not in $a$, then the $k$-dimensional simplices are in bijective correspondence with the $(k-1)$-dimensional simplices of the first barycentric subdivision of the simplex containing $\ul{d}$ as a vertex.
So the sum is equal to
\[ - \chi(d, \partial d) = - (-1)^{|d|} = (-1)^{|a| + |b|}. \]
It follows that the chain $\mu - \mu'$ is homologous to $0$.
This concludes the proof of the lemma.

\end{proof}

\subsection{Dual Euler structures: combinatorial and smooth structures}

Note that there is a canonical map $J \colon \vect(N,\partial N)\to \vect(N)$  given by inverting the direction of a representative vector field.
The following is  a generalization of \cite[Lemma~B.4]{Tu90}, which shows the lemma when $N$ is closed.

\begin{lemma}\label{lem:canmaps}
Let $N$ be a compact $m$-manifold. Suppose that $\chi(N) = \chi(N,\partial N)  = 0$.
Then the following diagram commutes:
\[ \xymatrix{ \eul(N,\partial N)  \ar[d]^{ca_N}\ar[r]^-J & \eul(N) \ar[d]^{ca_N} \\ \vect(N,\partial N)\ar[r]^-J & \vect(N).}\]
\end{lemma}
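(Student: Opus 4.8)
The plan is to reduce the statement to the closed case treated by Turaev in \cite[Lemma~B.4]{Tu90} by a doubling argument, together with the combinatorial description of $ca_N$ in terms of Euler chains. First I would recall, following \cite[Sections~6.1~and~6.6]{Tu90}, how $ca_N$ is defined: given an Euler structure $e\in\eul(N,\partial_0 N)$ represented by an Euler chain $\xi$, and given a regular vector field $v$ on $(N,\partial_0 N)$, one compares the image $ca_N^{-1}([v])$ with $[\xi]$ by an intersection/obstruction count along a cell structure, so that the difference $[\xi]-ca_N^{-1}([v])\in\HH_1(N)$ is the obstruction to homotoping $v$ to a field with prescribed singularities dual to $\xi$. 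The key compatibility I need is that inverting the direction of $v$ on $N$ and simultaneously replacing the cell structure $X$ by its dual $X^\d$ produces, on the level of Euler lifts, exactly the map $J'$ of the previous subsection. This is precisely the content of Turaev's proof of \cite[Lemma~B.4]{Tu90} in the closed case, and the local computation there is entirely local to the interior of $N$.

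Next I would set up the doubling. Let $DN=N\cup_{\partial N}N$ be the double of $N$, which is a closed $m$-manifold with $\chi(DN)=2\chi(N,\partial N)=0$ when $m$ is even, and I handle the odd case separately (there $\chi(DN)=0$ automatically). A triangulation $X$ of $N$ with $Y=\partial X$ glues to a triangulation $DX$ of $DN$; an Euler lift of $(X,Y)$ together with its mirror image gives an Euler lift of $DX$, and similarly a regular vector field $v$ on $(N,\partial N)$ (pointing outward on all of $\partial N$) together with its reflection glues to a nowhere-zero vector field on $DN$, after a small perturbation near $\partial N$ to make it smooth and transverse. There is then a restriction map $\eul(DN)\to\eul(N,\partial N)$ and $\vect(DN)\to\vect(N,\partial N)$, equivariant over the inclusion-induced $\HH_1(DN)\to\HH_1(N)$, compatible with $ca$. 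Because Turaev's lemma applies to the closed manifold $DN$, the square for $DN$ with its own $J$ commutes, and restricting to the $N$-half yields the square for $(N,\partial N)$ once I check that the restriction of the closed-manifold $J$ (direction reversal plus dual decomposition) restricts to the $J$ defined in Section~\ref{section:eulerchains}, which in turn follows from Lemma~\ref{lem:jequal} identifying $J$ with $J'$ and the fact that the dual decomposition of $DX$ restricts cell-by-cell to the dual decomposition $X^\d$ of $N$ on the interior.

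Alternatively, and perhaps more cleanly, I would avoid doubling entirely and argue directly, mimicking Turaev's proof of \cite[Lemma~B.4]{Tu90}. Represent $e$ by an Euler lift $\{c_{ij}\}$ of $(X,Y)$; by Lemma~\ref{lem:jequal} we may compute $J(e)$ via $J'$, i.e. via the dual cells $\{c_{ij}^\d\}$ (padded out by the dual cells of a lift of $Y$, as in the proof of Lemma~\ref{lem:jequal}). Choose a gradient-like regular vector field $v$ for a Morse function subordinate to the handle decomposition given by $X$, so that $ca_N([c_{ij}])=[v]$; then $-v$ is gradient-like for the opposite Morse function, whose handle decomposition is exactly the one dual to $X$, i.e. given by $X^\d$, so $ca_N$ applied to the Euler structure of $\{c_{ij}^\d\}$ is $[-v]=J([v])$. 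The boundary behavior is the only new point: $v$ points inward on $\partial_0 N$ and outward on $\partial N\sm\partial_0 N$, hence $-v$ reverses these, which matches the definition of the vertical $J$ on $\vect$; and the padding cells $\{(d_{ij})_\partial^\d\}$ from $\partial X$ account precisely for the boundary faces of the dual handles, so no discrepancy arises.

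The main obstacle I expect is the boundary analysis: making the gradient-like vector field genuinely regular (inward on $\partial_0 N$, outward on its complement) while keeping it gradient-like for a Morse function whose dual handle decomposition is $X^\d$, and then checking that reversing its direction is compatible — on the nose, not just up to the $\HH_1$-action — with the combinatorially defined $J$ on Euler structures including the $\partial$-dual cells. Concretely this amounts to verifying the local index/linking computations of \cite[Appendix~B]{Tu90} in the presence of boundary; the bookkeeping is exactly what was already carried out in the proof of Lemma~\ref{lem:jequal} (the $\chi(\mathrm{Link})$ versus $\chi(\mathrm{Link}_\partial)$ terms), so I would cite that computation and Turaev's closed-case argument rather than repeat it, reducing the proof to a short paragraph that assembles these ingredients.
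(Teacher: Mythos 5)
Your second, direct route is close in spirit to what the paper actually does --- the paper likewise reduces the comparison over the interior of $N$ to Turaev's closed-case argument from \cite[Lemma~B.4]{Tu90} and isolates the boundary as the only new issue --- but at exactly that point your proposal has a genuine gap. The boundary problem is not a combinatorial one about dual cells: it is the question of whether two nowhere-zero vector fields on a collar $L'=\partial N\times[0,1/4]$, which already agree up to homotopy on $\partial N\times 0$ and have prescribed transversal behaviour at the other end, are homotopic through such fields. The paper resolves this by an obstruction-theory computation: the obstruction lives in $H^{m-1}(L',\partial L';\Z^\omega)\cong\HH_1(L')$, and it vanishes because $H_1(L';\Z)\cong H_1(\partial N\times 0;\Z)$ and the restrictions to $\partial N\times 0$ are homotopic. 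The $\chi(\mathrm{Link})$ versus $\chi(\mathrm{Link}_\partial)$ bookkeeping you propose to cite from the proof of Lemma~\ref{lem:jequal} is a statement about Euler chains, not about homotopies of vector fields, so it cannot substitute for this step; saying ``no discrepancy arises'' is precisely the assertion that needs proof. Relatedly, the identification $ca_N([c_{ij}])=[v]$ for a gradient-like field of a Morse function subordinate to $X$ is not the definition in play: $ca_N$ is defined via a collar $N'=N\cup\partial N\times[0,1]$, a singular field $F_1$, and an extension over arcs realizing $\G+\xi$, so even the interior comparison has to be routed through that construction rather than through Morse theory.

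The doubling alternative fares worse. A regular vector field on $(N,\partial N)$ points \emph{inward} on $\partial N$ (not outward, as you wrote), and its mirror image on the second copy of $N$ also points into that copy; at the common boundary of $DN$ the two fields are therefore antipodal, so they do not glue to a nowhere-zero field after ``a small perturbation'' --- one needs a genuine interpolation through tangential directions across a collar, which is essentially the construction you were trying to avoid. Moreover there is no canonical restriction map $\eul(DN)\to\eul(N,\partial N)$ compatible with $ca$ and with the two maps $J$: the chain $F$ for a triangulation of $DN$ contains cross-boundary terms that do not restrict to the chain $F$ for $X$, so the claim that the closed-manifold square restricts to the relative one is itself a statement requiring an argument comparable in length to the direct one.
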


\begin{proof}
The case when $\partial N = \emptyset$ is proved in \cite{Tu90}, and therefore we assume that $\partial N$ is not empty. First, for the reader's convenience we recall the definitions of the maps $ca_N\colon \eul(N,\partial N) \to \vect(N,\partial N)$ and $ca_N\colon \eul(N) \to \vect(N)$ in \cite[Section~6.6]{Tu90}.

Let $(X,Y,t)$ with $t\colon X\to N$  a triangulation of $(N,\partial N)$. Recall that  $Y$ denotes the subcomplex of $X$ consisting of the simplices in $\partial N$. Define $N'$ to be the manifold obtained by attaching $N$ to $\partial N \times [0,1]$ via the natural homeomorphism $\partial N \to \partial N\times 0$. We extend the triangulation $Y \times 0 \coprod Y \times 1$ of the manifold $\partial N\times\{0,1\}$ to some triangulation $k$ of the manifold $\partial N \times[0,1]$ without adding new vertices.
Then we obtain a triangulation of $N'$ using $X$ and $k$, which we denote by $X\cup  k$.
 Let $I$ be the set of the simplices of $k$ that do not lie in $Y\times 0\coprod Y\times 1$. For a simplex $a\notin I$, let $\ul{a}$ be the barycenter of $a$ as usual but if $a\in I$, then let $\ul{a}$ be an (arbitrary) interior point of the simplex $a$ lying in $\partial N\times (1/2)$.
Then we obtain a canonical singular vector field $F_1$ on $N'$ defined in terms of the points $\ul{a}$ where $a\in X\cup k$ (see \cite[Section~6.3]{Tu90}). The singular points of the vector field $F_1$ are exactly the points $\ul{a}$ with $a\in X\cup k$. Moreover, the field $F_1$ is transversal to $\partial N\times (3/4)$ and is directed towards $\partial N\times 0$. Also it is transversal to $\partial N\times (1/4)$ and is directed towards $\partial N\times 1$.

Now we define $ca_N\colon \eul(N,\partial N) \to \vect(N, \partial N)$. Let $\pr_1\colon |k|\to \partial N$ be the projection. We may assume that $\pr_1$ linearly maps simplices onto simplices. For each $a\in I$ denote by $\g_a$ some path $[0,1]\to \pr_1(a)\times [0,1/2]$ from $\ul{a}$ to the barycenter of the simplex $\pr_1(a)$. Let $\G:=-\sum_{a\in I} (-1)^{|a|}\g_a$.

Let $\xi\in \eul(N,\partial N)$. Let $M:=N'\setminus \{\partial N\times (3/4,1])$. Then $\partial M = \partial N\times (3/4)$, and $F_1|_M$ is transversal to $\partial M$ and is directed inwards on $\partial M$. Moreover, one can check that $\partial(\G +\xi) = \sum_{b\in X\cup I} (-1)^{|b|}\ul{b}$. Therefore $\ul{b}$ runs over all singular points of $F_1$ in $M$. The chain $\G+\xi$ can be presented by a chain consisting of oriented arcs joining a point of $X$ to all $\ul{b}$ where $b\in X\cup I$. Note that the vector field $F_1|_M$ is non-singular outside a neighborhood of these arcs. Since $\chi(M)=\chi(N) = 0$, this non-singular vector field defined on the complement of a neighborhood of the arcs in $M$ can be extended to a non-singular vector field $\bar{F}_\xi$ on $(M,\partial M)$.
The vector field $\bar{F}_\xi$ is transferred to $N$ by the canonical diffeomorphism $g\colon M\to N$. Finally, $ca_N(\xi)$ for $\xi\in \eul(N,\partial N)$ is defined to be the homology class of this vector field, $[dg(\bar{F}_\xi)]\in \vect(N,\partial N)$.

On the other hand, for $\xi\in \eul(N)$, $ca_N(\xi)\colon \eul(N) \to \vect(N)$ is defined as follows. Let $L:=N'\setminus \{\partial N\times (1/4,1])\}$ and $F_1$ be the vector field on $N'$ defined as above. Then $\partial L = N\times (1/4)$ and $F_1|_{L}$ is transversal to $\partial L$ and is directed outwards on $\partial L$. Moreover, $\partial \xi = \sum_{b\in X} (-1)^{|b|}\ul{b}$ and therefore $\ul{b}$ runs over all singular points of $F_1$ in $L$. Now the chain $\xi$ can be presented by a chain consisting of oriented arcs joining a point of $X$ to all $\ul{b}$ where $b\in X$ and the vector field $F_1|_{L}$ is non-singular outside a neighborhood of these arcs. Since $\chi(L)=\chi(N) = 0$, similar to the case of $\eul(N,\partial N)$, we can obtain a non-singular vector field $\bar{F}_\xi$ on $L$. This vector field is transferred to $N$ by the
canonical diffeomorphism $h\colon L \to N$ and finally $ca_N(\xi)$ for $\xi\in \eul(N)$ is defined to be the homology class  $[dh(\bar{F}_\xi)]\in \vect(N)$.

Now we prove that the diagram commutes. Let $\xi\in \eul(N,\partial N)$. Let $g'\colon M \to L$ be a diffeomorphism constructed in a similar fashion as $g\colon M\to N$ is defined such that $\partial N\times [1/2,3/4]\subset M$ is sent to $\partial N\times [0,1/4]\subset L$. Also we may assume that $h\circ g' = g$. To show the commutativity of the diagram, it suffices to show that $J(dg'(\bar{F}_\xi))$ is homologous to  $\bar{F}_{J(\xi)}$ in $L$. Note that the singular points of the vector fields $J(dg'(F_1|_M))$ and $F_1|_L$ lie in $N\subset L$, and therefore one can show that the restrictions of $J(dg'(\bar{F}_\xi))$ and $\bar{F}_{J(\xi)}$ to $N$ are homologous in $N$ using the arguments for the closed case in \cite[Lemma B.4]{Tu90}. In fact, their restrictions to $\partial N\times 0$ are homotopic.  Let $L' := \overline{L\setminus N} = \partial N\times [0,1/4]$. Now it is enough to show that $J(dg'(\bar{F}_\xi))$ and $\bar{F}_{J(\xi)}$ are homotopic on $(L',\partial L')$. The obstruction to the homotopy of $u:=J(dg'(\bar{F}_\xi))|_{L'}$ and $v:=\bar{F}_{J(\xi)}|_{L'}$ on $(L',\partial L')$ is an element of the group $H^{m-1}(L',\partial L';\Z^\omega)\cong \HH_1(L')$ where $\omega \colon \pi_1 L' \to \aut(\Z)$ is the first Stiefel-Whitney class of the manifold $L'$ (see \cite[Section 5.2]{Tu90}).
Let $\ell\in \HH_1(L')$ be that element.
But since $H_1(L';\Z)\cong H_1(\partial N\times 0;\Z)$ and $u|_{\partial N\times 0}$ and $v|_{\partial N\times 0}$ are homotopic, one can see that $\ell = 1$ and hence $u$ is homotopic to $v$ in $L'$.
\end{proof}

\subsection{Summary and dual maps}

Let $N$ be a compact $m$-manifold with $\chi(N) = \chi(N,\partial N)  = 0$. Let $(X,Y,t)$ with $t\colon X\to N$ be a triangulation of $(N,\partial N)$.
We denote by $F$ the chain introduced in Section \ref{section:eulerchains}.
In the previous sections we showed that the maps
\[ \ba{rcl} \eul(X,Y)&\xrightarrow{J} & \eul(X) \\[1mm]
[\xi]&\mapsto & [F+(-1)^m\xi]\ea \mbox{ and } \ba{rcl} \eul(X,Y)&\xrightarrow{J'} & \eul(X^\dagger) \\[1mm]
[c]&\mapsto & [c^\dagger]\ea \]
give rise to well--defined maps $J,J':\eul(N,\partial N)\to \eul(N)$ and we saw that the maps are in fact the same.
We furthermore studied the  $J \colon \vect(N,\partial N)\to \vect(N)$  given by inverting the direction of a representative vector field and we showed  that the following diagram commutes:
\[ \xymatrix{ \eul(N,\partial N)  \ar[d]^{ca_N}\ar[r]^-J & \eul(N) \ar[d]^{ca_N} \\ \vect(N,\partial N)\ar[r]^-J & \vect(N).}\]

We now denote the inverse maps $\eul(N)\to \eul(N,\partial N)$ and $\vect(N)\to \vect(N,\partial N)$ by $J$ as well.
It is clear that $J:\vect(N)\to \vect(N,\partial N)$ is given by inverting the direction of a representative vector field
and it is clear that $\eul(N)\to \eul(N,\partial N)$ corresponds to the map
\[ \ba{rcl} \eul(X)&\xrightarrow{J} & \eul(X,Y) \\[1mm]
[\xi]&\mapsto & [(-1)^{m+1}F+(-1)^m\xi].\ea \]
In the following, given $e\in \vect(N)=\eul(N)$ or $e\in \vect(N,\partial N)=\eul(N,\partial N)$
we write $e^\d=J(e)$. Note that by definition we have $(e^\d)^\d=e$.

\subsection{Dual homology orientations}\label{section:dualhomol}

Let $N$ be an oriented $m$-manifold and
let $\partial_0 N$ be the empty set or $\partial N$. We denote by $[N]\in H_m(N,\partial N;\Z)$ the orientation class and denote by $D\colon H_i(N,\partial_0 N;\R)\to H^{m-i}(N,\partial N\sm \partial_0 N;\R)$ the isomorphism given by Poincar\'e duality.
Following \cite[Section~VI.11]{Br93} we then define  the intersection pairings on $N$ as follows:
\[ \ba{ccl} H_i(N,\partial N;\R)\times H_{m-i}(N;\R)&\to& \R \\
(a,b)&\mapsto  & a\cdot b:= (D(b)\cup D(a))\cap [N].\ea \]
Note that the intersection pairings are non-degenerate.

Let $\w$ be a homology orientation of $(N,\partial N)$.
We pick an  orientation for $N$. Following  \cite[p.~178]{Tu86} we will now define a dual homology orientation for $N$.
First note that the above  pairings
 give rise to a non-degenerate pairing
\[ H_*(N,\partial N;\R)\times H_*(N;\R)\to \R.\]
An ordered basis $w_i$ for $H_*(N;\R)$ is called positive if for one, and hence any, positive basis $v_i$ for $H_*(N,\partial N;\R)$
the determinant of the matrix $v_i\cdot w_j$ is positive.
The resulting homology orientation on $N$ is called the \emph{homology orientation dual to $\w$} and denoted by $\w^\d$.
Note that if $\chi(N)=0$, and this is the only case we will consider in this paper, then it follows from $\dim H_*(N;\R)\equiv \chi(N)\mbox{ mod 2}$
that  the definition of the dual homology orientation does not depend on the choice of $[N]$. \
On the other hand, if $\w$ is a homology orientation on $N$, then the homology orientation on $(N,\partial N)$ obtained via the above non-degenerate pairing  is also called the \emph{homology orientation dual to $\w$} and denoted by $\w^\d$. Note that if $\w$ is a homology orientation on $N$ or $(N,\partial N)$ then $(\w^\d)^\d = \w$.

Let $N$ be a closed $m$-manifold. Following Turaev \cite[Appendix,~Theorem~5]{Tu86} we now define
\[ z(N):=\left\{ \ba{ll} 0, &\mbox{ if }m\equiv 2 \mod 4 \mbox{ or }m\equiv 3 \mod 4, \\
\sum_{i=0}^{[m/2]} b_i(N), &\mbox{ if }m\equiv 1\mod 4,\\
\sum_{i=0}^{m/2} b_i(N)\,\,+\frac{1}{2}(b_{\frac{m}{2}}(N)-\sign(N)), &\mbox{ if }m\equiv 0\mod 4.\ea \right.\]
The following theorem is due to Turaev (see \cite[p.~179]{Tu86}):

\begin{theorem}\label{thm:dualor}
Let $N$ be a closed $m$-manifold and $\w$ a homology orientation. Then
\[\w^\d = (-1)^{n}\w \]
where
\[ n=z(N)+\sum_{i=0}^m\b_i(N)\b_{i-1}(N)\,\,+\sum_{i=0}^{[m/2]}\b_{2i}(N).\]
\end{theorem}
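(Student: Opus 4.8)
The plan is to reduce Theorem~\ref{thm:dualor} to the computation of the sign of a single determinant and then to evaluate that sign one pair of degrees at a time using Poincar\'e duality. Fix an orientation $[N]\in H_m(N;\Z)$ and an ordered basis $v=(v_1,\dots,v_r)$ of $\bigoplus_i H_i(N;\R)$ representing $\w$, arranged so that vectors of smaller degree come first, and let $M$ be the $r\times r$ Gram matrix with entries $v_a\cdot v_b$ for the total intersection pairing of Section~\ref{section:dualhomol}. Non-degeneracy of Poincar\'e duality shows $M$ is invertible, and unwinding the definition of the dual homology orientation --- recalling that for closed $N$ one presents $\bigoplus_i H_i$ as dual to $\bigoplus_i H_{m-i}$, so that $\w^\d$ is compared with $\w$ only after a degree-reversing reordering --- identifies the sign of $\det M$ with $(-1)^n$. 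I would first record that this sign is independent of the choice of $v$: replacing $v$ by $Pv$ replaces $M$ by $PMP^t$, so $\det M$ changes by the positive factor $\det(P)^2$; the same computation shows that positivity of $v$ is irrelevant, and that the sign is independent of $[N]$ whenever $\chi(N)=0$ (via $\dim_\R\bigoplus_i H_i(N;\R)\equiv\chi(N)\bmod 2$), which covers all the applications.

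Next I would use that the intersection pairing pairs $H_i$ with $H_{m-i}$, so that $M$ is block-anti-diagonal for the degree grading. Reversing the order of the column blocks turns $M$ into a block-diagonal matrix with blocks $P_0,\dots,P_m$, where $P_i$ is the matrix, in the chosen bases, of the pairing $H_i(N;\R)\times H_{m-i}(N;\R)\to\R$; this reordering contributes the permutation sign $(-1)^{\sum_{i<j}b_i(N)b_j(N)}$. For $i<m/2$ the graded-commutativity relation $a\cdot b=(-1)^{i(m-i)}b\cdot a$ gives $\det P_i\cdot\det P_{m-i}=(-1)^{i(m-i)b_i(N)}(\det P_i)^2$, so the pair of degrees $(i,m-i)$ contributes the explicit, basis-independent sign $(-1)^{i(m-i)b_i(N)}$. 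When $m$ is odd this accounts for every block.

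When $m$ is even the middle block $P_{m/2}$, the matrix of the intersection form on $H_{m/2}(N;\R)$, remains, and this is exactly where the signature enters. If $m\equiv2\bmod4$ the form is non-degenerate and skew-symmetric, so $b_{m/2}(N)$ is even and $\det P_{m/2}$ is the square of the Pfaffian, contributing $+1$. If $m\equiv0\bmod4$ it is non-degenerate and symmetric, so by Sylvester's law of inertia the sign of $\det P_{m/2}$ equals $(-1)^q$, where $q=\tfrac{1}{2}(b_{m/2}(N)-\sign(N))$ is the number of negative eigenvalues; this is precisely the term $\tfrac{1}{2}(b_{m/2}(N)-\sign(N))$ occurring in $z(N)$. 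Putting the pieces together, $(-1)^n$ is the product of the reversal-permutation sign, the factors $(-1)^{i(m-i)b_i(N)}$ for $i<m/2$, and the middle contribution just described.

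The last step is purely combinatorial: rewrite this exponent, which is phrased in the Betti numbers $b_i(N)$, in terms of the renormalization quantities $\b_i(N)=\sum_{j\le i}(-1)^{j-i}b_j(N)$, using $b_i(N)=\b_i(N)+\b_{i-1}(N)$ and Poincar\'e duality $b_i(N)=b_{m-i}(N)$ to telescope the sums, and splitting into the residues of $m$ modulo $4$. One then checks that the reversal-permutation sign together with the factors $(-1)^{i(m-i)b_i(N)}$ collapse to $z(N)+\sum_i\b_i(N)\b_{i-1}(N)+\sum_{i\le m/2}\b_{2i}(N)$, as claimed. I expect this bookkeeping --- and in particular pinning down the exact convention by which the increasing-degree orientation $\w$ is compared with the Poincar\'e-dual orientation --- to be the only real obstacle: the topological inputs (non-degeneracy and graded-commutativity of the intersection pairing, and the identification of $\tfrac{1}{2}(b_{m/2}(N)-\sign(N))$ with the number of negative eigenvalues of the intersection form on $H_{m/2}(N;\R)$) are standard, and essentially all of the difficulty lies in carrying the signs through the reordering and the Betti-number-to-$\b_i$ conversion without error --- which is why the result is originally Turaev's and is quoted here rather than reproved in detail.
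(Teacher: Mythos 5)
First, a remark on the comparison itself: the paper does not prove this statement — it is quoted from Turaev \cite[p.~179]{Tu86} — so there is no in-paper argument to measure yours against. Your skeleton (reduce to the sign of the Gram determinant of the total intersection pairing; exploit the block structure pairing $H_i$ with $H_{m-i}$; handle the off-middle blocks by graded commutativity, the middle block by the Pfaffian for $m\equiv 2\bmod 4$ and by Sylvester's law for $m\equiv 0\bmod 4$; convert Betti numbers to the $\b_i$'s) is the standard route to such a formula, and the topological inputs you list are all correct.

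The gap is that the one step you defer ("one then checks that \dots collapse to $n$") is where the entire content of the theorem lives, and as you have set it up the asserted identity is false. Take $N=S^3$: then $\b_*=(1,-1,1,0)$, $z(N)=0$, and $n=(-1-1+0)+(1+1)=0$, so the theorem asserts $\w^\d=\w$. Your product of signs, however, equals $-1$: the block sizes are $(1,0,0,1)$, so the reversal-permutation sign is $(-1)^{b_0b_3}=-1$, the single degree pair $(0,3)$ contributes $(-1)^{0\cdot 3\cdot b_0}=+1$, and there is no middle block. The same failure occurs for $S^1$, $S^2$, $T^2$, and indeed for every closed oriented $3$-manifold, where your product is $(-1)^{1+b_1(N)}$ while $n$ is always even. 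Dropping the reversal-permutation sign (i.e.\ comparing against the decreasing-degree ordering) repairs all of these cases but then fails for $\C P^2$, where $n$ is odd although every aligned block has positive determinant. So the issue you flag — "pinning down the exact convention by which the increasing-degree orientation $\w$ is compared with the Poincar\'e-dual orientation" — is not a bookkeeping detail to be sorted out at the end: different plausible readings of the definition give different answers, none of the naive ones reproduces $(-1)^{n}$ uniformly, and the correct normalization (built into Turaev's identification of the dual complex $C^\d_i=\ol{\hom_\F(C_{m-i},\F)}$ with its dual bases) must be fixed \emph{before} the sign chase begins. As written, your argument would terminate in an identity that a two-line example refutes, so the proof is not complete.
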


\section{Duality for torsion of manifolds equipped with Euler structures}
\label{sec:duality for torsion}

\subsection{The algebraic duality theorem for torsion}

Let $\F$ be a field with involution. Given a vector space $V$ we denote by $\ol{V}$ the vector space with the same underlying abelian group but with involuted $\F$-structure.

In the following let $C_*$ be a chain complex of length $m$ over $\F$.
Suppose that $C_*$ is equipped with an ordered basis $c_i$ for each $C_i$. We write $H_i=H_i(C)$
and we suppose that $H_*$ is equipped with an ordered basis $h_*$.
We denote by $C^\d$ the \emph{dual chain complex}
as defined in \cite[Section~2.2.2]{Tu86}: the chain groups
are $C^\d_i:=\ol{\hom_\F(C_{m-i},\F)}$ and $\partial_i:C_{i+1}^\d\to C_i^\d$
is given by $(-1)^{m-i}\partial_{m-i-1}^*$. We denote by $c_*^\d$ and $h_*^\d$ the bases of $C^\d$ respectively $H_*(C^\d)$ dual to the bases $c_*$ and $h_*$.

Following \cite[p.~142]{Tu86} we define
\[ \ba{rcl}
 r(C_*)&=&\sum_{i=0}^m \big(\a_i(C)\a_{i-1}(C)+\b_i(C)\b_{i-1}(C)\big)+\sum_{i=0}^{[m/2]}\big(\a_{2i}(C)+\b_{2i}(C)\big).\ea \]
When $C_*$ is the ordinary chain complex of a pair of spaces $(X,Y)$, then we write $r(X,Y)=r(C_*(X,Y))$.
The following lemma is now an immediate consequence of Lemma 7 in the appendix of \cite{Tu86}:

\begin{lemma}\label{lem:dual}
We have
\[ r(C_*)=r(C_*^\d)\]
and the following equality holds:
\[\tau(C_*,c_*,h_*) =(-1)^{r(C_*)}\tau(C_*^\d,c_*^\d,h_*^\d)^{(-1)^{m+1}}.\]
\end{lemma}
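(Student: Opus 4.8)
The plan is to deduce this lemma from the corresponding statement in Turaev's appendix (\cite{Tu86}, Lemma~7), which is the source cited in the statement, and to spell out how the two renormalization constants $r(C_*)$ and $r(C_*^\d)$ interact. First I would recall precisely what Lemma~7 of the appendix of \cite{Tu86} gives: for an acyclic or non-acyclic based complex $C_*$ of length $m$ over $\F$, one has an identity relating $\tau(C_*,c_*,h_*)$ and $\tau(C_*^\d,c_*^\d,h_*^\d)$ up to a sign that is an explicit combinatorial function of the dimensions $\dim C_i$ and $\dim H_i$, together with the exponent $(-1)^{m+1}$ on the dual torsion coming from the fact that dualizing a complex reverses the direction of the differentials and transposes matrices (so the torsion gets inverted, hence the sign $(-1)^{m+1}$ in the exponent). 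The content is then purely bookkeeping: identify that combinatorial sign with $(-1)^{r(C_*)}$.

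The key steps, in order, would be: (1) Verify the symmetry $r(C_*)=r(C_*^\d)$ directly from the definition. Here I use that $\dim C_i^\d=\dim C_{m-i}$ and $\dim H_i(C^\d)=\dim H_{m-i}(C)$ (the latter because $H_i(C^\d)\cong\ol{\hom_\F(H_{m-i}(C),\F)}$ over a field, with no $\Ext$ term), which gives $\a_i(C^\d)=(-1)^i\sum_{j\le i}(-1)^{j}\dim C_{m-j}$ and similarly for $\b$. Reindexing $j\mapsto m-j$ and comparing the two sums $\sum_i(\a_i(C)\a_{i-1}(C)+\b_i(C)\b_{i-1}(C))$ and $\sum_i(\a_{2i}(C)+\b_{2i}(C))$ against their duals is a finite manipulation; the product terms $\a_i\a_{i-1}$ are manifestly symmetric under the reindexing, and the $\sum_{i\le[m/2]}\a_{2i}$ term is where one must be slightly careful with the parity of $m$, but it works out. (2) Compare the sign produced by Turaev's Lemma~7 with $(-1)^{r(C_*)}$. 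Turaev's sign is written in terms of $\dim C_i$ and $\dim H_i$; I would expand $\a_i$ and $\b_i$ as alternating partial sums, substitute, and collect mod $2$. (3) Conclude.

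I expect step~(2) — matching Turaev's raw combinatorial sign to the compact expression $(-1)^{r(C_*)}$ — to be the main obstacle, not because it is deep but because it is the kind of mod-$2$ arithmetic where sign conventions (Milnor's versus Turaev's, the placement of $(-1)^{i+1}$ in the definition of $\tau$, the $(-1)^{m-i}$ in the dual differential) must be tracked with complete consistency; a single mismatch in convention propagates into a wrong global sign. The safest route is probably to \emph{not} re-derive Turaev's Lemma~7 but to quote it verbatim and then perform only the translation of notation, checking the identity on a few small cases ($m=0,1,2$, and an acyclic versus a non-acyclic example) to catch any convention slip. Since the excerpt explicitly says ``The following lemma is now an immediate consequence of Lemma~7 in the appendix of \cite{Tu86}'', I anticipate the actual proof in the paper is just a one-line citation, and the honest statement of the plan is: reduce to Turaev's appendix, verify $r(C_*)=r(C_*^\d)$ by the reindexing above, and match the signs by direct substitution.
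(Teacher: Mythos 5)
Your proposal matches the paper's treatment exactly: the paper gives no separate proof and simply declares the lemma ``an immediate consequence of Lemma~7 in the appendix of \cite{Tu86}'', which is precisely the reduction you describe, and your reindexing argument for $r(C_*)=r(C_*^\d)$ (using $\dim C_i^\d=\dim C_{m-i}$ and $\dim H_i(C^\d)=\dim H_{m-i}(C)$ over a field) is the correct and standard verification. Your anticipated caution about matching sign conventions is well placed but does not reveal any gap.
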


\subsection{The duality theorem for manifolds}

In this section we will prove the following duality theorem.

\begin{theorem}\label{thm:dualitygeneral}
Let $N$ be a compact, orientable $m$-manifold.  We assume that $\chi(N)=\chi(N,\partial N)=0$.
Let $\varphi \colon \pi_1(N)\to \gl(d,\F)$ be a representation over a field with involution, let $e$ be  an Euler structure for $(N,\partial N)$ and let $\w$ be a homology orientation
for $(N,\partial N)$. Then
 $H_*^\varphi(N,\partial N;\F^d)=0$ if and only if $H_{*}^{\varphi^\d}(N;\F^d)=0$.
 Furthermore, if these groups vanish,
 then
\[ \tau(N,\partial N,\varphi,e,w)=(-1)^{ds(N)}\ol{\tau(N,\varphi^\d,e^\d,\w^\d)^{(-1)^{m+1}}},\]
where
$\varphi^\d$ is the dual of $\varphi$ and
\[  s(N)=\sum_{i=0}^m \b_i(N)\b_{i-1}(N)+\sum_{i=0}^{[m/2]}\b_{2i}(N). \]
\end{theorem}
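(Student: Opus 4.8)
The strategy is to reduce the duality statement for the manifold $N$ to the purely algebraic duality statement for based chain complexes, namely Lemma~\ref{lem:dual}, applied to the twisted chain complex of a triangulation. First I would fix an orientation of $N$ and a $C^1$-triangulation $(X,Y,t)$ of $(N,\partial N)$, so that $Y$ is the subcomplex corresponding to $\partial N$. I would then consider the dual cell decomposition $X^\d$ of $N$ from Section~\ref{section:dualdecomposition}, and recall that the twisted cellular chain complex $C_*^\varphi(X,Y;\F^d)$ is, up to the involution on $\F$, identified with the dual chain complex $\big(C_*^{\varphi^\d}(X^\d;\F^d)\big)^\d$ in the sense of Section~``The algebraic duality theorem for torsion''. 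This is the standard Poincar\'e--Lefschetz duality at the level of cellular chains: each $i$-cell $a$ of $X$ in the interior pairs with its dual $(m-i)$-cell $a^\d$, and the boundary maps match up (with the sign $(-1)^{m-i}$) because the intersection numbers of dual cells are $\pm 1$. The representation $\varphi^\d(g)=\ol{\varphi(g^{-1})}^t$ is exactly what is needed so that the twisted coefficient module of the dual complex is the $\F$-linear dual (with involuted scalars) of $\F^d$; this also gives the first assertion, that $H_*^\varphi(N,\partial N;\F^d)=0$ iff $H_*^{\varphi^\d}(N;\F^d)=0$.

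\smallskip
Next, assuming these homology groups vanish, I would apply Lemma~\ref{lem:dual} to $C_*=C_*^\varphi(X,Y;\F^d)$ (which has length $m$ and, since $\alpha_i^\varphi(X,Y)=d\cdot\alpha_i(X,Y)$ and all $\beta$'s vanish, has $r(C_*)=\sum(\alpha_i^\varphi\alpha_{i-1}^\varphi)+\sum\alpha_{2i}^\varphi$). This yields
\[ \tau\big(C_*^\varphi(X,Y;\F^d),\{c_*\otimes v_*\}\big) = (-1)^{r(C_*^\varphi(X,Y;\F^d))}\,\ol{\tau\big((C_*^\varphi)^\d,c_*^\d,h_*^\d\big)^{(-1)^{m+1}}}, \]
and then I would identify $(C_*^\varphi(X,Y;\F^d))^\d$ with $C_*^{\varphi^\d}(X^\d;\F^d)$ via the cellular duality above, checking that the dual basis $c_*^\d$ corresponds (up to sign and the involution on $\F^d$, which is trivial on the chosen canonical basis $v_*$) to the basis of $X^\d$ given by the dual cells. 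The key geometric input here is that this identification sends the Euler lift representing $e$ to one representing $e^\d=J'(e)=J(e)$ (by Lemma~\ref{lem:jequal}); this is precisely why the invariant $J$ on Euler structures was developed in Section~\ref{sec:duality for Euler structures}. Passing from the combinatorial torsion to the manifold invariant $\tau(N,\partial_0 N,\varphi,e,\w)$ uses triangulation-invariance (Lemma~\ref{lem:subdiv}) and the fact that $X^\d$ and $X$ share the common barycentric subdivision.

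\smallskip
The remaining work is bookkeeping of the sign-refinement factors. I would track three separate sign contributions: (i) the passage from $\tau$ to $\check\tau$ on both sides, contributing powers of $(-1)$ via $\eta^\varphi(X,Y)$ and $\eta^{\varphi^\d}(X)$, which both vanish by acyclicity, so the twisted parts contribute only the $(-1)^{r(\cdot)}$ from Lemma~\ref{lem:dual}; (ii) the $d$-th power of the sign of the untwisted $\check\tau$ built from the real homology orientation, where I would invoke the definition of the dual homology orientation $\w^\d$ from Section~\ref{section:dualhomol} together with Turaev's Theorem~\ref{thm:dualor} (applied to the doubled or appropriate closed manifold, or directly to the relative situation as in \cite[Appendix]{Tu86}) to compute how $\sign\big(\check\tau(C_*(X,Y;\R),\ol c_*,h_*)\big)$ relates to $\sign\big(\check\tau(C_*(X^\d;\R),\ol{c_*^\d},h_*^\d)\big)$; and (iii) the difference between $r(C_*^\varphi(X,Y;\F^d))=d\cdot(\sum\alpha_i\alpha_{i-1})+d\cdot(\text{something})$ and the analogous quantity — actually I expect $r(C_*^\varphi)$ to depend only on the $\alpha$'s, hence only on $d$ and the Betti-number-free combinatorics of $X$, and to cancel against the corresponding term on the $X^\d$ side up to a controlled residue, leaving exactly the claimed factor $(-1)^{ds(N)}$ with $s(N)=\sum_{i=0}^m\beta_i(N)\beta_{i-1}(N)+\sum_{i=0}^{[m/2]}\beta_{2i}(N)$. \textbf{The main obstacle} I anticipate is precisely this sign accounting: isolating from the several $\eta$, $r$, and $z$ terms (some involving $\alpha_i$, which are large and triangulation-dependent, and must cancel) the clean residual $ds(N)$, and correctly handling the homology-orientation sign in the relative Poincar\'e--Lefschetz setting where $\partial N\neq\emptyset$, since Turaev's published duality is primarily stated for closed manifolds and the relative version requires care about which pairing and which orientation class is used. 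Everything else is a faithful adaptation of Turaev's arguments in \cite{Tu86,Tu90,Tu01}.
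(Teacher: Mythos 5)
Your proposal follows essentially the same route as the paper: cellular Poincar\'e--Lefschetz duality realized as a (sesquilinear) intersection pairing between $C_*(\wti X,\wti Y)$ and $C_*(\wti{X^\d})$, identification of the dual complex and dual bases, Lemma~\ref{lem:jequal} to see that the dual Euler lift represents $e^\d$, Lemma~\ref{lem:dual} applied separately to the twisted and the untwisted real complexes, and the mod-2 cancellation of the $\a$-terms (using $\a_i^\varphi=d\,\a_i$ and $\b_*^\varphi=0$) leaving $(-1)^{ds(N)}$. The only small deviation is your suggestion to invoke Theorem~\ref{thm:dualor} for the homology-orientation sign: that theorem is only needed later in the closed case, whereas here the paper (and your hedged alternative) handles the untwisted sign directly by applying Lemma~\ref{lem:dual} to $C_*(X,Y;\R)$ with the bases $h_*$ and $h_*^\d$, which are dual by the very definition of $\w^\d$.
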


The proof of this theorem will require the remainder of this subsection.

Let $X$ be a triangulation for $N$. Let $X^\d$ be the  CW complex dual to $(X,Y)$ of zero Euler characteristic.
 We write $\pi=\pi_1(X)=\pi_1(N)=\pi_1(X^\d)$.
Let $\varphi \colon \pi\to \gl(d,\F)$ be a representation over a field with involution.
First note that by Poincar\'e duality and the universal coefficient theorem we have
\[ H_i^\varphi(N,\partial N;\F^d)\cong H^{m-i}_{\varphi^\d}(N;\F^d)\cong H_{m-i}^{\varphi^\d}(N;\F^d).\]
In particular $H_*^\varphi(N,\partial N;\F^d)= 0$ if and only $H_{*}^{\varphi^\d}(N;\F^d)=0$.
Now suppose that these homology groups are indeed zero.

First note that $X^\d$ and $X$ share a common subdivision, namely the first barycentric subdivision. By Lemma \ref{lem:subdiv}
we can therefore use $X^\d$ to calculate the twisted torsion of $N$.

For the remainder of this section we pick an Euler lift which represents $e\in \eul(X,Y)$.
We pick an ordering for the cells and for each cell we pick an orientation. We denote the resulting oriented $i$-cells by $c_{i1},\dots,c_{is_i}$.
We now also pick an orientation for $N$. For each $(i,j)$ we now denote by $c^\d_{ij}$ the unique oriented cell in $\wti{X^\d}$ which has intersection number $+1$ with $c_{ij}$.
We denote by $\ol{c_{ij}}$ and $\ol{c^\d_{ij}}$ the projection of the cells to $X$ respectively $X^\d$.
Let $h_*$ be a basis of $H_*(N,\partial N;\R)=H_*(X,Y;\R)$ which represents the homology orientation $\w$ and denote by $h_*^\d$ the dual basis of $H_*(N;\R)=H_*(X^\d;\R)$.

Note that by Lemma \ref{lem:jequal} the Euler lift $\{c^\d_{ij}\}$ represents $e^\d = J(e) \in \eul(N)$.
Theorem \ref{thm:dualitygeneral} now follows immediately from the definitions, the equalities $ \a_i^\varphi(X)=d\a_i(X)$ and the following two lemmas.

\begin{lemma}
The following equality holds:
\[ \tau(C_*(X,Y),c_*,h_*)=(-1)^r\tau(X^\d,\{c_{ij}^\d\},h_*^\d)^{(-1)^{m+1}},\]
where
\[ r:=\sum_{i=0}^m \big(\a_i(X)\a_{i-1}(X)+\b_i(X)\b_{i-1}(X)\big)+\sum_{i=0}^{[m/2]}\big(\a_{2i}(X)+\b_{2i}(X)\big).\]
Furthermore
\[ \eta(X,Y)=\eta(X^\d).\]
\end{lemma}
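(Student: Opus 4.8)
The plan is to apply the purely algebraic duality statement of Lemma~\ref{lem:dual} to the untwisted real chain complex $C_* = C_*(X,Y;\R)$, using the identification of its algebraic dual $C_*^\d$ with the cellular chain complex of the dual decomposition $X^\d$. First I would recall from Section~\ref{section:dualdecomposition} that for a triangulation $X$ of the oriented $m$-manifold $N$ with $Y = \partial X$, the dual cells $\{a^\d\}$ (together with the boundary dual cells) give a CW decomposition $X^\d$ of $N$, and that there is a canonical perfect intersection pairing between the $i$-cells of $(X,Y)$ and the $(m-i)$-cells of $X^\d$. Concretely, each oriented $i$-cell $c_{ij}$ of $\wti X \setminus \wti Y$ meets exactly one cell $c_{ij}^\d$ of $\wti{X^\d}$, oriented so that the intersection number is $+1$; this sets up a based isomorphism $C_i(X,Y;\R) \cong \ol{\hom_\R(C_{m-i}(X^\d;\R),\R)} = C^\d_i$ carrying the basis $\{c_{ij}\}$ to the basis $\{c_{ij}^\d\}$ dual to the cellular basis of $X^\d$. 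Under this identification the boundary map of $C_*^\d$, namely $(-1)^{m-i}\partial^*$, matches the cellular boundary of $X^\d$ up to the standard sign coming from Poincar\'e--Lefschetz duality; this is precisely the content of \cite[Section~14]{Tu01} and \cite[Section~2.2.2]{Tu86}, so I would cite that rather than recompute the signs. Similarly the dual basis $h_*^\d$ of $H_*(C^\d) \cong H_*(N;\R)$ is the one named in the statement.

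With this dictionary in place, the first displayed equality is immediate: Lemma~\ref{lem:dual} gives
\[ \tau(C_*,c_*,h_*) = (-1)^{r(C_*)}\,\tau(C_*^\d,c_*^\d,h_*^\d)^{(-1)^{m+1}}, \]
and the right-hand side is exactly $(-1)^r\,\tau(X^\d,\{c_{ij}^\d\},h_*^\d)^{(-1)^{m+1}}$ once we note that $r = r(C_*(X,Y;\R))$ by definition of $r(X,Y)$, that $\a_i(X) = \a_i(C_*(X,Y;\R))$ and $\b_i(X) = \b_i(C_*(X,Y;\R))$, and that $c_*^\d = \{c_{ij}^\d\}$ and $h_*^\d$ under the identification above. (Here I am using that $\F = \R$ with trivial involution, so $\ol{\hom_\R(-,\R)}$ is just the ordinary dual and no complex conjugation intervenes.)

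For the second equality $\eta(X,Y) = \eta(X^\d)$, I would argue from the definition $\eta(C_*) = \sum_i \a_i(C_*)\b_i(C_*)$ together with the duality symmetry $r(C_*) = r(C_*^\d)$ already supplied by Lemma~\ref{lem:dual}. The key inputs are: $C_*^\d$ is the cellular complex of $X^\d$, so $\a_i(X^\d) = \a_i(C_*^\d)$ and $\b_i(X^\d) = \b_i(C_*^\d)$; Poincar\'e--Lefschetz duality gives $H_i(X^\d;\R) = H_i(N;\R) \cong H^{m-i}(N,\partial N;\R) \cong H_{m-i}(X,Y;\R)$, hence $\dim H_i(X^\d) = \dim H_{m-i}(X,Y)$, and likewise $\dim C_i(X^\d) = \dim C_{m-i}(X,Y)$ by the cell-by-cell bijection; so $\a_i(X^\d)$ and $\b_i(X^\d)$ are the ``reversed'' analogues of $\a_i(X,Y)$, $\b_i(X,Y)$. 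Substituting these relations into $\eta(X^\d) = \sum_i \a_i(X^\d)\b_i(X^\d)$ and reindexing, one checks that $\eta(X^\d)$ and $\eta(X,Y)$ differ by an even integer; the cleanest route is to observe that both $\eta$ and $r$ are built from the same sequences $\{\a_i\},\{\b_i\}$ and that the parity bookkeeping in \cite[Appendix, Lemma~7]{Tu86} already forces $\eta(C_*) \equiv \eta(C_*^\d) \bmod 2$ whenever $r(C_*) = r(C_*^\d)$, since $\eta$ is exactly the quantity measuring the sign discrepancy between $\check\tau$ and $\tau$. I expect the main obstacle to be precisely this sign/parity bookkeeping for $\eta$: one has to be careful that the boundary-dual cells $a_\partial^\d$ of $X^\d$ do not disturb the cell counts (they are accounted for because $\chi(N) = \chi(N,\partial N) = 0$ is assumed), and that the reindexing $i \leftrightarrow m-i$ interacts correctly with the alternating signs in the definitions of $\a_i$ and $\b_i$. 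Everything else is a direct translation of Turaev's constructions and a bookkeeping check modulo $2$.
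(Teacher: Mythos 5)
Your proposal matches the paper's proof: both identify $C_*(X^\d;\R)$ with the algebraic dual of $C_*(X,Y;\R)$ via the cell--dual-cell intersection pairing (deferring the sign bookkeeping to \cite[Section~14]{Tu01}), check that $\{c_{ij}^\d\}$ and $h_*^\d$ are the dual bases, and then invoke Lemma~\ref{lem:dual}; the equality $\eta(X,Y)=\eta(X^\d)$ is likewise handled by reference to Lemma~7 in the appendix of \cite{Tu86} (your reindexing $i\leftrightarrow m-i$ argument, using $\chi=0$, is exactly what that reference carries out). The proposal is correct and takes essentially the same route as the paper.
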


\begin{proof}
For the equality $\eta(X,Y)=\eta(X^\d)$ we refer to proof of Lemma 7 in the appendix of \cite{Tu86}.
Note that for each $i$ there is a canonical, non-singular intersection pairing
\[C_{m-i}({X},Y;\R)\times   C_{i}({X^\d};\R)  \to \R\]
which is induced by the condition $a\cdot b^\d=\delta_{ab}$ for cells of $X\sm Y$ and $X^\d$.
By  \cite[Claim~14.4]{Tu01} we then obtain  the following  commutative diagram:
\[\ba{cclcl}
C_{i+1}({X,Y};\R)
&\times& C_{m-i-1}({X^\d};\R)
&\to &\R\\[2mm]
\downarrow \partial_i &&\hspace{0.7cm} \uparrow (-1)^{i+1}\partial_{m-i-1} &&\downarrow = \\[2mm]
 C_{i}({X,Y};\R) &\times &C_{m-i}({X^\d};\R) &\to &\R.\ea \]
This shows that we can identify $C_*(X^\d;\R)$ with the chain complex dual to $C_*(X,Y;\R)$.
It is straightforward to verify that under this identification the bases  $c_*^{\d}$ and $h_*^\d$ are dual to the bases of $c_*$ and $h_*$.
The lemma now follows immediately from Lemma \ref{lem:dual}.
\end{proof}

\begin{lemma}
\[ \tau(C_*^\varphi(X,Y;\F^d),\{c_{ij}\otimes v_k\})=(-1)^{r^\varphi}\ol{\tau(C_*^{\varphi^\d}(X^\d;\F^d),\{c_{ij}^\d\otimes v_k\})^{(-1)^{m+1}}}.\]
where
\[ r^\varphi:=\sum_{i=0}^m \a^\varphi_i(X)\a^\varphi_{i-1}(X)+\sum_{i=0}^{[m/2]}\a^\varphi_{2i}(X).\]
\end{lemma}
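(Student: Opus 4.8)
The plan is to mimic the proof of the preceding (untwisted) lemma, replacing the intersection pairing on the ordinary chain complexes by a sesquilinear pairing on the twisted chain complexes and invoking the algebraic duality Lemma~\ref{lem:dual} with the dual complex in the sense of \cite[Section~2.2.2]{Tu86}. First I would set up, for each $i$, the sesquilinear pairing
\[
C_i^\varphi(X^\d;\F^d)\times C_{m-i}^\varphi(X,Y;\F^d)\to \F,\qquad
(c_{ij}^\d\otimes v_k,\, c_{i'j'}\otimes v_{k'})\mapsto \delta_{(ij)(i'j')}\cdot \ol{v_k}^{\,t}v_{k'},
\]
which is well defined because the geometric intersection pairing $a\cdot b^\d=\delta_{ab}$ is $\pi$-equivariant on the universal covers, and the two tensor factors $\F^d$ are paired by the standard Hermitian form $\langle v,w\rangle=\ol{v}^{\,t}w$. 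The point is that this pairing identifies $C_*^\varphi(X^\d;\F^d)$ with the \emph{conjugate} dual of $C_*^{\varphi^\d}(X,Y;\F^d)$: for $g\in\pi$ the relation $\langle \varphi(g)v,\varphi^\d(g)w\rangle=\langle \varphi(g)v,\ol{\varphi(g^{-1})}^{\,t}w\rangle=\langle v,w\rangle$ is exactly what makes the pairing descend to the tensor product over $\Z[\pi]$ in the required variance.

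Next I would verify the commutativity of the boundary diagram, i.e.\ that under this pairing $\partial^{\varphi}$ on $C_*^\varphi(X^\d;\F^d)$ is carried to $(-1)^{i+1}(\partial^{\varphi^\d})^*$ on the dual of $C_{*}^{\varphi^\d}(X,Y;\F^d)$, with precisely the same sign that appears in the untwisted case. This is a direct consequence of the corresponding statement for ordinary chains, namely \cite[Claim~14.4]{Tu01}, combined with the observation that the coefficient pairing is $\pi$-invariant; concretely, the twisted boundary maps are obtained from the geometric ones by tensoring, so the sign is inherited unchanged. Having done this, $C_*^{\varphi}(X^\d;\F^d)$ is, basis and all, identified with the dual complex $\bigl(C_*^{\varphi^\d}(X,Y;\F^d)\bigr)^\d$ of \cite[Section~2.2.2]{Tu86}, and one checks — just as in the untwisted lemma — that under this identification the basis $\{c_{ij}^\d\otimes v_k\}$ goes to the basis dual to $\{c_{ij}\otimes v_k\}$ (here the conjugation built into the dual complex is what turns $\{c_{ij}^\d\otimes v_k\}$, paired via the Hermitian form, into the honest dual basis, and this is the source of the overline on the right-hand side).

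With the identification in hand, Lemma~\ref{lem:dual} applied to $C_*=C_*^{\varphi^\d}(X,Y;\F^d)$ gives
\[
\tau\bigl(C_*^{\varphi^\d}(X,Y;\F^d),\{c_{ij}\otimes v_k\}\bigr)
=(-1)^{r(C_*^{\varphi^\d}(X,Y;\F^d))}\,\ol{\tau\bigl(C_*^{\varphi}(X^\d;\F^d),\{c_{ij}^\d\otimes v_k\}\bigr)}^{\,(-1)^{m+1}},
\]
and since $\varphi$ and $\varphi^\d$ have the same dimension we have $\a_i^{\varphi}(X)=\a_i^{\varphi^\d}(X)=d\,\a_i(X)$ and $H_*^\varphi=H_*^{\varphi^\d}=0$, so the $\b$-terms in $r$ vanish and $r(C_*^{\varphi^\d}(X,Y;\F^d))=\sum_{i}\a_i^\varphi(X)\a_{i-1}^\varphi(X)+\sum_{i}\a_{2i}^\varphi(X)=r^\varphi$. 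Taking conjugates of both sides (note $\ol{\ol{x}}=x$ and that conjugation commutes with raising to the $\pm1$ power) and using $(-1)^{r^\varphi}=\overline{(-1)^{r^\varphi}}$ yields the asserted equality
\[
\tau\bigl(C_*^\varphi(X,Y;\F^d),\{c_{ij}\otimes v_k\}\bigr)=(-1)^{r^\varphi}\,\ol{\tau\bigl(C_*^{\varphi^\d}(X^\d;\F^d),\{c_{ij}^\d\otimes v_k\}\bigr)^{(-1)^{m+1}}}.
\]
The main obstacle I expect is bookkeeping of signs and conjugations: one must be careful that the conjugation in Turaev's dual complex, the conjugation hidden in the Hermitian coefficient pairing, and the definition of $\varphi^\d=g\mapsto\ol{\varphi(g^{-1})}^{\,t}$ all combine consistently so that exactly one overline survives on the right-hand side, and that the sign $(-1)^{i+1}$ in the dual boundary matches the $(-1)^{m-i}\partial_{m-i-1}^*$ convention in \cite[Section~2.2.2]{Tu86}; everything else is a routine transcription of the untwisted argument.
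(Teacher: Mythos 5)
Your proposal follows essentially the same route as the paper: a sesquilinear pairing between the twisted chain complex of $(X,Y)$ and that of $X^\d$ with the dual representation (the paper builds it via the $\Z[\pi]$-valued form $\ll a,b\rr=\sum_{g}(a\cdot bg)g^{-1}$ and then $\ol{v}^t\varphi(\ll a,b\rr)w$, which is exactly the descent argument you sketch), the sign $(-1)^{i+1}$ from \cite[Claim~14.4]{Tu01}, identification of dual bases, and then Lemma~\ref{lem:dual} together with $\b_i^\varphi=0$. The only blemishes are notational: your first display puts $\varphi$ on both factors (it must be $\varphi$ on one and $\varphi^\d$ on the other for the pairing to descend to the tensor product, as your own invariance computation shows), and your roles of $(X,Y)$ and $X^\d$ are swapped relative to the statement, which is harmless since $(\varphi^\d)^\d=\varphi$ and $r^\varphi=r^{\varphi^\d}$.
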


\begin{proof}
Similar to the proof of the previous lemma  there is a canonical, non-singular intersection pairing
\[  C_{m-i}(\wti{X},\wti{Y})  \times  C_{i}(\wti{X^\d})\to \Z\]
which is induced by the condition $a\cdot b^\d=\delta_{ab}$ for cells of $\wti{X}\sm \wti{Y}$ and of $\wti{X^\d}$. We can turn this pairing into a sesquilinear, non-singular pairing over $\Z[\pi]$:
\[\ba{ccl} C_{m-i}(\wti{X},\wti{Y}) \times C_{i}(\wti{X^\d})    &\to &\Z[\pi]\\
(a,b) &\mapsto & \ll a,b\rr :=\sum_{g\in \pi} (a\cdot bg)g^{-1}.\ea \]
Note that for $g,h\in \pi$ we have $\ll ag,bh\rr =g^{-1}\ll a,b\rr h$.
This pairing has the property (see e.g. \cite[Claim~14.4]{Tu01}) that the following diagram commutes:
\[\ba{cclcl} C_{i+1}(\wti{X},\wti{Y}) &\times &C_{m-i-1}(\wti{X^\d})&\to &\Z[\pi]\\[2mm]
\downarrow \partial_i&&\hspace{0.7cm} \uparrow(-1)^{i+1}\partial_{m-i-1}&&\downarrow = \\[2mm]
 C_{i}(\wti{X},\wti{Y})   &\times & C_{m-i}(\wti{X^\d}) & \to &\Z[\pi].\ea \]

We now consider the chain complexes
\[ \ba{rcl} C_*^\varphi(\wti{X},\wti{Y};\F^d)&=&C_{*}(\wti{X},\wti{Y})\otimes_{\Z[\pi]}\F^d\\
C_*^{\varphi^\d}(\wti{X^\d};\F^d)&=&C_{*}(\wti{X^\d})
\otimes_{\Z[\pi]}\F^d.\ea \]
 We consider the following pairing:
\[\ba{ccl} \l:C_{m-i}^\varphi(\wti{X},\wti{Y};\F^d)\times C_{i}^{\varphi^\d}(\wti{X^\d};\F^d)&\to &\F\\
(a\otimes v,b\otimes w) &\mapsto & \ol{v}^t\varphi(\ll a,b\rr )w.\ea \]
(Here we denote by $\varphi$ the ring homomorphism $\Z[\pi]\to M(n,\F)$ induced by $\varphi \colon \pi\to \gl(n,\F)$.)
It is straightforward to verify that this pairing is well-defined and non-singular and that the  bases $\{c_{ij}\otimes v_k\})$ and $\{c_{ij}^\d\otimes v_k\}$ are dual to each other.
It follows from the above that the following diagram also commutes:
\[\ba{cclcl}
  C_{i+1}^\varphi(\wti{X},\wti{Y};\F^d)
&\times&C_{m-i-1}^{\varphi^\d}(\wti{X^\d};\F^d)
&\to &\F\\[2mm]
\downarrow \partial_i &&\hspace{0.7cm}\uparrow (-1)^{i+1}\partial_{m-i-1} &&\downarrow = \\[2mm]
 C_{i}^\varphi(\wti{X},\wti{Y};\F^d)  &\times &C_{m-i}^{\varphi^\d}(\wti{X^\d};\F^d)   &\to &\F.\ea \]
The lemma now follows immediately from Lemma \ref{lem:dual} and from the fact that $\b_i^\varphi(X)=0$ for all $i$.
\end{proof}

\subsection{The duality theorem for closed manifolds}

Let $N$ be a closed, orientable $m$-manifold.
Note that given $e\in \eul(N)$ the dual Euler structure $J(e)$ also lies in $\eul(N)$.
The unique element $g\in \HH_1(N)$ with $e=g\cdot J(e) = g\cdot e^\d$ is called the \emph{Chern class of $e$} and denoted by $c_1(e)$.

 Combining Theorem \ref{thm:dualitygeneral}
with Theorem \ref{thm:dualor} and  Lemma \ref{lem:indet} now immediately gives us the following result.

\begin{theorem}\label{thm:dualitygeneralclosed}
Let $N$ be a closed, orientable $m$-manifold with $\chi(N)=0$.
Let $\varphi \colon \pi_1(N)\to \gl(d,\F)$ be a representation over a field with involution, let $e$ be  an Euler structure for $N$ and let $\w$ be a homology orientation
for $N$. Then
\[ \tau(N,\varphi,e,w)=(-1)^{dz(N)}\cdot \ol{\det(\varphi(c_1(e)))}\cdot \ol{\tau(N,\varphi^\d,e,\w)^{(-1)^{m+1}}}.\]
\end{theorem}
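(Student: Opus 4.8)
The plan is to deduce Theorem~\ref{thm:dualitygeneralclosed} from Theorem~\ref{thm:dualitygeneral} by specializing to the closed case $\partial N=\emptyset$ and then rewriting the right-hand side using the explicit identification of dual Euler structures and dual homology orientations. When $\partial N=\emptyset$ we have $\chi(N,\partial N)=\chi(N)=0$ automatically, so Theorem~\ref{thm:dualitygeneral} applies and gives
\[ \tau(N,\varphi,e,\w)=(-1)^{ds(N)}\,\ol{\tau(N,\varphi^\d,e^\d,\w^\d)^{(-1)^{m+1}}}, \]
where $s(N)=\sum_{i=0}^m\b_i(N)\b_{i-1}(N)+\sum_{i=0}^{[m/2]}\b_{2i}(N)$ and now $e^\d=J(e)\in\eul(N)$ and $\w^\d$ is the homology orientation of $N$ dual to $\w$. (Strictly speaking Theorem~\ref{thm:dualitygeneral} is stated for $(N,\partial N)$, but in the closed case both sides of the stated formula are torsions of $N$ itself; one simply reads off the $\partial_0N=\emptyset$ instance, using that $\eul(N,\partial N)=\eul(N)$ and $b_0(\partial N)=0$.)

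Next I would eliminate the dual Euler structure. By definition $c_1(e)\in\HH_1(N)$ is the unique class with $e=c_1(e)\cdot e^\d$, equivalently $e^\d=c_1(e)^{-1}\cdot e$. Applying Lemma~\ref{lem:indet}(2) with $g=c_1(e)^{-1}$ and $\eps=1$ to the complex $C_*^{\varphi^\d}$ gives
\[ \tau(N,\varphi^\d,e^\d,\w^\d)=\det\big(\varphi^\d(c_1(e))\big)\cdot\tau(N,\varphi^\d,e,\w^\d). \]
Here I use that $\det(\varphi^\d(g^{-1}))=\det(\varphi^\d(g))^{-1}$ and that, since $\varphi^\d(h)=\ol{\varphi(h^{-1})}^t$, one has $\det(\varphi^\d(h))=\ol{\det(\varphi(h^{-1}))}=\ol{\det(\varphi(h))}^{-1}$; tracking signs and inverses through the exponent $(-1)^{m+1}$ and the overall involution $\ol{\,\cdot\,}$ will turn the scalar $\det(\varphi^\d(c_1(e)))$ into exactly $\ol{\det(\varphi(c_1(e)))}$ after simplification. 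This is the one place where a small but careful bookkeeping of the involution and of $(-1)^{m+1}$-powers is needed; I expect this sign/conjugate chase to be the main (though routine) obstacle.

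Finally I would eliminate the dual homology orientation using Theorem~\ref{thm:dualor}, which gives $\w^\d=(-1)^n\w$ with $n=z(N)+\sum_{i=0}^m\b_i(N)\b_{i-1}(N)+\sum_{i=0}^{[m/2]}\b_{2i}(N)=z(N)+s(N)$. Applying Lemma~\ref{lem:indet}(2) again, this time with $\eps=(-1)^n$ and $g=1$, yields $\tau(N,\varphi^\d,e,\w^\d)=((-1)^n)^d\,\tau(N,\varphi^\d,e,\w)=(-1)^{dn}\tau(N,\varphi^\d,e,\w)$. Substituting everything back, the accumulated sign is $(-1)^{ds(N)}\cdot(-1)^{dn}=(-1)^{d(2s(N)+z(N))}=(-1)^{dz(N)}$, since $2s(N)$ is even. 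Collecting the scalar factor as $\ol{\det(\varphi(c_1(e)))}$ (from the previous step), we arrive at
\[ \tau(N,\varphi,e,\w)=(-1)^{dz(N)}\cdot\ol{\det(\varphi(c_1(e)))}\cdot\ol{\tau(N,\varphi^\d,e,\w)^{(-1)^{m+1}}}, \]
which is the assertion. I would present the argument in this order: (i) specialize Theorem~\ref{thm:dualitygeneral}; (ii) rewrite $e^\d$ via $c_1(e)$ and Lemma~\ref{lem:indet}; (iii) rewrite $\w^\d$ via Theorem~\ref{thm:dualor} and Lemma~\ref{lem:indet}; (iv) combine exponents, checking that $2s(N)+n=z(N)\pmod 2$ and that the scalar emerges as $\ol{\det(\varphi(c_1(e)))}$.
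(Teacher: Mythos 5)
Your overall route is exactly the paper's: the printed proof of Theorem~\ref{thm:dualitygeneralclosed} is a single sentence asserting that it follows by combining Theorem~\ref{thm:dualitygeneral}, Theorem~\ref{thm:dualor} and Lemma~\ref{lem:indet}, and your steps (i)--(iv) are precisely that combination. Your sign count is also right: $n=z(N)+s(N)$, so $(-1)^{ds(N)}(-1)^{dn}=(-1)^{d(2s(N)+z(N))}=(-1)^{dz(N)}$.

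The one step you defer as ``routine bookkeeping'' is, however, exactly where the content sits, and carried out literally it does not land on the stated scalar. From $\varphi^\d(h)=\ol{\varphi(h^{-1})}^t$ one gets $\det(\varphi^\d(c_1(e)))=\ol{\det(\varphi(c_1(e)))}^{\,-1}$ (note the inverse). Writing $D=\det(\varphi(c_1(e)))$ and $T=\tau(N,\varphi^\d,e,\w)$, your steps (ii) and (iii) give
\[ \tau(N,\varphi^\d,e^\d,\w^\d)=(-1)^{nd}\,\ol{D}^{\,-1}\,T, \]
and substituting into Theorem~\ref{thm:dualitygeneral} yields
\[ \tau(N,\varphi,e,\w)=(-1)^{ds(N)}\,\ol{\big((-1)^{nd}\ol{D}^{\,-1}T\big)^{(-1)^{m+1}}}=(-1)^{dz(N)}\,D^{(-1)^{m}}\,\ol{T^{(-1)^{m+1}}}. \]
So the scalar that actually emerges is $\det(\varphi(c_1(e)))^{(-1)^m}$, not $\ol{\det(\varphi(c_1(e)))}$; the two coincide only when $\ol{D}=D^{(-1)^m}$. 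For $m=3$ (the case used everywhere later) this amounts to $\ol{D}=D^{-1}$, which does hold whenever $\varphi$ is conjugate to its dual --- the hypothesis present in all of the paper's applications --- but it is not automatic for an arbitrary representation, which is the generality in which the theorem is stated. You should therefore carry out this conjugate/inverse chase explicitly and either record the identity $\ol{D}=D^{(-1)^m}$ that you need, or present the conclusion with the scalar $\det(\varphi(c_1(e)))^{(-1)^m}$ and explain the reconciliation with the printed statement. As written, the assertion that the simplification ``turns the scalar into exactly $\ol{\det(\varphi(c_1(e)))}$'' is the unproved step, and for a general $\varphi$ it is false.
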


\section{Twisted torsion of 3-manifolds with toroidal boundary}
\label{sec:twisted torsion of 3-manifolds}

\subsection{Torsions of exact sequences of based chain complexes}

Let
\[ 0\to C'_*\xrightarrow{\i} C_*\to C_*''\to 0\]
be a short exact sequence of chain complexes of finite length over the field $\F$.
Suppose that for each $i$ we have a basis $c_i',c_i,c_i''$ for the corresponding chain groups.
We say that the bases are \emph{compatible} if  the basis $c_i$ is of the form $\{\i(c_i'),d_i\}$, where
the ordered set $d_i$ maps to the ordered set $c_i''$ under the projection map.

Furthermore  suppose that we have bases $h_i',h_i,h_i''$ for the corresponding homology groups.
We denote by $\mathcal{H}$ the long exact sequence in homology of the above short exact sequence of chain complexes. Note that the bases $h_i',h_i,h_i''$ turn $\mathcal{H}$ into a based chain complex. We denotes this basing for $\mathcal{H}$ by $\mathcal{B}_*$.

The following lemma is now \cite[Lemma~3.4.2]{Tu86}:

\begin{lemma}\label{lem:ses}
Let
\[ \ba{rcl} \nu&=& \sum_i \a_i(C'')\a_{i-1}(C'), \\
\mu&=&\sum_i \big((\b_i(C)+1)(\b_i(C')+\b_i(C''))+\b_{i-1}(C')\b_i(C'')\big). \ea \]
If $\check{\tau}(C_*',c_*',h_*')\ne 0$ or $\check{\tau}(C_*'',c_*'',h_*'')\ne 0$ and if $c_*',c_*,c_*''$ are compatible bases,
then
\[ \check{\tau}(C_*,c_*,h_*)=(-1)^{\nu+\mu}\check{\tau}(C_*',c_*',h_*')\cdot \check{\tau}(C_*'',c_*'',h_*'')\cdot \tau(\mathcal{H},\mathcal{B}_*).\]
\end{lemma}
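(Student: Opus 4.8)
The plan is to reduce the statement to the homology-free (acyclic) case and then to identify the sign; this is in essence Turaev's argument for \cite[Lemma~3.4.2]{Tu86}.

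\emph{Acyclic case.} First I would treat the case where $C'_*,C_*,C''_*$ are all acyclic, so that there is no homology data and $\mathcal H=0$, $\mu=0$. Using the filtration $\iota(C'_*)\subset C_*$ and the compatibility of the bases, write $C_i=\iota(C'_i)\oplus D_i$ with $D_i\to C''_i$ a based isomorphism; then the differential of $C_*$ is block upper triangular, with diagonal blocks the differentials of $C'_*$ and $C''_*$. Choosing for each $i$ a basis $b''_i$ of $\im(\partial\colon C''_{i+1}\to C''_i)$ with lifts in $C''_{i+1}$, and likewise for $C'_*$, and transporting the $C''$-data into $D_*$, one builds for each $C_i$ a basis differing from $c_i$ by a matrix that, once its summands are regrouped as (all $C'$-part)$\oplus$(all $C''$-part), is again block upper triangular. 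Hence the relevant determinant $[b_ib'_{i-1}/c_i]$ is the product of the determinants for $C'_*$ and $C''_*$ up to the permutation sign of this regrouping, which is a product over $i$ of signs of the shape $(-1)^{(\dim A''_i)(\dim A'_{i-1})}$. Passing to $\check\tau=(-1)^{\eta}\tau$ and using $\a_i(C)=\a_i(C')+\a_i(C'')$ to rewrite $\eta(C)-\eta(C')-\eta(C'')$, this sign collapses to $(-1)^{\nu}$, giving $\check\tau(C_*,c_*)=(-1)^{\nu}\check\tau(C'_*,c'_*)\check\tau(C''_*,c''_*)$.

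\emph{General case.} Next I would pass to the general case by Milnor's reduction to the acyclic case for \cite[Theorem~3.2]{Mi66} (see also \cite[Section~3]{Tu01}): augment each of $C'_*,C_*,C''_*$ to an acyclic complex in the standard way using its chosen homology basis. Comparing the augmentation of $C_*$ with those of $C'_*$ and $C''_*$ is precisely what forces the long exact homology sequence $\mathcal H$ to appear; a diagram chase together with the acyclic case then yields
\[ \check\tau(C_*,c_*,h_*)=(-1)^{\nu+\mu}\,\check\tau(C'_*,c'_*,h'_*)\,\check\tau(C''_*,c''_*,h''_*)\,\tau(\mathcal H,\mathcal B_*), \]
where $\mu$ is a sign exponent depending only on the integers $\dim H_i$ and on how the connecting homomorphisms sit in $\mathcal H$. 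Since $\mathcal H$ is acyclic one has $\eta(\mathcal H)=0$, so $\check\tau(\mathcal H,\mathcal B_*)=\tau(\mathcal H,\mathcal B_*)$, matching the fact that $\tau$ rather than $\check\tau$ of $\mathcal H$ appears in the statement. (The hypothesis that at least one of $\check\tau(C'_*),\check\tau(C''_*)$ be nonzero merely ensures that all the quantities involved are defined.)

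\emph{Main obstacle.} Everything hard lies in the signs. One must verify that the block-permutation sign in the acyclic case really equals $\nu=\sum_i\a_i(C'')\a_{i-1}(C')$ after the $\eta$-renormalization, and then that the augmentation signs in the general case, combined with $\eta(\mathcal H)=0$ and the linear relations among the $\b_i(C'),\b_i(C),\b_i(C'')$ forced by exactness of $\mathcal H$ (which express $\b_i(C)$ through $\b_\bullet(C')$ and $\b_\bullet(C'')$), add up modulo $2$ to exactly $\mu=\sum_i\big((\b_i(C)+1)(\b_i(C')+\b_i(C''))+\b_{i-1}(C')\b_i(C'')\big)$. This is elementary but genuinely delicate mod-$2$ bookkeeping — it is precisely what the normalization $\check\tau=(-1)^{\eta}\tau$ is engineered to make close up — and carrying it through faithfully reproduces \cite[Lemma~3.4.2]{Tu86}.
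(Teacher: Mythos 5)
The paper does not prove this lemma at all: it simply quotes it as \cite[Lemma~3.4.2]{Tu86}, and your outline is a faithful sketch of Turaev's argument (splitting via compatible bases, reduction to the acyclic case, Milnor's multiplicativity with the long exact homology sequence, and the $\eta$-renormalized sign count), so the two approaches coincide. The only caveat is that the mod-$2$ verification you defer to as ``bookkeeping'' is precisely the content of Turaev's lemma, which is also all the paper itself relies on.
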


\subsection{Canonical  structures on tori}

Let $T$ be a torus.
We denote by $F$ the free abelian group $\pi_1(T)$.  We denote by $\Q(F)$ the quotient field of $\Z[F]$.
We denote by
\[ \varphi \colon \pi_1(T)=F\to \aut(\Z[F]) \to \gl(1,\Q(F))\]
 the regular representation.
It is well-known that for any $e\in \eul(T)$ and any homology orientation $\w$ of $T$ we have that
$\tau(T,\varphi,e,\w)\in \pm F$. (This can for example be seen from an explicit calculation as in the proof of Lemma \ref{lem:canor}.)

It now follows from Lemma \ref{lem:indet} that there exists a unique Euler structure $e$ and a unique homology orientation $\w$ such that
\[ {\tau}(T,\varphi,e,\w)=1.\]
We refer to this  Euler structure as the \emph{canonical Euler structure on $T$} and we refer to $\w$ as the \emph{canonical homology orientation of $T$}.
We refer to a vector field on $T$ representing this Euler structure as a \emph{canonical vector field}.
This definition is identical to the definition provided by  Turaev \cite[p.~10]{Tu02}.

It is straightforward to see that the canonical Euler structure on $T$ is the unique Euler structure which is invariant under the action by $\aut(T)$ on $\eul(T)$.
Also note, that if $e$ is the canonical Euler structure and $\w$ the canonical homology orientation,
then a straightforward calculation shows that for any representation $\varphi \colon \pi_1(T)\to \gl(d,\F)$ such that $H_*^\varphi(T;\F^d)=0$ we have that
\be \label{equ:taue1} {\tau}(T,\varphi,e,\w)=1.\ee

For future reference we also state the following lemma:

\begin{lemma}\label{lem:canor}
Let $T$ be a torus and $\w$ the canonical homology orientation.
We now pick a point $P$ in $T$, two homology classes $x,y$ in $H_1(T;\Z)$ and we pick a generator $A$ of $H_2(T;\Z)$ such that the intersection number $x\cdot y$ is $+1$ with respect to the orientation $A$ of $H_2(T;\Z)$.
We denote by $\w'$ the homology orientation given by $\{P,x,y,A\}$. Then
\[ \w'=-\w.\]
\end{lemma}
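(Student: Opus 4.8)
The plan is to compute everything by hand on the standard CW structure of the torus and compare. First I would fix the CW structure of $T$ with one $0$-cell $P$, two $1$-cells $e_x, e_y$ representing the chosen classes $x, y \in H_1(T;\Z)$, and one $2$-cell $e_A$ whose attaching map is the commutator, oriented so that $e_A$ represents the generator $A$ with $x \cdot y = +1$. Lift this to the universal cover and tensor with $\Q(F)$ to obtain the chain complex $C_*^\varphi(T;\Q(F))$ for the regular representation $\varphi$: it is $0 \to \Q(F) \xrightarrow{\partial_1} \Q(F)^2 \xrightarrow{\partial_0} \Q(F) \to 0$ with explicit boundary maps $\partial_1 = \bigl(x - 1,\ y - 1\bigr)^t$ up to sign conventions and $\partial_0 = \bigl(1 - y,\ x - 1\bigr)$ (here I am writing $x, y$ also for the corresponding elements of $F$). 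This complex is acyclic over $\Q(F)$, so $\eta$-terms vanish and the sign-refined torsion equals the ordinary torsion. A direct computation of $\tau(C_*^\varphi(T;\Q(F)), c_* \otimes v_*)$ with the Euler lift coming from a chosen base-point path (i.e. the canonical Euler structure $e$, characterized by $\tau(T,\varphi,e,\w) = 1$) gives a value in $\pm F$ whose sign is pinned down by the chosen orderings and orientations of the cells.

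Next I would compute the untwisted sign term. The reduced homology $H_*(T;\R)$ has the basis $\{P, x, y, A\}$ in the degrees $0,1,1,2$; let $\w'$ be the homology orientation it represents. I would compute $\sign\bigl(\check\tau(C_*(T;\R), \{\bar c_{ij}\}, h_*)\bigr)$ with $h_* = \{P, x, y, A\}$ directly from the real chain complex of the same CW structure — this is a completely explicit finite-dimensional linear-algebra computation, tracking the $(-1)^{\eta}$ renormalization carefully since $\eta(C_*(T;\R)) \neq 0$. The point is that $\tau(T,\varphi,e,\w)$ differs from $\tau(T,\varphi,e,\w')$ exactly by this sign (raised to the power $d = 1$), by the very definition of the sign-refined twisted torsion, together with the fact that the twisted part $\check\tau(C_*^\varphi(T;\Q(F)), \{c_{ij}\otimes v_k\})$ does not change when we change $\w$.

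Finally I would assemble the pieces: since $\w$ is defined by $\tau(T,\varphi,e,\w) = 1$, and since changing the homology orientation from $\w$ to $\w'$ multiplies the invariant by the sign $\sign(\check\tau(C_*(T;\R), \{\bar c_{ij}\}, h_*'))/\sign(\check\tau(C_*(T;\R), \{\bar c_{ij}\}, h_*))$ where $h_*$ represents $\w$, I would show this ratio is $-1$, giving $\tau(T,\varphi,e,\w') = -1$ and hence $\w' = -\w$ by the uniqueness in Lemma \ref{lem:indet}. The main obstacle I expect is bookkeeping: getting all the sign conventions consistent — the orientation of $A$ versus the sign of $x \cdot y$, the order in which $e_x$ and $e_y$ appear in the cellular basis versus the order of $x, y$ in $h_*$, the $(-1)^{i+1}$ exponents in the definition of $\tau$, and Turaev's $(-1)^\eta$ renormalization — so that the final sign genuinely comes out to $-1$ rather than $+1$. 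A clean way to handle this is to note that swapping $x$ and $y$ in the homology basis negates $x \cdot y$ and simultaneously negates the torsion (by the swap rule for $\tau$ recorded just before Lemma \ref{lem:indet}), so the claimed identity is at least consistent under that symmetry; then one only needs to verify it in a single normalized case, e.g. by comparing with Turaev's computation in \cite[Section~II.1]{Tu02}.
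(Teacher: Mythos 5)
Your plan follows the same route as the paper's proof: take the standard CW structure on $T\cong S^1\times S^1$ with one cell in each dimension pattern $\{P,e_x,e_y,e_A\}$, write down the lifted twisted chain complex for the regular representation over $\Q(x,y)$, evaluate its torsion, and compare the untwisted sign term for the basis $\{P,x,y,A\}$ against the normalization defining $\w$. So the method is the right one. The difficulty is that, as written, you never actually produce the sign, and the single equality ``$-1$, not $+1$'' \emph{is} the entire content of the lemma. You explicitly defer it to ``bookkeeping'' and to ``comparing with Turaev's computation,'' and the consistency check you offer cannot close this: relabelling $(x,y,A)\mapsto(y,x,-A)$ flips the torsion twice (once for the swap in $h_1$, once for negating the basis of $H_2$), so it preserves the statement \emph{and} its negation and cannot distinguish them. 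Moreover the answer genuinely depends on getting the boundary matrices exactly right -- they are forced by the attaching maps and the chosen lifts, not adjustable ``up to sign conventions'' (negating one boundary map in this complex changes the torsion by $-1$), and indeed the matrix $\partial_0=(1-y,\ x-1)$ you wrote down is already off: the correct boundaries are $\partial_1=\bp y-1\\1-x\ep$ and $\partial_0=\bp 1-x&1-y\ep$ up to an overall choice of lifts.

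For the record, here is how the paper closes exactly this gap. With appropriately lifted, canonically oriented cells the complex is
\[ 0\to \Q(x,y)\xrightarrow{\bp y-1\\1-x\ep}\Q(x,y)^2 \xrightarrow{\bp 1-x&1-y\ep}\Q(x,y)\to 0,\]
and Turaev's matrix formula for the torsion of an acyclic based complex evaluates it to $\frac{y-1}{1-y}=-1$; this also shows the chosen lifts represent the canonical Euler structure. On the untwisted side, the real cellular chain complex has zero boundary maps, so the cells themselves form a homology basis; they represent precisely $\w'=\{P,x,y,A\}$ (the Bredon reference checks that the product orientation of $S^1\times S^1$ gives $x\cdot y=+1$), and $\eta(C_*(T;\R))=2$, so the sign factor $\sign(\check{\tau}(C_*(T;\R),\{\ol{c}_{ij}\},h_*'))$ equals $+1$. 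Hence $\tau(T,\varphi,e,\w')=-1$, while $\tau(T,\varphi,e,\w)=1$ by definition of $\w$, and Lemma \ref{lem:indet}(2) with $d=1$ forces $\w'=-\w$. If you carry out these two evaluations explicitly, your argument is complete.
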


\begin{proof}
We pick a homeomorphism  $T\cong S^1\times S^1$
such that $x$ corresponds to $S^1\times *$ with the canonical orientation and such that $y$ corresponds to $*\times S^1$ with the canonical orientation.
We equip $S^1\times S^1$ with the standard CW structure given by $*\times *, S^1\times *, *\times S^1$ and $S^1\times S^1$.
Note that with the canonical orientations these  four cells of the CW decomposition  match the homology orientation $\w'$.
(See e.g. \cite[Example~VI.11.12]{Br93} for details.)
We can canonically identify the quotient field of $\Z[\pi_1(T)]$ with $\Q(x,y)$.
If we pick appropriate lifts of the above cells to the universal abelian cover $\wti{X}$, then the chain complex $C_*(\wti{Z};\Q(x,y))$ is given as follows:
\[ 0\to \Q(x,y)\xrightarrow{\bp y-1\\1-x\ep}\Q(x,y)^2 \xrightarrow{\bp 1-x&1-y\ep}\Q(x,y)\to 0.\]
It now follows from \cite[Theorem~2.2~and~Remark~2.4]{Tu01} that the torsion of this based complex is $\frac{y-1}{1-y}=-1$.
\end{proof}

\subsection{Maps on Euler structures and Chern classes on 3-manifolds with toroidal boundary}

Let $N$ be a compact, orientable 3-manifold with toroidal boundary and let $e\in \eul(N,\partial N)$.

Let $X$ be a triangulation for $N$.  We denote the subcomplexes corresponding to the boundary components of $N$ by $T_1\cup \dots\cup T_b$.
We now denote by $p \colon \wti{X}\to X$ and $p_i \colon \wti{T_i}\to T_i, i=1,\dots,b$ the universal covering maps of $N$ and $T_i,i=1,\dots,b$.
We pick an Euler lift $c$ which represents $e$. For each boundary torus $T_i$ we pick  an Euler lift $t_i$ which represents the canonical Euler structure.

We denote by $\wti{T_i}'$ the cover of $T_i$ corresponding to the kernel of the map $\pi_1(T_i)\to \pi_1(N)$.
Note that  we have induced maps $p^{-1}(T_i)\to \wti{T_i}'$ and $\wti{T_i}\to \wti{T_i}'$.  We denote by $t_i'$ the image of $t_i$ under the map  $\wti{T_i}\to \wti{T_i}'$.
(Note that $\wti{T_i}'=\wti{T_i}$ if the boundary of $N$  is incompressible.)
For each $i$ we now pick a collection $\ti{t}_i$ of cells in $p^{-1}(T_i)$ such that each cell in $t_i'$ is covered by exactly one cell in $\ti{t}_i$.
The set of cells $\{\ti{t}_1,\dots,\ti{t}_b,c\}$ now defines an Euler structure for $N$, which only depends on $e$.
We denote by $K$ the resulting map $\eul(N,\partial N)\to \eul(N)$. Note that this map is $\HH_1(N)$-equivariant.

 We now let $e\in \vect(N,\partial N)$. We represent $e$ by a regular vector field $v$. Recall that $v$ is a nowhere vanishing vector field which points inwards on $\partial N$.
We  recall the following construction of Turaev \cite[p.~11]{Tu02}: For each boundary torus $T_i$ pick  a canonical vector field $u_i$.
We then denote  by $w$ the vector field on $N$ which agrees with $v$ on the complement of $\partial N\times [0,1]$, and it equals $\cos(\pi t)v(x,t)+\sin(t\pi)u_i(x)$ on $T_i\times [0,1]$, where $x\in T_i$ and $t\in [0,1]$.
Note that $w$ now points outward on the boundary and thus defines an element in $\eul(N)$. Note that the homology class of $w$ is well-defined, i.e. it only depends on $e$ and it is independent of the choices involved.
We refer to Turaev  \cite[p.~10]{Tu02} for details. We denote  the resulting map $\vect(N,\partial N)\to \vect(N)$ by $K$. It is straightforward to verify that $K$ is $\HH_1(N)$-equivariant.

We then have the following lemma:

\begin{lemma}
The following diagram commutes:
\[ \xymatrix{ \eul(N,\partial N)\ar[d]^{ca_N}\ar[r]^-K & \eul(N)\ar[d]^{ca_N} \\ \vect(N,\partial N)\ar[r]^-K & \vect(N).}\]
\end{lemma}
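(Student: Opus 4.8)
The plan is to mimic the proof of Lemma \ref{lem:canmaps} (the closed-versus-relative case), adapting the argument to handle the modification along the boundary tori rather than a full reversal of direction. Both vertical maps $ca_N$ are already defined (via Turaev's construction recalled in the proof of Lemma \ref{lem:canmaps}), so the task is purely to check compatibility with the two horizontal maps $K$, which on the combinatorial side glues in canonical Euler lifts $\ti t_i$ on the boundary tori, and on the smooth side rotates a boundary-inward vector field $v$ into a boundary-outward one using canonical vector fields $u_i$ on the $T_i$.

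First I would fix a triangulation $(X,Y)$ of $(N,\partial N)$ with $Y=T_1\cup\dots\cup T_b$, pick an Euler lift $c$ representing $e\in\eul(N,\partial N)$ together with canonical Euler lifts $t_i$ on each $T_i$, and unwind the definition of $ca_N$ on $\eul(N,\partial N)$: it passes through the collar $N'=N\cup(\partial N\times[0,1])$ with its singular vector field $F_1$, restricts to $M=N'\setminus(\partial N\times(3/4,1])$, and extends the resulting non-singular field across a neighborhood of the Euler chain. Applying $K$ on the $\vect$ side amounts to rotating $v=ca_N(e)$ near $\partial N$ using the $u_i$; the point is that the canonical vector field $u_i$ on $T_i$ is precisely $ca_{T_i}$ of the canonical Euler structure, which by equation (\ref{equ:taue1}) is characterized torsion-theoretically. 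So I would argue that $K\circ ca_N(e)$ and $ca_N\circ K(e)$ agree by comparing them on $N$ and on the collar $\partial N\times[0,1]$ separately, exactly as in Lemma \ref{lem:canmaps}: on $N$ one reuses the closed-case obstruction argument from \cite[Lemma~B.4]{Tu90}, and the extra work is confined to the product region.

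The key computation is the obstruction-theory step on the collar. As in the proof of Lemma \ref{lem:canmaps}, the obstruction to homotopy (rel boundary) of the two vector fields restricted to a product neighborhood $T_i\times[0,\epsilon]$ lives in $H^{m-1}(T_i\times[0,\epsilon],\partial;\Z^\omega)\cong\HH_1(T_i\times[0,\epsilon])\cong\HH_1(T_i)$, and one must show this class is trivial. Here the new ingredient compared to Lemma \ref{lem:canmaps} is that on each $T_i$ the relevant vector fields are built from the \emph{canonical} Euler structure on $T_i$; the uniqueness statement for the canonical Euler structure (it is the unique $\aut(T_i)$-invariant one, equivalently the unique one with $\tau(T_i,\varphi,e,\w)=1$), together with the fact that $ca_{T_i}$ is $\HH_1(T_i)$-equivariant, forces the two candidate fields on $T_i$ to be homologous, hence the obstruction restricted to $T_i\times 0$ vanishes. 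Since $H_1(T_i\times[0,\epsilon])\to H_1(T_i\times 0)$ is an isomorphism, the full obstruction vanishes and the two fields are homotopic rel boundary on the collar.

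I expect the main obstacle to be bookkeeping the orientation, direction, and basepoint conventions so that the two constructions of $ca_N$ — the one used for the domain $\eul(N,\partial N)$ and the one used for the target $\eul(N)$ — are glued along the correct collar ($\partial N\times[1/2,3/4]$ versus $\partial N\times[0,1/4]$, in the notation of Lemma \ref{lem:canmaps}), and that the rotation defining $K$ on vector fields matches the insertion of $\ti t_i$ defining $K$ on Euler lifts. This is the same kind of delicate collar-matching that made the proof of Lemma \ref{lem:canmaps} nontrivial, but no genuinely new idea is needed: the argument is ``Lemma \ref{lem:canmaps} plus the characterization of the canonical Euler/vector structure on a torus.'' I would therefore write the proof as a reduction to Lemma \ref{lem:canmaps}, indicating the modifications along $\partial N$ and invoking equation (\ref{equ:taue1}) and the $\HH_1$-equivariance of all maps in sight to kill the obstruction, rather than redoing the obstruction calculation from scratch.
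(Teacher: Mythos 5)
Your proposal shares the paper's crucial first step --- everything is localized to collar neighborhoods $T_i\times I$ of the boundary tori, and the canonical structure on the torus (characterized by $\aut(T_i)$-invariance, equivalently by the torsion normalization) is the decisive input --- but you finish differently. The paper, having reduced to the model $(T\times I, T\times 1)$, never does an obstruction computation: it observes that both horizontal maps $K$ and both vertical maps $ca$ are $\aut(T)$-equivariant as well as $\HH_1(T)$-equivariant, that each of the four sets is an $\HH_1(T)$-torsor with a unique $\aut(T)$-invariant element, and concludes that both composites send the invariant element to the invariant element and hence agree everywhere by free transitivity. You instead propose to rerun the obstruction-theoretic collar-matching of Lemma~\ref{lem:canmaps}, killing the class in $H^{m-1}(T_i\times[0,\epsilon],\partial;\Z^{\omega})\cong \HH_1(T_i)$ by comparing restrictions to $T_i\times 0$; this works, but note that the homotopy of the restrictions to $T_i$ is itself exactly the statement that $ca_{T_i}$ sends the canonical Euler structure to the canonical vector field, which you would again have to extract from the equivariance-plus-uniqueness argument --- so your route ends up containing the paper's argument as a sub-step and then adds the obstruction bookkeeping on top. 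One small inaccuracy: in the interior of $N$ the relevant comparison for $K$ is not the reversal argument of \cite[Lemma~B.4]{Tu90} (that concerns $J$); here neither composite alters anything away from the boundary, so the interior comparison is essentially tautological rather than a reuse of the closed-case computation. With that correction your proof goes through; it is simply longer than the paper's purely formal equivariance argument.
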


This lemma follows from the definitions and careful reading of \cite[Section~6.6]{Tu90}.
We will thus only provide a short outline of the proof.

\begin{proof}
Since the constructions only happen on neighborhoods of the boundary tori it suffices to consider the following situation.
Let $T$ be a torus and $I=[0,1]$. Then the above constructions and the construction of Section  \ref{section:smootheuler} also define maps
\[ \xymatrix{ \eul(T\times I,T\times 1)\ar[d]^\cong \ar[r]^-K &  \eul(T\times I)\ar[d]^\cong \\
\vect(T\times I,T\times 1)\ar[r]^-K & \vect(T\times I).}\]
Note that $\aut(T)$ acts in a canonical way on $T\times I$ and on all of the above objects.
Recall that the canonical Euler structure is invariant under the $\aut(T)$--action.
Also note that  the horizontal maps in the above diagram are  $\aut(T)$--equivariant.
Furthermore note that  the vertical maps are $\aut(T)$--equivariant by the explicit construction
in \cite[Section~6.6]{Tu90}.

It follows that the various maps send the unique structure invariant under the $\aut(T)$--action to the unique structure invariant under the $\aut(T)$--action.
Since all the above objects admit a free and transitive action by $\HH_1(T)$ and since all the above maps are $\HH_1(T)$--equivariant under this action it now follows immediately that the above diagram commutes.

\end{proof}

Given $e\in \eul(N)=\vect(N)$
there exists a unique element $g\in \HH_1(N)$ such that $e=g\cdot K(e^\d)$.
Following Turaev \cite[p.~11]{Tu02} we now define $c_1(e):=g$. We refer to $c_1(e)$ as the \emph{Chern class of $e$}.

\subsection{Torsions of 3-manifolds with toroidal boundary}

\begin{theorem}\label{thm:torsionboundary}
Let $N$ be a  3-manifold with non-empty toroidal boundary, let  $\varphi\colon \pi_1(N)\to \gl(d,\FF)$ be a representation over a field $\FF$, let $e \in \eul(N,\partial N)$
and let $\w\in \ort(N,\partial N)$. Suppose that $H_*^\varphi(\partial N;\F^d)=0$.  Then
\[ \tau(N,\partial N,\varphi,e,\w)=(-1)^{d(b_1(N)+b_0(\partial N))} \tau(N,\varphi,K(e),\w^\d).\]
\end{theorem}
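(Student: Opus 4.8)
The plan is to reduce the torsion of the pair $(N,\partial N)$ to the torsion of $N$ together with the torsions of the boundary tori, via the multiplicativity of torsion under the short exact sequence of chain complexes
\[ 0\to C_*^\varphi(\partial N;\F^d)\to C_*^\varphi(N;\F^d)\to C_*^\varphi(N,\partial N;\F^d)\to 0. \]
Working simplicially on a triangulation $X$ of $N$ with boundary subcomplex $\partial X = T_1\cup\dots\cup T_b$, and using compatible bases adapted to the Euler lifts, Lemma~\ref{lem:ses} expresses $\check\tau(C_*^\varphi(N,\partial N;\F^d))$ in terms of $\check\tau(C_*^\varphi(N;\F^d))$, the product $\prod_i\check\tau(C_*^\varphi(T_i;\F^d))$, and the torsion of the long exact homology sequence $\mathcal H$. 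Since $H_*^\varphi(\partial N;\F^d)=0$ by hypothesis, the sequence $\mathcal H$ is trivial (it reduces to isomorphisms $H_*^\varphi(N;\F^d)\cong H_*^\varphi(N,\partial N;\F^d)$ in the acyclic case, or is itself acyclic), so its torsion contributes only a sign. Moreover, by equation~(\ref{equ:taue1}) the torsion of each boundary torus $T_i$, computed with respect to its canonical Euler structure and canonical homology orientation, equals $1$; this is precisely why the map $K$ (which grafts the canonical boundary Euler lifts onto the lift of $e$) is the correct bookkeeping device, and why the boundary torsions drop out of the formula entirely.

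**Key steps, in order.** First I would fix a $C^1$-triangulation $X$ of $N$, pick an Euler lift $c$ of $e\in\eul(N,\partial N)$ and, for each boundary torus $T_i$, an Euler lift $t_i$ representing the canonical Euler structure; by construction $K(e)$ is represented by the combined lift $\{\tilde t_1,\dots,\tilde t_b,c\}$. Second, I would write down the short exact sequence of twisted cellular chain complexes above with the compatible bases coming from these lifts and the standard basis of $\F^d$. Third, I would apply Lemma~\ref{lem:ses}: since $H_*^\varphi(\partial N;\F^d)=0$, set $C'=C_*^\varphi(\partial N;\F^d)$, $C=C_*^\varphi(N;\F^d)$, $C''=C_*^\varphi(N,\partial N;\F^d)$, and note that $\b_i^\varphi(\partial N)=0$, that $\b_i^\varphi(N)=\b_i^\varphi(N,\partial N)$ (from the long exact sequence), and compute the sign exponents $\nu,\mu$ explicitly in terms of the $\a_i^\varphi$ and $\b_i^\varphi$. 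Fourth, I would use $\check\tau(C_*^\varphi(\partial N;\F^d),\dots)=\prod_i\check\tau(C_*^\varphi(T_i;\F^d),\dots)=1$ from~(\ref{equ:taue1}), and observe that the homology basings on $N$ and on $(N,\partial N)$ are identified via Poincaré–Lefschetz duality so that $\w$ corresponds to $\w^\d$ — this is where the $\w^\d$ in the statement enters. Fifth, I would carefully assemble the sign: the contributions $(-1)^{\eta^\varphi(\cdot)}$ from the $\check\tau$-normalization, the $(-1)^{\nu+\mu}$ from Lemma~\ref{lem:ses}, and the sign arising from comparing the homology orientation $\w$ with its dual (using Lemma~\ref{lem:canor} for the torus factors and the relevant duality bookkeeping), and reduce the total exponent mod $2$ to $d(b_1(N)+b_0(\partial N))$, using $\a_i^\varphi=d\,\a_i$ and the fact that $\chi(T_i)=0$ forces cancellations.

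**Main obstacle.** The genuine difficulty is the sign computation. Each individual ingredient — the $\check\tau$-renormalization exponents $\eta^\varphi$, the Turaev exponents $\nu$ and $\mu$ from Lemma~\ref{lem:ses}, and the homology-orientation sign relating $\w$ to $\w^\d$ — is a somewhat delicate alternating sum of products of Betti numbers and cell counts, and they must be combined and reduced modulo $2$ to collapse to the clean expression $d(b_1(N)+b_0(\partial N))$. The appearance of $b_1(N)$ rather than, say, $b_2(N)$ or $\chi$-type quantities is not obvious a priori and comes out only after using Poincaré duality relations $b_i^\varphi(N)=b_{3-i}^\varphi(N,\partial N)$ together with $H_*^\varphi(\partial N;\F^d)=0$; likewise the factor $b_0(\partial N)$ tracks the number of boundary tori and enters through the torus orientation signs via Lemma~\ref{lem:canor}. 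I expect that the bulk of the proof, after setting up the exact sequence, is the bookkeeping needed to verify that all the other terms in the combined sign exponent cancel in pairs or vanish because $\chi(N)=\chi(N,\partial N)=0$ and $\chi(T_i)=0$, leaving exactly $d(b_1(N)+b_0(\partial N)) \bmod 2$. This is conceptually routine but computationally the crux, and it mirrors closely the analogous sign analyses carried out by Turaev in \cite{Tu86,Tu90,Tu02}.
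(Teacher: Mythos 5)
Your overall architecture coincides with the paper's: triangulate, apply the multiplicativity of torsion (Lemma~\ref{lem:ses}) to the short exact sequence of the pair $(N,\partial N)$, use the canonical Euler structures and the normalization (\ref{equ:taue1}) so that the boundary tori contribute trivially, and observe that $K$ is precisely the device that grafts the canonical boundary lifts onto a lift of $e$. The twisted half of the argument (the paper's Lemma~\ref{lem:tau1}) is correctly handled in your outline: all twisted complexes are acyclic, so $\mu=0$, the long exact homology sequence contributes nothing, and only the exponent $\nu^\varphi=d\nu$ survives.

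The genuine gap is in the untwisted half. The sign-refined torsion is by definition the twisted torsion multiplied by $\sign\big(\check{\tau}(C_*(X,Y;\R),c_*,h_*)\big)^d$, so Lemma~\ref{lem:ses} must be applied a second time, to the real chain complexes, where the homology does \emph{not} vanish: the long exact homology sequence $\HH_*$ of $(N,\partial N)$, based by $h_*$, the dual basis $h_*^\d$ and the canonical torus orientations, is a nontrivial acyclic complex whose torsion has to be computed, and in addition the exponent $\mu$ is now nonzero (it comes out to $1$ mod $2$). You describe this as bookkeeping that should "cancel in pairs," but the identity $\sign(\tau(\HH_*,\BB_*))=(-1)^{b_0(\partial N)+b_1(N)+1}$ (the paper's Lemma~\ref{lem:torh}) does not fall out of cancellations; it requires a structural idea you have not named: split $\HH_*$ at the half-dimensional self-annihilating subspace $L=\ker(H_1(\partial N;\R)\to H_1(N;\R))$ into two shorter exact sequences, show via the intersection pairings (and Lemma~\ref{lem:canor} on each torus, which is where $b_0(\partial N)$ enters) that the second is isomorphic to the dual of the first, and then invoke the algebraic duality Lemma~\ref{lem:dual} to convert the product of their torsions into a sign; the $b_1(N)$ arises from comparing the dual based complex with the complex whose differentials carry the extra signs $(-1)^{m-i}$. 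Without this step the exponent $d(b_1(N)+b_0(\partial N))$ cannot be verified, so the proof is incomplete as written, although the strategy is the correct one.
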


\noindent The proof of Theorem~\ref{thm:torsionboundary} is postponed to the end of the subsection.

It follows from the long exact sequence of the pair $(N,\partial N)$, and our assumption that $H_*^\varphi(\partial N;\F^d)=0$,
that $H_*^\varphi(N;\F^d)=0$ if and only if $H_*^\varphi(N,\partial N;\F^d)=0$. In particular either both torsions are zero, or both are non-zero.
For the remainder of this proof we now assume that $H_*^\varphi(N;\F^d)=0$.

We  pick a triangulation $X$ for $N$. As usual we denote by $Y$ the subcomplex corresponding to $\partial N$.
Let $e\in \eul(N,\partial N)=\eul(X,Y)$. We pick an Euler lift $c_*$ which represents $e$.  We denote the components of $Y$ by $Y_1\cup\dots \cup Y_b$
and we pick Euler lifts $t^1_*,\dots,t^b_*$ which represent the canonical Euler structures. We denote by $\w_1,\dots,\w_b$ the canonical homology orientations of $Y_1,\dots,Y_b$.
Note that $\sign(\check{\tau}(C_*({Y}_i),t^i_*,\w_i)))=\pm 1$. After possibly swapping two cells we can assume that in fact
\be \label{equ:tau1} \sign(\check{\tau}(C_*({Y}_i),t^i_*,\w_i))=1.\ee
(Here and in the remainder of this section we denote by $c_*$ and $t^i_*$ also the projections of the cells to $X$ and $Y_i$.)

 We pick $\ti{t}^1_*,\dots, \ti{t}^b_*$ as in the previous section.
We write $\ti{t}_*=\ti{t}^1_*\cup \dots \cup \ti{t}^b_*$.
We denote by $\{\ti{t}_*\cup c_* \}$ the resulting Euler lift for $X$. Recall that this Euler lift represents $K(e)$.
We now have the following lemma:

\begin{lemma}\label{lem:tau1}
\[ \ctau\big(C_*^\varphi(X,Y;\F^d),\{c_* \otimes v_*\}\big)= (-1)^{\nu^\varphi} \ctau\big(C_*^\varphi(X;\F^d),\{( \ti{t}_*\cup c_*) \otimes v_*\}\big),\]
where
\[ \nu^\varphi= \sum_i \a_i^\varphi(X,Y)\a_{i-1}^\varphi(Y).\]
\end{lemma}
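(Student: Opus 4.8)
\textbf{Proof plan for Lemma \ref{lem:tau1}.}

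The plan is to view the chain complex $C_*^\varphi(X;\F^d)$ as fitting into a short exact sequence with $C_*^\varphi(X,Y;\F^d)$ and $C_*^\varphi(Y;\F^d)$, and then to apply the multiplicativity of $\check\tau$ from Lemma \ref{lem:ses}. Concretely, the inclusion $Y\hookrightarrow X$ induces the short exact sequence of twisted chain complexes
\[ 0\to C_*^\varphi(Y;\F^d)\xrightarrow{\i} C_*^\varphi(X;\F^d)\to C_*^\varphi(X,Y;\F^d)\to 0. \]
First I would set up the bases carefully so that they are compatible in the sense of Section~\ref{sec:twisted torsion of 3-manifolds}: the Euler lift $\{\ti t_*\cup c_*\}$ for $X$ has the property that $\ti t_*$ lies over cells of $Y$ and $c_*$ lies over cells of $X\sm Y$, so the basis $\{(\ti t_*\cup c_*)\otimes v_*\}$ for $C_*^\varphi(X;\F^d)$ is of the form $\{\i(\ti t_*\otimes v_*), c_*\otimes v_*\}$, where the second block projects exactly to the chosen basis of the quotient. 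Here one uses that $\ti t^i_*$ is built by lifting $t^i_*$ (a lift of the canonical Euler structure of $Y_i$, pushed through the intermediate cover $\wti{T_i}'$) so that $\{\ti t^i_*\otimes v_*\}$ and $\{t^i_*\otimes v_*\}$ do correspond under $\i$ — this is precisely the compatibility that makes Lemma \ref{lem:ses} applicable.

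Next I would plug into Lemma \ref{lem:ses}. By hypothesis $H_*^\varphi(Y;\F^d)=\bigoplus_i H_*^\varphi(Y_i;\F^d)=0$ and $H_*^\varphi(X;\F^d)=0$; combining with the long exact sequence this also gives $H_*^\varphi(X,Y;\F^d)=0$. So all three complexes are acyclic, the homology-bases $h'_*,h_*,h''_*$ are all empty, and the long exact homology sequence $\mathcal H$ is the zero complex with $\tau(\mathcal H,\mathcal B_*)=1$. Since the $\b^\varphi_i$ of all three complexes vanish, the correction exponent $\mu$ in Lemma \ref{lem:ses} is $\sum_i(0+1)(0+0)+0=0$, and the only surviving term is $\nu=\sum_i \a_i(C'')\a_{i-1}(C')=\sum_i \a_i^\varphi(X,Y)\a_{i-1}^\varphi(Y)=\nu^\varphi$. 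Therefore
\[ \ctau\big(C_*^\varphi(X;\F^d),\{(\ti t_*\cup c_*)\otimes v_*\}\big)=(-1)^{\nu^\varphi}\,\ctau\big(C_*^\varphi(Y;\F^d),\{\ti t_*\otimes v_*\}\big)\cdot \ctau\big(C_*^\varphi(X,Y;\F^d),\{c_*\otimes v_*\}\big). \]
Rearranging (and using $(-1)^{\nu^\varphi}=(-1)^{-\nu^\varphi}$) gives the claimed identity provided the factor $\ctau(C_*^\varphi(Y;\F^d),\{\ti t_*\otimes v_*\})$ equals $1$.

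The remaining point — and the main obstacle — is exactly this last equality: one must show $\ctau(C_*^\varphi(Y;\F^d),\{\ti t_*\otimes v_*\})=1$. Here the key inputs are the normalization (\ref{equ:taue1}), which says $\tau(T,\varphi,e,\w)=1$ for the canonical Euler structure $e$ and canonical homology orientation $\w$ on a torus whenever $H_*^\varphi(T;\F^d)=0$, together with the normalization (\ref{equ:tau1}) of the untwisted sign $\sign(\ctau(C_*(Y_i),t^i_*,\w_i))=1$. One has $\ctau(C_*^\varphi(Y;\F^d),\{\ti t_*\otimes v_*\})=\prod_i\ctau(C_*^\varphi(Y_i;\F^d),\{\ti t^i_*\otimes v_*\})$, and I would identify each factor with the twisted torsion computed from $t^i_*$ directly: the cells $\ti t^i_*$ in $p^{-1}(T_i)\subset \wti X$ are chosen one-over-one above the cells $t^i_*$ pushed to $\wti{T_i}'$, so the corresponding based twisted complexes are isomorphic — this is where one must be careful that $\varphi$ restricted to $\pi_1(T_i)$ factors (up to the intermediate cover) compatibly with the choice of $\ti t^i_*$, so that the tensor products over $\Z[\pi_1(N)]$ and over $\Z[\pi_1(T_i)]$ agree after this identification. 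Granting that, each factor is $\ctau(C_*^\varphi(T_i;\varphi,e_i,\w_i))$ with $e_i$ the canonical Euler structure and $\w_i$ the canonical homology orientation, and by (\ref{equ:taue1}) together with (\ref{equ:tau1}) (which ensures the untwisted sign contributes $+1$) this equals $\tau(T_i,\varphi,e_i,\w_i)=1$. Multiplying over $i$ finishes the proof.
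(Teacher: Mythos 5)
Your proposal is correct and follows essentially the same route as the paper: the same short exact sequence $0\to \bigoplus_i C_*^\varphi(Y_i;\F^d)\to C_*^\varphi(X;\F^d)\to C_*^\varphi(X,Y;\F^d)\to 0$ with compatible bases, the same application of Lemma \ref{lem:ses} with $\mu=0$ and $\tau(\mathcal H)=1$ by acyclicity, and the same use of (\ref{equ:taue1}) and (\ref{equ:tau1}) to see that each torus factor contributes $1$. You are in fact somewhat more explicit than the paper about why the basis $\{\ti t^i_*\otimes v_*\}$ built through the intermediate cover $\wti{T_i}'$ identifies with the based complex computing $\tau(T_i,\varphi,e_i,\w_i)$, a point the paper passes over with the remark that the bases are compatible.
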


\begin{proof}
We consider the following  short exact sequence of chain complexes
\[ 0\to \bigoplus_{i=1}^b C_*^\varphi(Y_i;\F^d)\to C_*^\varphi(X;\F^d)\to C_*^\varphi(X,Y;\F^d)\to 0,\]
with the ordered bases
\[ \{t^{i}_* \otimes v_*\}_{i=1,\dots,b},\, \{( \ti{t}_*\cup c_*) \otimes v_*\} \mbox{ and } \{c_* \otimes v_*\}.\]
Note that these bases are in fact compatible.
Also note that by (\ref{equ:taue1}) and (\ref{equ:tau1}) we have
\[ \ctau(C_*^\varphi(Y_i;\F^d),\{t_*^i\otimes v_*\})=1.\]
It now follows from Lemma \ref{lem:ses} that
\[ \ctau\big(C_*^\varphi(X,Y;\F^d),\{c_* \otimes v_*\}\big)= (-1)^{\nu^\varphi} \ctau\big(C_*^\varphi(X;\F^d),\{(c_*\cup \ti{t}_*) \otimes v_*\}\big).\]
(Here we used that the complexes are acyclic, in particular the  $\mu$-term is zero and $\tau(\HH)=1$.)
\end{proof}

We now pick a basis $h_*$ for $H_*(X,Y;\R)$ which represents $\w$. We denote by $h_*^\d$ the dual basis of $H_*(X;\R)$,
and for $i=1,\dots,b$ we denote by $h_*^i$ a basis representing the canonical homology orientation of $Y_i$.
We denote by $\mathcal{H}$ the long exact sequence in homology of the pair $(X,Y)$. Note that the above bases in homology give rise to a basis $\BB_*$ for this complex.
We now have the following lemma:

\begin{lemma}\label{lem:tau2}
The following equality holds:
\[ \sign\big(\ctau(C_*(X,Y;\R),c_*,h_*)\big)= (-1)^{\nu+1}\cdot \sign\big(\ctau(C_*(X;\R), \ti{t}_* \cup c_*,h_*^\d \})\big)\cdot \sign(\tau(\HH_*,\BB_*)),\]
where
\[\ba{rcl} \nu&=& \sum_i \a_i(X,Y)\a_{i-1}(Y).\ea \]
\end{lemma}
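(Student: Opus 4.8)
The plan is to run the same bookkeeping used in the proof of Lemma~\ref{lem:tau1}, but now in the \emph{non-acyclic} real chain complex setting where the long exact homology sequence $\HH_*$ genuinely contributes. Concretely, I would apply Lemma~\ref{lem:ses} to the short exact sequence of based $\R$-chain complexes
\[ 0\to \bigoplus_{i=1}^b C_*(Y_i;\R)\to C_*(X;\R)\to C_*(X,Y;\R)\to 0 \]
with the compatible bases $\{t^i_*\}$, $\widetilde{t}_*\cup c_*$, and $c_*$, and the homology bases $h^i_*$ (canonical), $h^\d_*$ (dual to $h_*$), and $h_*$. Lemma~\ref{lem:ses} then gives
\[ \ctau(C_*(X;\R),\widetilde{t}_*\cup c_*,h^\d_*)=(-1)^{\nu+\mu}\Big(\prod_{i=1}^b\ctau(C_*(Y_i;\R),t^i_*,h^i_*)\Big)\cdot\ctau(C_*(X,Y;\R),c_*,h_*)\cdot\tau(\HH_*,\BB_*), \]
where $\nu=\sum_i\a_i(X,Y)\a_{i-1}(Y)=\sum_i\a_i(C'')\a_{i-1}(C')$ exactly as in Lemma~\ref{lem:ses}, and $\mu$ is the other term from that lemma. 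Taking signs of both sides and solving for $\sign(\ctau(C_*(X,Y;\R),c_*,h_*))$ yields the claimed formula, provided I can show two things: first, that each boundary factor $\ctau(C_*(Y_i;\R),t^i_*,h^i_*)$ has sign $+1$; second, that the correction exponent collapses to exactly $\nu+1$, i.e. that $(-1)^\mu$ contributes the single extra sign.

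For the boundary factor, I would invoke Lemma~\ref{lem:canor}: the canonical homology orientation $\w_i$ of a torus equals $-\w'_i$ where $\w'_i$ is the geometric ordered basis $\{P,x,y,A\}$, and the proof of that lemma shows $\ctau=-1$ for the geometric basis, hence $\ctau(C_*(Y_i;\R),t^i_*,h^i_*)=+1$ once we use the canonical orientation $h^i_*$ and the Euler lift $t^i_*$ normalized by \eqref{equ:tau1}. (I should double-check that the $\ctau$ versus $\tau$ renormalization sign $(-1)^{\eta}$ does not intervene here; since $\chi(T)=0$ and $b_0=b_2=1$, $b_1=2$, the combinatorics of $\eta(T)$ and $\b_i(T)$ are explicit and work out, and the sign of $\sign(\ctau)$ is all we need anyway.) With all $b$ boundary factors equal to $+1$, the product drops out of the sign identity.

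The main obstacle — and the only genuinely delicate point — is pinning down $(-1)^\mu$. By definition $\mu=\sum_i\big((\b_i(C)+1)(\b_i(C')+\b_i(C''))+\b_{i-1}(C')\b_i(C'')\big)$ where $C=C_*(X;\R)$, $C'=\bigoplus C_*(Y_i;\R)$, $C''=C_*(X,Y;\R)$, with $\b$ the renormalized reduced Betti-type sums $\b_i(\cdot)=\sum_{j\le i}(-1)^{j-i}\dim H_j$. I would compute this modulo $2$ using the long exact sequence relation among the $\b_i$'s of $C,C',C''$ and the fact that $N$ is a $3$-manifold with $b$ toroidal boundary components (so $H_*(\partial N;\R)$ has known dimensions $1,2b,b$ in degrees $0,1,2$, and $H_0(N;\R)=\R$, $H_0(N,\partial N;\R)=0$, together with Poincaré–Lefschetz duality $\b_i(C'')\leftrightarrow\b_{3-i}(C)$). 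Carrying out this $\modd 2$ arithmetic should reduce $\mu$ to $1\ \modd 2$, giving the stated $(-1)^{\nu+1}$. I expect this to be a short but fiddly parity computation — the kind of thing best done by writing out the five or six nonzero $\b_i$'s explicitly and adding. Finally, I would note that $\tau(\HH_*,\BB_*)$ appears untouched on the right because it is exactly the torsion of the homology long exact sequence with the chosen bases, which is precisely the quantity that will be matched against the analogous twisted term in the next step of the argument; here we only extract its sign.
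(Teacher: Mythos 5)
Your proposal is correct and follows essentially the same route as the paper: the paper likewise applies Lemma~\ref{lem:ses} to the short exact sequence $0\to \bigoplus_i C_*(Y_i;\R)\to C_*(X;\R)\to C_*(X,Y;\R)\to 0$ with exactly these compatible bases and homology bases, kills the boundary factors using the normalizations (\ref{equ:taue1}) and (\ref{equ:tau1}), and then verifies $\mu\equiv 1 \bmod 2$ by the explicit parity computation of the $\b_i$'s that you outline (using $\chi=0$, $H_3(X;\R)=0$, and the Betti numbers of the boundary tori).
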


\begin{proof}
We write
\[ C'_*=\bigoplus_{i=1}^b C_*(Y_i;\R), \,\, C_*=C_*(X;\R), \mbox{ and } C_*''=C_*(X,Y;\R).\]
We then consider the  short exact sequence of $0\to C'_*\to C_*\to C_*''\to 0$
with the bases
\[ \{t^{i}_* \}_{i=1,\dots,b},\, \{ \ti{t}_*\cup c_* \} \mbox{ and } \{c_* \}\]
and the homology bases
\[ \{h^i_*\}_{i=1,\dots,b},\,\, h_*^\d \mbox{ and } h_*.\]
Note that the bases of the  chain complexes are compatible.
Also note that by (\ref{equ:taue1}) and (\ref{equ:tau1}) we have
\[ \sign(\ctau(C_*(Y_i),\{t_*^i\},h_*^i))=1.\]
It now follows from Lemma \ref{lem:ses} that
\[ \ctau\big(C_*(X,Y;\R),\{c_* \}\big)= (-1)^{\nu+\mu}\cdot  \ctau\big(C_*(X;\R),\{(c_*\cup \ti{t}_*) \}\big)\cdot \tau(\HH_*,\BB_*),\]
where
\[ \mu=\sum_i \big((\b_i(C)+1)(\b_i(C')+\b_i(C''))+\b_{i-1}(C')\b_i(C'')\big). \]
Note that we only have to determine $\mu$ modulo 2.
Recall that $\chi(X,Y)=\chi(X)=\chi(Y)=0$ and note that $H_3(X;\R)=0$ since we assumed that $Y$ is non-empty.
One can now easily show that the following equalities hold modulo 2:
\[ \ba{rclrclrcl}
\b_0(C')&\equiv&b, & \b_0(C)&\equiv&1, &\b_0(C'')&\equiv&0, \\
\b_1(C')&\equiv&b, & \b_1(C)&\equiv&1+b_1(N), &\b_1(C'')&\equiv&b_1(N)-1, \\
\b_2(C')&\equiv&0, & \b_2(C)&\equiv&0, &\b_2(C'')&\equiv&1, \\
\b_3(C')&\equiv&0, & \b_3(C)&\equiv&0, &\b_3(C'')&\equiv&0.\ea \]
An elementary calculation now shows that $\mu\equiv 1$.
\end{proof}

Finally we also have the following lemma:

\begin{lemma} \label{lem:torh}
\[ \sign(\tau(\HH_*,\BB_*))=(-1)^{b_0(\partial N)+b_1(N)+1}.\]
\end{lemma}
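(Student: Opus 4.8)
The plan is to compute the sign of the torsion of the long exact homology sequence $\HH_*$ of the pair $(X,Y)$ directly from the chosen homology bases, using the canonical homology orientations on the boundary tori and the assumption that the relevant twisted homology vanishes only plays a role upstream; here we work with $\R$-coefficients throughout. First I would recall the shape of $\HH_*$: it is the complex
\[ \cdots\to H_i(Y;\R)\to H_i(X;\R)\to H_i(X,Y;\R)\to H_{i-1}(Y;\R)\to\cdots, \]
which, since $N$ is a $3$-manifold with non-empty toroidal boundary, has $H_3(X;\R)=0$ and terminates appropriately; its based structure $\BB_*$ comes from the bases $h_*$ on $H_*(X,Y;\R)$, $h_*^\d$ on $H_*(X;\R)$, and the bases $h_*^i$ representing the canonical homology orientations on the $Y_i$. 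The key point is that $h_*$ and $h_*^\d$ are \emph{dual} with respect to the intersection pairing, so the positions where they contribute to $\tau(\HH_*,\BB_*)$ will match up, and the sign of $\tau(\HH_*,\BB_*)$ will reduce to contributions that can be read off from the Betti numbers of $N$ and $\partial N$ together with the sign appearing in Lemma \ref{lem:canor}.

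The key steps, in order, are: (1) write out $\HH_*$ with the precise dimensions of each term, recording $b_0(\partial N)=b$, $b_1(\partial N)=2b$, and expressing $b_i(X)$ and $b_i(X,Y)$ in terms of $b_0(N)=1$, $b_1(N)$, and $b$ via Poincar\'e--Lefschetz duality and the half-lives-half-dies principle for $H_1(\partial N;\R)\to H_1(N;\R)$; (2) observe that $\BB_*$ together with the duality between $h_*$ and $h_*^\d$ lets us split $\tau(\HH_*,\BB_*)$ into a product over the three types of groups, where the $H_*(X;\R)$-versus-$H_*(X,Y;\R)$ contributions cancel in sign up to a bookkeeping term; (3) use Lemma \ref{lem:canor} to pin down the sign contributed by each boundary torus, each of which contributes a factor of $-1$ relative to the naively oriented $\{P,x,y,A\}$ basis, giving a total $(-1)^b=(-1)^{b_0(\partial N)}$; (4) collect the remaining parity from the ranks — the odd-dimensional mismatch governed by $b_1(N)$ — into the exponent, arriving at $(-1)^{b_0(\partial N)+b_1(N)+1}$. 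Throughout I would use the elementary properties of $\tau$ under swapping and rescaling basis vectors recorded in Section \ref{section:torsion}, and the renormalization factor $(-1)^{\eta}$ can be checked to contribute trivially here since the relevant $\eta$-terms for $\HH_*$ are even (or can be absorbed).

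I expect the main obstacle to be step (2): carefully tracking how the based long exact sequence decomposes and verifying that the cross-terms between the $H_*(X)$ and $H_*(X,Y)$ bases really do cancel up to an explicit sign, rather than leaving an uncontrolled contribution. This requires being honest about the connecting homomorphisms $H_i(X,Y;\R)\to H_{i-1}(Y;\R)$ and the images of $H_i(Y;\R)\to H_i(X;\R)$, and choosing the auxiliary bases $b_*,b_*'$ in the torsion computation of $\HH_*$ compatibly with the intersection-duality pairing. A clean way to organize this is to note that $\HH_*$ itself is, up to sign, self-dual under the pairing induced by Poincar\'e--Lefschetz duality (this is essentially Milnor's duality for the homology sequence of a manifold pair), so $\tau(\HH_*,\BB_*)$ must be $\pm 1$ and its sign is forced by the self-duality sign, which is exactly the Betti-number expression claimed; the boundary-torus factor then enters through the normalization $h_*^i$ via Lemma \ref{lem:canor}. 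Once the decomposition in step (2) is set up correctly, steps (3) and (4) are routine sign bookkeeping, and the remaining calculation parallels the computation of $\mu$ in Lemma \ref{lem:tau2}.
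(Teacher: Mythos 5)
Your plan is correct and follows essentially the same route as the paper: split the long exact sequence at the half-lives-half-dies Lagrangian in $H_1(\partial N;\R)$, identify the second half with the Poincar\'e--Lefschetz dual of the first (the Milnor-style self-duality you invoke), extract the factor $(-1)^{b_0(\partial N)}$ from Lemma \ref{lem:canor} via the base change on $H_*(\partial N;\R)$, and absorb the remaining parity from the ranks into $(-1)^{b_1(N)+1}$. The obstacle you flag in step (2) is exactly where the paper expends its effort (the commuting/anticommuting duality diagrams and the comparison of the dual complex with Turaev's sign convention), and your proposed resolution is the one the paper carries out.
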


\begin{proof}
We pick an orientation for $N$, i.e. we pick an orientation class $[N]\in H_3(N,\partial N;\Z)$  and we equip $\partial N$ with the orientation class
given by the image $[\partial N]$ of $[N]$ under the map $H_3(N,\partial N;\Z)\to H_2(\partial N;\Z)$.
 We will consider the intersection forms on $N$ and $\partial N$ with respect to these orientations.
We write
\[ L:=\im\{H_2(N,\partial N;\R)\to H_1(\partial N;\R)\}=\ker\{H_1(\partial N;\R)\to H_1(N;\R)\}.\]
It is well-known that $L$ is a half-dimensional subspace of $H_1(\partial N;\R)$ which is self-annihilating with respect to the intersection form on $\partial N$.
Put differently, we have  $L=L^\perp$.
We now pick a basis $v=\{v_1,\dots,v_b\}$ for $L$. We pick a subspace $L^*$ of $H_1(\partial N;\R)$ such that $H_1(\partial N;\R)=L\oplus L^*$.
Note that the intersection form on $H_1(\partial N;\R)$ gives rise to a non-singular pairing $L\times L^*\to \R$.
We now endow $L^*$ with the basis $v^*=\{v_1^*,\dots,v_b^*\}$ which is dual to $v$, i.e. which satisfies $v_i\cdot v_j^*=\delta_{ij}$.

We now consider the following two exact sequences (with $\R$ coefficients understood):
\[ \ba{cccccccccccccccccc}
\HH'_* \colon \hspace{-0.1cm}& 0\hspace{-0.1cm}&\to\hspace{-0.1cm}& L^*\hspace{-0.1cm}&\to \hspace{-0.1cm}&H_1(N)\hspace{-0.1cm}&\to\hspace{-0.1cm}&H_1(N,\partial N)\hspace{-0.1cm}&\to \hspace{-0.1cm}&H_0(\partial N)\hspace{-0.1cm}&\to \hspace{-0.1cm}&H_0(N)\hspace{-0.1cm}&\to \hspace{-0.1cm}&0, \\
\HH''_* \colon \hspace{-0.1cm}&0\hspace{-0.1cm}&\leftarrow \hspace{-0.1cm}&L\hspace{-0.1cm}&\leftarrow \hspace{-0.1cm}&H_2(N,\partial N)\hspace{-0.1cm}&\leftarrow \hspace{-0.1cm}&H_2(N)\hspace{-0.1cm}&\leftarrow \hspace{-0.1cm}&H_2(\partial N)\hspace{-0.1cm}&\leftarrow \hspace{-0.1cm}&H_3(N,\partial N)\hspace{-0.1cm}&\leftarrow \hspace{-0.1cm}&0.\ea \]
As before we equip the vector spaces $H_*(N;\R)$ and $H_*(N,\partial N;\R)$ with the bases $h_*^\d$ and $h_*$. We now  equip $H_0(\partial N;\R)$ with the basis consisting of a point in each
component, we equip $L$ and $L^*$ with the bases $v$ and $v^*$ and we equip $H_2(\partial N;\R)$ with the basis given by the orientation of $\partial N$.
We denote the resulting basis for $\HH'_*$ by $\BB'_*$ and the resulting basis for $\HH_*''$ by $\BB_*''$.

\begin{claim}
\[ \sign(\tau(\HH_*,\BB_*))=(-1)^b\cdot \sign(\tau(\HH'_*,\BB'_*))\cdot \sign(\tau(\HH_*'',\BB_*'')).\]
\end{claim}

Note that the obvious concatenation of $(\HH'_*,\BB'_*)$ and $(\HH_*'',\BB_*'')$ is exactly the same as $(\HH_*,\BB_*)$,
 except that the basings for the groups $H_0(\partial N;\R)$, $H_1(\partial N;\R)=L\oplus L^*$ and $H_2(\partial N;\R)$ differ.
 It thus suffices to show that $(-1)^b$ is the sign of the base change matrix of $H_*(\partial N;\R)$.
We denote by $k_*$ the basis for $H_*(\partial N;\R)$ described just before the claim.
For each $i=1,\dots,b$ we also denote by $(h_*^i)'$ a basis of $H_*(T_i;\R)$ which represents the homology orientation $\w'$  as defined in Lemma \ref{lem:canor}.
It follows easily from the definitions that
\[ \sign\big( [k_*/((h_*^1)'\cup \dots \cup (h_*^b)')]\big)=1.\]
On the other hand it follows from
Lemma \ref{lem:canor} that
\[ [h_*^i/(h_*^i)']=-1\]
for any $i$. The claim now follows  immediately from the definitions.

We denote by $(D_*,d_*)$ the based complex $\HH_*'$, in particular we have $D_0=H_0(N;\R)$ and $D_4=L^*$ and $\partial_i$ denotes the map $D_{i+1}\to D_i$.
We denote by $(D^*,d^*)$ the based complex given by $D^i:=D_{4-i}^*:={\hom_\R(D_{4-i},\R)}$, where the boundary map $\partial_i:D_{i+1}^*\to D_i^*$
is given by $\partial_{3-i}^*$, and where $d^*$ denotes the basis  dual to $d_*$.

\begin{claim}
The based chain complexes $(D^*,d^*)$ and $(\HH_*'',\BB_*'')$ are isomorphic, in particular
\[ \tau(D^*,d^*)=\tau(\HH_*'',\BB_*'').\]
\end{claim}

The proof of this claim follows from a careful consideration of Poincar\'e duality. Note that for any pair of spaces $(X,Y)$
we follow the convention of \cite[p.~332]{Br93} for the coboundary map $C^i(X,Y;\R)\to C^{i+1}(X,Y;\R)$  which says that
\be \label{conv:br} \delta f =(-1)^{i+1} f\circ \partial.\ee
In the diagram below, $D$ denotes the isomorphism given by Poincar\'e duality. Note that $D(a)\cap [N]=a$ for any $a\in H_i(N,\partial N)$ or
$a\in H_i(N)$.
We now consider the following diagram:
\[ \ba{cccccccccccccccccc}
& \hspace{-0.2cm}&H_1(\partial N)\hspace{-0.2cm}&\leftarrow \hspace{-0.2cm}&H_2(N,\partial N)\hspace{-0.2cm}&\leftarrow \hspace{-0.2cm}&H_2(N)\hspace{-0.2cm}&\leftarrow \hspace{-0.2cm}&H_2(\partial N)\hspace{-0.2cm}&\leftarrow \hspace{-0.2cm}&H_3(N,\partial N)\hspace{-0.2cm}&\leftarrow \hspace{-0.2cm}&0\\[1mm]
&&\downarrow D  &-&\downarrow D &&\downarrow D &-&\downarrow D&&\downarrow D\\[1mm]
 &\hspace{-0.2cm}& H^1(\partial N)\hspace{-0.2cm}&\leftarrow \hspace{-0.2cm}&H^1(N)\hspace{-0.2cm}&\leftarrow\hspace{-0.2cm}&H^1(N,\partial N)\hspace{-0.2cm}&\leftarrow \hspace{-0.2cm}&H^0(\partial N)\hspace{-0.2cm}&\leftarrow \hspace{-0.2cm}&H^0(N)\hspace{-0.2cm}&\leftarrow \hspace{-0.2cm}&0\\[1mm]
 &&\downarrow   &&\downarrow  &&\downarrow &-&\downarrow &&\downarrow \\[1mm]
  &\hspace{-0.2cm}& H_1(\partial N)^*\hspace{-0.2cm}&\leftarrow \hspace{-0.2cm}&H_1(N)^*\hspace{-0.2cm}&\leftarrow\hspace{-0.2cm}&H_1(N,\partial N)^*\hspace{-0.2cm}&\leftarrow \hspace{-0.2cm}&H_0(\partial N)^*\hspace{-0.2cm}&\leftarrow \hspace{-0.2cm}&H_0(N)^*\hspace{-0.2cm}&\leftarrow \hspace{-0.2cm}&0.\ea \]
Here the top vertical maps are given by Poincar\'e duality and the bottom vertical maps are given by evaluation.
A minus sign in a square indicates that the corresponding square anti-commutes, and all other squares commute.
That the above mentioned (anti-) commutativity holds, follows from  \cite[Theorem~VI.9.2]{Br93} for the upper sequence of squares,
and it follows from the definitions and from convention (\ref{conv:br}) for the lower sequence of squares.

Note that the second and fifth vertical maps define the following isomorphism:
\[ \ba{rcl} H_i(N,\partial N;\R) &\to & H_{3-i}(N;\R)^* \\
a&\mapsto & (b\mapsto D(a)\cap b).\ea \]
By \cite[Theorem~VI.5.2]{Br93} we have
\[ \ba{rcl} D(a)\cap b&=&D(a)\cap (D(b)\cap [N])=(D(a)\cup D(b))\cap [N]\\
&=&(D(a)\cup D(b))\cap [N]=(D(b)\cup D(a))\cap [N]\\
&=&b\cdot a, \ea \]
where $b\cdot a$ indicates the intersection pairing as defined in Section \ref{section:dualhomol}. The same arguments apply for all other vertical maps.
Note for the four intersection pairings defined on the right we have  $b\cdot a=(-1)^{|a|\cdot|b|}a\cdot b=a\cdot b$ since one of the dimensions is always even.
On the other hand the intersection pairing $H_1(\partial N;\R)\times H_1(\partial N;\R)\to \R$ defined by the vertical maps on the left is anticommutative.
The above discussion shows that the following diagram, where the  vertical maps are all given by sending $a$ to the map $b\mapsto (a\cdot b)$, commutes:
\[ \ba{cccccccccccccccccc}
&H_1(\partial N)\hspace{-0.2cm}&\leftarrow \hspace{-0.2cm}&H_2(N,\partial N)\hspace{-0.2cm}&\leftarrow \hspace{-0.2cm}&H_2(N)\hspace{-0.2cm}&\leftarrow \hspace{-0.2cm}&H_2(\partial N)\hspace{-0.2cm}&\leftarrow \hspace{-0.2cm}&H_3(N,\partial N)\hspace{-0.2cm}&\leftarrow \hspace{-0.2cm}&0\\[1mm]
&\downarrow   &&\downarrow  &&\downarrow  &&\downarrow &&\downarrow \\[1mm]
 & H_1(\partial N)^*\hspace{-0.2cm}&\leftarrow \hspace{-0.2cm}&H_1(N)^*\hspace{-0.2cm}&\leftarrow\hspace{-0.2cm}&H_1(N,\partial N)^*\hspace{-0.2cm}&\leftarrow \hspace{-0.2cm}&H_0(\partial N)^*\hspace{-0.2cm}&\leftarrow \hspace{-0.2cm}&H_0(N)^*\hspace{-0.2cm}&\leftarrow \hspace{-0.2cm}&0.\ea \]
This diagram now gives rise to the following commutative diagram, where again the  vertical maps are all given by sending $x$ to the map $y\mapsto (x\cdot y)$:
\[ \ba{cccccccccccccccccc}
\HH_*'':&0\hspace{-0.2cm}&\leftarrow \hspace{-0.2cm}&L\hspace{-0.2cm}&\leftarrow \hspace{-0.2cm}&H_2(N,\partial N)\hspace{-0.2cm}&\leftarrow \hspace{-0.2cm}&H_2(N)\hspace{-0.2cm}&\leftarrow \hspace{-0.2cm}&H_2(\partial N)\hspace{-0.2cm}&\leftarrow \hspace{-0.2cm}&H_3(N,\partial N)\hspace{-0.2cm}&\leftarrow \hspace{-0.2cm}&0\\[1mm]
&&&\downarrow   &&\downarrow  &&\downarrow  &&\downarrow &&\downarrow \\[1mm]
(\HH_*')^*  &0\hspace{-0.2cm}&\leftarrow\hspace{-0.2cm}& (L^*)^*\hspace{-0.2cm}&\leftarrow \hspace{-0.2cm}&H_1(N)^*\hspace{-0.2cm}&\leftarrow\hspace{-0.2cm}&H_1(N,\partial N)^*\hspace{-0.2cm}&\leftarrow \hspace{-0.2cm}&H_0(\partial N)^*\hspace{-0.2cm}&\leftarrow \hspace{-0.2cm}&H_0(N)^*\hspace{-0.2cm}&\leftarrow \hspace{-0.2cm}&0.\ea \]
It is straightforward to verify, that the vertical maps define isomorphisms of based vector spaces. (For example, the left vertical map sends $v_i$ to $w\mapsto v_i\cdot w$, but by definition we have $v_i\cdot v_j^*=\delta_{ij}$.
For the vertical maps on the right note again that the intersection pairings are commutative.)
i.e. the maps define an isomorphism  $(D^*,d^*)$ and $(\HH_*'',\BB_*'')$ as claimed.
This concludes the proof of the claim.

Now also recall that previously we defined a based complex $(D^\d,d^\d)$. Note that for each $i$ we have the following commutative diagram
\[ \xymatrix{ D_i^* \ar[d]^= \ar[rr]^{\partial_{3-i}^*} && D_{i-1}\ar[d]^= \\
D_i^\d \ar[rr]^{(-1)^{4-i}\partial_{3-i}^*} && D_{i-1}^\d.}\]
It is now straightforward to see that
\[ \tau(D^*,d^*)=(-1)^{b_1(N)-b+1}\tau(D^\d,d^\d).\]

On the other hand it follows from Lemma \ref{lem:dual} that
\[ \tau(D^\d,d^\d)=(-1)^r\tau(D_*,d_*)^{-1}\]
where
\[
 r=\sum_{i=0}^4 \big(\a_i(D)\a_{i-1}(D)+\b_i(D)\b_{i-1}(D)\big)+\sum_{i=0}^{2}\big(\a_{2i}(D)+\b_{2i}(D)\big). \]
 Note that $D$ is acyclic, and therefore an elementary calculation now shows that
\[  r=\sum_{i=0}^4\a_i(D)\a_{i-1}(D)+\sum_{i=0}^{2}\a_{2i}(D)\equiv b\mbox{ mod }2.\]

Combining the above considerations we obtain that
\[ \ba{rcl}
\sign(\tau(\HH_*,\BB_*))&=&(-1)^b\cdot \sign(\tau(\HH'_*,\BB'_*))\cdot \sign(\tau(\HH_*'',\BB_*''))\\
&=&(-1)^b\cdot \tau(D_*,d_*)\cdot \tau(D^*,d^*)\\
&=&(-1)^{b}\cdot (-1)^{b_1(N)-b+1}\cdot \tau(D_*,d_*)\cdot \tau(D^\d,d^\d)\\
&=&(-1)^{b}\cdot (-1)^{b_1(N)-b+1}\cdot (-1)^{b}\cdot \tau(D_*,d_*)\cdot \tau(D_*,d_*)\\
&=&(-1)^{b+b_1(N)+1}.\ea \]
\end{proof}

\begin{proof}[Proof of Theorem \ref{thm:torsionboundary}]
Recall that
\[ \a_i^\varphi(Y)=d\cdot \a_i(Y) \mbox{ and }\a_i^\varphi(X,Y)=d\cdot \a_i(X,Y),\]
in particular $\nu^\varphi=d\cdot \nu$. It now follows from this observation, the  definitions, Lemmas \ref{lem:tau1}, \ref{lem:tau2} and \ref{lem:torh}
that
\[  \ba{cl}
&\tau(N,\partial N,\varphi,e,\w)\\
=&\ctau\big(C_*^\varphi(X,Y;\F^d),\{c_* \otimes v_*\}\big)\cdot \sign\big(\ctau(C_*(X,Y;\R),c_*,h_*)\big)^d\\
=& (-1)^{-d} \cdot \ctau\big(C_*^\varphi(X;\F^d),( \ti{t}_*\cup c_*) \otimes v_*\big)\cdot \sign\big(\ctau(C_*(X;\R), \ti{t}_* \cup c_*,h_*^\d )\big)^d\cdot \sign(\tau(\HH_*,\BB_*))^d\\
=&(-1)^{d(b_0(\partial N)+b_1(N))} \cdot \ctau\big(C_*^\varphi(X;\F^d),( \ti{t}_*\cup c_*) \otimes v_*\big)\cdot \sign\big(\ctau(C_*(X;\R), \ti{t}_* \cup c_*,h_*^\d )\big)^d\\
=& (-1)^{d(b_0(\partial N)+b_1(N))}\cdot  \tau(N,\varphi,K(e),\w^\d).\ea \]
\end{proof}

\section{Proof of the main duality theorem}
\label{sec:proof}

\subsection{Spin-c structures}

We first recall some facts about the set of $\spinc$-structures on 3-manifolds with empty or toroidal boundary.
We refer to \cite[Section~XI.1]{Tu02} for full details.
Let $N$ be a  3-manifold with empty or toroidal boundary.
We denote by $\spinc(N)$ the set of $\spinc$-structures on $N$.
The set $\spinc(N)$ admits a canonical, free and transitive action by $\HH_1(N)$.
Also note that given $\ss\in \spinc(N)$ we can consider the Chern  class $c_1(\ss)\in H^2(N,\partial N;\Z)=\HH_1(N)$. The Chern class has the property,  that for any $\ss\in \spinc(N)$ and $h\in \HH_1(N)$ we have
\be \label{equ:c1square} c_1(h\cdot \ss)=h^2c_1(\ss).\ee
Turaev showed that
there exists a canonical
$\HH_1(N)$-equivariant bijection
\[ \spinc(N)\to \vect(N)\]
which  preserves  Chern classes.
(An explicit description of the inverse map is provided in \cite{BP01}).
We will use this canonical bijection to identify $\vect(N)$ with $\spinc(N)$.
In particular, given a representation  $\varphi\colon \pi_1(N)\to \gl(d,\FF)$ over a field $\FF$, $\w\in \ort(N)$
and  $\ss \in \spinc(N)$ we now pick $e\in \vect(N)$ which corresponds to $\ss$ under the above canonical bijection and we define
\[ \tau(N,\varphi,\ss,\w):=\tau(N,\varphi,e,\w).\]

\subsection{Proof of Theorem \ref{mainthmstrongintro}}
In this subsection, we prove Theorem~\ref{mainthmstrongintro}.

\begin{proof}[Proof of Theorem~\ref{mainthmstrongintro}]
We pick $e\in \eul(N)=\vect(N)$ which corresponds to $\ss$.
If $N$ is a closed 3-manifold, then the conclusion follows immediately from  Theorem \ref{thm:dualitygeneralclosed} and the fact that $z(N)=0$.
We now consider the case that $N$ has non-empty toroidal boundary. Then it follows from  Theorem \ref{thm:dualitygeneral} that
\[  \ol{\tau(N,\varphi,e,\w)}=(-1)^{ds(N)}\cdot {\tau(N,\partial N,\varphi^\d,e^\d,\w^\d)}.\]
where
\[  s(N)=\sum_{i=0}^m \b_i(N)\b_{i-1}(N)+\sum_{i=0}^{[m/2]}\b_{2i}(N). \]
A straightforward calculation, using that $\chi(N)=0$ and that $b_3(N)=0$, shows that $s(N)=b_1(N)$.
But it follows from  Theorem \ref{thm:torsionboundary} that
\[ {\tau(N,\partial N,\varphi^\d,e^\d,\w^\d)}=(-1)^{d(b_1(N)+b_0(\partial N))} \cdot \tau(N,\varphi^\d,K(e^\d),\w).\]
Recall that $c_1(e)\in \HH_1(N)$ is defined to be the unique element such that $K(e^\d)=c_1(e)^{-1}\cdot e$.
Combining the above we now see that
\[  \ol{\tau(N,\varphi,e,\w)}=(-1)^{db_0(\partial N)} \cdot \tau(N,\varphi^\d,c_1(e)^{-1}\cdot e,\w).\]
The theorem now follows from Lemma \ref{lem:indet}.
\end{proof}

\section{Applications to twisted Alexander polynomials}\label{section:twialex}

\subsection{Definition of twisted Alexander polynomials}

Let $N$ be a 3-manifold with empty or toroidal boundary. We write $\pi=\pi_1(N)$. Let $\a\colon \pi\to \gl(d,\KK)$ be a representation over a field $\KK$. Let $\phi\colon \pi\to F$ be an epimorphism onto a free abelian group.
We denote the quotient field of $\KK[F]$ by $\KK(F)$.
Note that we have a tensor representation
\[ \ba{rcl} \a\otimes \phi\colon \pi&\to &\aut(\KK(F)^d)\\
g&\mapsto & (v\mapsto \phi(g)\cdot \a(g)(v))\ea \]
(here we view $\a(g)\in \gl(d,\KK)$ as an automorphism of $\KK(F)^d$ in the obvious way).
Note that for $g\in \pi$ we have
\[ \det((\a\otimes \phi)(g))=\phi(g)^d\det(\a(g)).\]

Given $(N,\a,\phi)$ as above we denote by
$\tau(N,\a\otimes \phi)\subset \KK(F)$ the set of values
\[ \{\tau(N,\a\otimes \phi,\ss,\w)\,|\,\w\in\ort(N) \mbox{ and }\ss\in \spinc(N)\}.\]
This set is called the \emph{twisted Alexander polynomial of $(N,\a,\phi)$}.
Each $\tau(N,\a\otimes \phi,\ss,\w)$ is called a \emph{representative} of $\tau(N,\a\otimes \phi)$.
Note that by Lemma \ref{lem:indet} any two representative differ by multiplication by an element in the set
\[ \ba{lll} \{ \eps\cdot f^d\cdot d\,|\, \eps=\pm 1, f\in F, d\in \det(\a(\pi))\},
&\mbox{ if $n$ is odd},\\
 \{  f^d\cdot d\,|\,f\in F, d\in \det(\a(\pi))\}&\mbox{ if $n$ is even}.\ea \]
In particular, if $\varphi\colon \pi\to \sl(2d,\KK)$ is a special linear, even dimensional representation,
then $\tau(N,\a\otimes \phi)\in \KK(F)$ is well-defined up to multiplication by an $2n$-th power of an element in $F$.

\subsection{Properties of Chern classes}

We now state a few properties of the  Chern class which we will need in the next section.

\begin{lemma}\label{lem:c1}
Let $N$ be a 3-manifold with empty or toroidal boundary and let $v\in \vect(N)$.
Then the following hold:
\bn
\item For any $h\in \HH_1(N)$ we have $c_1(hv)=h^2\cdot c_1(v)$,
\item if $N$ is closed, then there exists $w\in \vect(N)$ such that  $c_1(w)$ is trivial,
\item if $N$ is closed, then $c_1(v)=g^2$ for some $g\in \HH_1(N)$.
\en
\end{lemma}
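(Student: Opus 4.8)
The plan is to prove (1) directly from the defining relation for the Chern class, then to derive (3) from (1) together with a single classical fact, and finally to read off (2) formally.

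For (1), recall that $c_1(v)\in\HH_1(N)$ is characterized by $v=c_1(v)\cdot v^\d$ when $N$ is closed and by $v=c_1(v)\cdot K(v^\d)$ when $\partial N\ne\emptyset$, where $v^\d=J(v)$. The duality map satisfies $J(he)=h^{(-1)^m}J(e)$, which with $m=3$ reads $(hv)^\d=h^{-1}v^\d$, and $K$ is $\HH_1(N)$-equivariant, so in the bounded case also $K((hv)^\d)=h^{-1}K(v^\d)$. Substituting into the defining relation for $c_1(hv)$, using the defining relation for $c_1(v)$ to eliminate $v^\d$ (resp. $K(v^\d)$), and invoking freeness of the $\HH_1(N)$-action on $\eul(N)=\vect(N)$, one solves for $c_1(hv)$ and obtains $c_1(hv)=h^2c_1(v)$. (Equivalently, (1) is just $(\ref{equ:c1square})$ transported through Turaev's canonical, Chern-class-preserving bijection $\spinc(N)\to\vect(N)$.)

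Part (3) carries the only genuine input, and I expect it to be the crux. Under the canonical bijection $\vect(N)\to\spinc(N)$ the field $v$ corresponds to some $\ss\in\spinc(N)$ with $c_1(\ss)=c_1(v)\in H^2(N;\Z)=\HH_1(N)$, and $c_1(\ss)$ is characteristic, i.e. its mod $2$ reduction equals $w_2(TN)$. Since a closed orientable $3$-manifold is parallelizable, $w_2(TN)=0$; hence $c_1(v)$ lies in the kernel of the reduction map $H^2(N;\Z)\to H^2(N;\Z/2)$, which by the Bockstein sequence of $0\to\Z\xrightarrow{\times 2}\Z\to\Z/2\to 0$ is exactly $2H^2(N;\Z)$. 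Thus $c_1(v)=g^2$ for some $g\in\HH_1(N)$, which is (3); and (2) then follows at once by taking $w:=g^{-1}v$, since (1) gives $c_1(w)=g^{-2}c_1(v)=1$. The only point needing care is that the combinatorially defined Chern class of this paper agrees with the topological first Chern class of the associated $\spinc$-structure — but that is precisely the assertion, already invoked, that Turaev's bijection preserves Chern classes, so no further ingredient is required.
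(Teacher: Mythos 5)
Your proof is correct, but for parts (2) and (3) you run the argument in the opposite direction from the paper and with a different key input. The paper first proves (2) directly: since a closed orientable $3$-manifold is parallelizable, one can take $w$ to be (the class of) a nowhere-zero vector field that is homotopic to $-w$ (e.g.\ a constant field in a trivialization of $TN$); then $J(w)=w$ in $\vect(N)$, so $c_1(w)$ is trivial by the defining relation $w=c_1(w)\cdot w^\d$. Part (3) is then immediate from (1) and (2) by writing $v=gw$. You instead prove (3) first, via the classical fact that $c_1(\ss)$ of a $\spinc$-structure is characteristic ($c_1(\ss)\equiv w_2(TN)\bmod 2$), the vanishing of $w_2$ for closed orientable $3$-manifolds, and the Bockstein sequence identifying $\ker\bigl(H^2(N;\Z)\to H^2(N;\Z/2)\bigr)$ with $2H^2(N;\Z)$; then (2) follows by setting $w=g^{-1}v$. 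Both arguments are valid and both ultimately rest on parallelizability, but the paper's stays entirely inside the combinatorial vector-field framework, whereas yours imports the characteristic-class identity for $c_1$ of $\spinc$-structures, which the paper neither states nor proves (it is standard, but it is an extra external ingredient on top of the Chern-class-preserving property of Turaev's bijection that you correctly flag). Your treatment of (1) matches what the paper leaves to the reader.
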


\begin{proof}
The first statement follows easily from the definitions.
Now assume that $N$ is  a closed 3-manifold.
Recall  that any closed 3-manifold $N$ is parallelizable, in particular $N$ admits a vector field $w$ which is isotopic to $-w$. We denote the corresponding  $\spinc$-structure by $w$ as well. Note that $c_1(w)$ is trivial in $\HH_1(N)$ by definition.
We can now write $v=gw$ for some $g\in \HH_1(N)$ and it follows from (1) and (2) that $c_1(v)=c_1(g\cdot w)=g^2$.
\end{proof}

For links in homology spheres the following result of Turaev
(see \cite[Section~VI.2.2]{Tu02}) classifies which classes are realized by
Chern classes of elements in $\vect(N)$.

\begin{proposition}\label{prop:c1link}
Let $L$ be an oriented, ordered $m$-component link in a $\Z$-homology sphere $N$. We write $N_L=N\sm \nu L$.
We identify $\HH_1(N_L)$ with the free abelian multiplicative group generated by $t_1,\dots,t_m$.
Then for any $v\in \vect(N_L)$ we have that $c_1(v)$ is a charge, and conversely for any charge $c$ there exists a $v\in \vect(N_L)$ such that $c_1(v)=c$.
\end{proposition}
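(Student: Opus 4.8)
The plan is to reduce both assertions to a single coset comparison in $\HH_1(N_L)$. Recall that $\vect(N_L)$ is a torsor under the free and transitive $\HH_1(N_L)$-action, and write $\HH_1(N_L)^2:=\{h^2\mid h\in\HH_1(N_L)\}$ for the subgroup of squares, which under the identification $\HH_1(N_L)=\langle t_1,\dots,t_m\rangle$ is generated by $t_1^2,\dots,t_m^2$. By Lemma~\ref{lem:c1}(1) we have $c_1(hv)=h^2c_1(v)$, so as $v$ ranges over $\vect(N_L)$ the class $c_1(v)$ ranges over exactly one coset $c_1(v_0)\cdot\HH_1(N_L)^2$. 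On the other hand, if $t_1^{n_1}\cdots t_m^{n_m}$ and $t_1^{n_1'}\cdots t_m^{n_m'}$ are charges, then $n_i\equiv n_i'\bmod 2$ for each $i$, so their quotient lies in $\HH_1(N_L)^2$, while multiplying a charge by any element of $\HH_1(N_L)^2$ gives a charge; hence the set of charges is also a single coset of $\HH_1(N_L)^2$. Therefore both halves of the proposition --- ``$c_1(v)$ is always a charge'' and ``every charge is realized'' --- are equivalent to the equality of these two cosets, which in turn is equivalent to the existence of just one $v\in\vect(N_L)$ with $c_1(v)$ a charge, i.e. to checking, for a single $v$ and each $i$, that the $t_i$-exponent of $c_1(v)$ is $\equiv 1+\sum_{j\ne i}\lk(L_i,L_j)\bmod 2$.

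For this last point I would appeal to Turaev's computation in \cite[Section~VI.2.2]{Tu02}. The Chern class used here is defined exactly as in \cite[p.~11]{Tu02}: $c_1(e)$ is the unique $g\in\HH_1(N_L)$ with $e=g\cdot K(e^\d)$, where $e\mapsto e^\d=J(e)$ and $K$ are the maps built in Section~\ref{sec:twisted torsion of 3-manifolds} from the canonical Euler structures on the boundary tori; and the notion of charge is Turaev's \cite[p.~76]{Tu02}. Concretely, one picks a nowhere-zero vector field $v$ on $N_L$ pointing outward on $\partial N_L$ (which exists since $\chi(N_L)=0$), identifies $c_1(v)$ with the relative Euler class of the plane field $v^{\perp}$ taken with respect to the trivialization of $v^{\perp}$ over each boundary torus $T_j$ coming from the canonical vector field, and evaluates this class on a Seifert surface for $L_i$ pushed into $N_L$; the portion of the surface near $T_j$ contributes $\lk(L_i,L_j)$ when $j\ne i$, and the portion near $T_i$ contributes the extra $1$. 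Since $N$ is a $\Z$-homology sphere, the longitude $\lambda_j$ of $L_j$ represents $\prod_{k\ne j}t_k^{\lk(L_j,L_k)}$ in $\HH_1(N_L)$, which is what makes the linking numbers appear.

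The main obstacle is precisely this mod-$2$ bookkeeping at the boundary: one has to track carefully how the canonical structure on each $T_j$ enters the relative Euler class and confirm that the self-contribution of $T_i$ yields exactly the ``$+1$'' of the charge condition while the cross-contributions of the $T_j$ with $j\ne i$ yield the linking numbers. As this is exactly the content of \cite[Section~VI.2.2]{Tu02}, the cleanest presentation is to carry out the coset reduction above and then invoke that result, after noting that the identification of $\HH_1(N_L)$ with $\langle t_1,\dots,t_m\rangle$ via meridians and the definition of charge adopted here agree with Turaev's conventions.
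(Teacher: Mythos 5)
Your proposal is correct and ultimately rests on the same foundation as the paper, which offers no proof at all and simply cites Turaev \cite[Section~VI.2.2]{Tu02} for this proposition. Your coset reduction via Lemma~\ref{lem:c1}(1) (both the image of $c_1$ and the set of charges are single cosets of the squares in $\HH_1(N_L)$) is a clean and valid observation that isolates the one computation that must be imported from Turaev, but the essential content is still the cited result.
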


\subsection{Proofs of Theorems \ref{mainthm:twialexintro}, \ref{mainthm:twialexlinkintro} and \ref{thm:evendegreeintro}}

Recall that if $\KK$ is a field with involution, then we endow $\KK(F)$ with the involution induced by the involution on $\KK$ and the involution given by $\ol{f}=f^{-1}$ for $f\in F$.
The following is now an almost immediate consequence of
Theorem \ref{mainthmstrongintro}.

\begin{theorem}\label{mainthm:twialex}
Let $N$ be a  3-manifold with empty or toroidal boundary, let  $\a\colon \pi_1(N)\to \gl(d,\KK)$ be a representation
over a field $\KK$ with  involution, and let $\phi\colon H_1(N;\Z)\to F$
be an admissible epimorphism onto a free abelian group.
Suppose that $\a$ is conjugate to its dual.
Then for any $\ss\in \spinc(N)$ and any $\w\in \ort(N)$ we have
\[ \ol{\tau(N,\a\otimes \phi,\ss,\w)}=(-1)^{db_0(\partial N)}\cdot \det(\a(c_1(\ss)))\cdot \phi(c_1(\ss))^d\cdot \tau(N,\a\otimes \phi,\ss,\w). \]
\end{theorem}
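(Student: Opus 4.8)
The plan is to derive Theorem~\ref{mainthm:twialex} by applying Theorem~\ref{mainthmstrongintro} to the tensor representation $\varphi := \a\otimes\phi\colon \pi_1(N)\to\gl(d,\KK(F))$ and then untangling the dual representation and the hypothesis on $\partial N$. First I would record the standard fact that the tensor representation has dual $(\a\otimes\phi)^\d = \a^\d\otimes\phi$: indeed, for $g\in\pi_1(N)$ we have $\ol{(\a\otimes\phi)(g^{-1})}^t = \ol{\phi(g^{-1})\a(g^{-1})}^t = \ol{\phi(g^{-1})}\cdot\ol{\a(g^{-1})}^t = \phi(g)\cdot\a^\d(g)$, using that the involution on $\KK(F)$ sends $f\mapsto f^{-1}$ and acts as the involution of $\KK$ on the coefficients. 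Since $\a$ is conjugate to its dual by hypothesis, $\a\otimes\phi$ is conjugate to $\a^\d\otimes\phi = (\a\otimes\phi)^\d$, and so by Lemma~\ref{lem:indet}(3) we have $\tau(N,(\a\otimes\phi)^\d,\ss,\w) = \tau(N,\a\otimes\phi,\ss,\w)$ for every $\ss$ and $\w$.

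Next I would verify the hypothesis $H_*^{\a\otimes\phi}(\partial N;\KK(F)^d)=0$ needed to invoke Theorem~\ref{mainthmstrongintro}. When $\partial N=\emptyset$ this is vacuous. When $\partial N$ is a non-empty union of tori, each component $T$ satisfies $H_1(T;\Z)\cong\Z^2$, and since $\phi$ is admissible, $\phi$ restricted to $\pi_1(T)$ is non-trivial; a direct computation of the cellular chain complex of the universal (abelian) cover of $T$ with the standard CW structure (exactly as in the proof of Lemma~\ref{lem:canor}) shows that one of the two boundary maps is multiplication by an element of the form $\phi(x)\a(x)-I$ with $\phi(x)\ne 1$, hence invertible over $\KK(F)$; it follows that $C_*^{\a\otimes\phi}(T;\KK(F)^d)$ is acyclic. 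Therefore $H_*^{\a\otimes\phi}(\partial N;\KK(F)^d)=0$, and Theorem~\ref{mainthmstrongintro} applies and yields
\[
\ol{\tau(N,\a\otimes\phi,\ss,\w)} = (-1)^{db_0(\partial N)}\det\!\big((\a\otimes\phi)(c_1(\ss))\big)\cdot\tau(N,(\a\otimes\phi)^\d,\ss,\w).
\]

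Finally I would substitute the two facts assembled above: replace $\tau(N,(\a\otimes\phi)^\d,\ss,\w)$ by $\tau(N,\a\otimes\phi,\ss,\w)$, and expand $\det((\a\otimes\phi)(c_1(\ss))) = \phi(c_1(\ss))^d\cdot\det(\a(c_1(\ss)))$ using the formula $\det((\a\otimes\phi)(g)) = \phi(g)^d\det(\a(g))$ recorded just after the definition of the tensor representation. This gives precisely
\[
\ol{\tau(N,\a\otimes\phi,\ss,\w)} = (-1)^{db_0(\partial N)}\cdot\det(\a(c_1(\ss)))\cdot\phi(c_1(\ss))^d\cdot\tau(N,\a\otimes\phi,\ss,\w),
\]
as claimed. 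The only genuinely non-formal point is the acyclicity check on $\partial N$ using admissibility of $\phi$; everything else is a matter of matching up definitions (the dual of a tensor representation, the determinant formula, and Lemma~\ref{lem:indet}(3)). I do not expect any serious obstacle — the theorem is labelled "an almost immediate consequence" of Theorem~\ref{mainthmstrongintro}, and the work is entirely in correctly identifying $(\a\otimes\phi)^\d$ and in the boundary-torus computation.
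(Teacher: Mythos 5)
Your proposal is correct and follows exactly the paper's own route: apply Theorem~\ref{mainthmstrongintro} to $\a\otimes\phi$, check $(\a\otimes\phi)^\d=\a^\d\otimes\phi$ is conjugate to $\a\otimes\phi$, verify acyclicity on the boundary tori from admissibility, and expand $\det((\a\otimes\phi)(c_1(\ss)))=\phi(c_1(\ss))^d\det(\a(c_1(\ss)))$. The only difference is that you spell out the two steps the paper labels ``elementary,'' and you do so correctly.
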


\begin{proof}
An elementary argument shows that $\a\otimes \phi$ as a representation over $\FF=\KK(F)$ is conjugate to its dual.
Note that an elementary calculation using the fact that $\phi$ restricted to any boundary component is non-trivial shows that
\[ H_*^{\a\otimes \phi}(\partial N;\KK(F)^d)=0.\]
It now follows from
Theorem \ref{mainthmstrongintro} that
\[ \ba{rcl} \ol{\tau(N,\a \otimes \phi,\ss,\w)}&=&(-1)^{db_0(\partial N)}\cdot \det((\a\otimes \phi)(c_1(\ss)))\cdot  \tau(N,{(\a\otimes \phi)^\d},\ss,\w)\\
&=&(-1)^{db_0(\partial N)}\cdot \det(\a(c_1(\ss)))\cdot \phi(c_1(\ss))^d \cdot  \tau(N,\a\otimes \phi,\ss,\w).\ea \]
\end{proof}

The first statement of Theorem \ref{mainthm:twialexintro} now follows immediately, and the second statement of Theorem \ref{mainthm:twialexintro}
if we apply the above theorem to the $\spinc$-structure with trivial Chern class which exists on closed 3-manifolds by Lemma \ref{lem:c1}.
 Theorem \ref{mainthm:twialexlinkintro} is an immediate consequence of the above theorem and of Proposition \ref{prop:c1link}.
Now we prove theorem~\ref{thm:evendegreeintro}:

\begin{proof}[Proof of Theorem~\ref{thm:evendegreeintro}]
Let $S$ be a Thurston norm minimizing surface which is dual to $\phi$. Since $N$ is irreducible and since $N\ne S^1\times D^2$ we can arrange that $S$ has no disk components.
 We therefore have
\[ d\cdot x(\phi)\equiv d\cdot\chi_-(S) \equiv d\cdot b_0(\partial S)\, \mod\, 2.\]  Then for any $\ss\in\spinc (N)$, by \cite[Lemma~VI.1.2]{Tu02}, we have
$$
d\cdot b_0(\partial S) \equiv d\cdot(c_1(\ss)\cdot S) \,\mod\, 2
$$
where $c_1(\ss)\cdot S$ is the intersection number of $c_1(\ss)\in H_1(N)$ with $S$. Since $S$ is dual to $\phi$, we obtain that $d\cdot (c_1(\ss)\cdot S)  = d\cdot \phi(c_1(\ss))$.
 Therefore it suffices to show that $d\cdot \phi(c_1(\ss))\equiv \deg(\tau(N,\a\otimes \phi, \ss, \w))\,\mod\, 2$ for any homology orientation $\w\in \ort (N)$.

Suppose that $\tau(N,\a\otimes \phi, \ss, \w) = f(t)/g(t)$ for some $f(t), g(t)\in \KK[t^{\pm 1}]$, where $f(t) = \sum\limits_{i=n}^m a_it^i$ with $a_n,a_m\ne 0$ and $g(t) = \sum\limits_{j=q}^p b_jt^j$ with $b_p,b_q\ne 0$. In particular, $\deg(\tau(N,\a\otimes \phi, \ss, \w)) = (m-n)-(p-q)$. Then by Theorem~\ref{mainthm:twialex}, one easily obtains that
$$
\ol{f(t)}g(t) = c\cdot \phi(c_1(\ss))^d f(t) \ol{g(t)}
$$
for some $c\in \KK$. By looking at the highest exponents of both sides,  we now have $-n+p=d\cdot \phi(c_1(\ss))+ m -q$. Therefore
$$
d\cdot \phi(c_1(\ss)) \equiv -m-n+p+q \equiv \deg(\tau(N,\a\otimes \phi, \ss, \w))\,\mod\, 2.
$$

\end{proof}

\subsection{$\sltwoc$-representations and twisted Alexander polynomials}\label{section:sl2}
In this subsection we prove Theorem~\ref{thm:sltwocintro}.
We recall the following well-known lemma:
\begin{lemma}
Let $N$ be a 3-manifold with empty or toroidal boundary. Then a representation $\a:\pi_1(N)\to \sltwoc$ is conjugate to its dual.
\end{lemma}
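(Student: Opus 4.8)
The plan is to recall the standard fact that $\sltwoc = \{A \in \gl(2,\C) : \det A = 1\}$ carries the symplectic (skew-symmetric, non-degenerate bilinear) form $\omega(v,w) = v_1 w_2 - v_2 w_1$ on $\C^2$, and that every element of $\sltwoc$ preserves $\omega$. Concretely, if $J = \mysmallmatrix{0}{1}{-1}{0}$, then $\det A = 1$ is equivalent to $A^t J A = J$. First I would verify this identity by a one-line computation: for $A = \mysmallmatrix{a}{b}{c}{d}$ one has $A^t J A = (ad-bc) J$, so $A^t J A = J \iff \det A = 1$.

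Next I would translate this into the language of the paper. Since $\C$ is equipped with the trivial involution, the dual representation is $\a^\d(g) = \a(g^{-1})^t$. From $\a(g)^t J \a(g) = J$ we get $\a(g^{-1})^t = J \a(g) J^{-1}$ for all $g \in \pi_1(N)$, i.e. $\a^\d(g) = J\,\a(g)\,J^{-1}$. This exhibits $P := J$ as a conjugating matrix with $\a^\d(g) = P\,\a(g)\,P^{-1}$ for all $g$, which is exactly the definition of $\a$ being conjugate to its dual. (Equivalently, one can invoke \cite[Lemma~3.1]{HSW10} cited in the introduction: $\omega$ is a non-degenerate sesquilinear form — sesquilinear with respect to the trivial involution, hence bilinear — on $\C^2$ for which $\a$ acts by isometries.)

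There is essentially no obstacle here: the only mild point to be careful about is the role of the trivial involution on $\C$, which is what makes the \emph{bilinear} symplectic form count as a \emph{sesquilinear} form in the sense of the paper, and hence what makes ``conjugate to its dual'' the right notion; this is precisely why the statement is phrased with $\C$ carrying the trivial involution, matching item (3) in the list in the introduction. I would write the proof as the short computation $A^t J A = (\det A) J$ followed by the rearrangement $\a^\d(g) = J \a(g) J^{-1}$, and note that $J \in \gl(2,\C)$ is invertible, so this is a genuine conjugation. No further results from the excerpt are needed.

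\begin{proof}
Let $J = \mysmallmatrix{0}{1}{-1}{0} \in \gl(2,\C)$. A direct computation shows that for any $A = \mysmallmatrix{a}{b}{c}{d} \in \gl(2,\C)$ we have $A^t J A = (ad-bc)\,J = (\det A)\,J$. Hence for $A \in \sltwoc$ we get $A^t J A = J$, and therefore $A^t = J A^{-1} J^{-1}$, i.e. $(A^{-1})^t = J A J^{-1}$.

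Now let $\a\colon \pi_1(N) \to \sltwoc$ be a representation. Since we view $\C$ as equipped with the trivial involution, the dual representation is given by $\a^\d(g) = \ol{\a(g^{-1})}^t = \a(g^{-1})^t$. By the computation above, $\a(g^{-1})^t = J\,\a(g)\,J^{-1}$ for every $g \in \pi_1(N)$, so $\a^\d(g) = J\,\a(g)\,J^{-1}$ for all $g$. As $J$ is invertible, this shows that $\a$ is conjugate to its dual.
\end{proof}
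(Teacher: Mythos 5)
Your proof is correct and follows essentially the same route as the paper: the paper equips $\C^2$ with the bilinear form $(v,w)\mapsto\det(v\,w)=v_1w_2-v_2w_1$, which is exactly your symplectic form $\omega$, and then invokes \cite[Lemma~3.1]{HSW10}, whereas you simply make the conjugating matrix $J$ explicit via the identity $A^tJA=(\det A)J$. Both arguments are fine; yours is self-contained where the paper's defers to the cited lemma.
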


\begin{proof}
 We equip $\C^2$ with the bilinear form
$(v,w)\mapsto \det(v\, w)$. It is clear that $\sl(2,\C)$ acts by isometries on the form,
hence by \cite[Lemma~3.1]{HSW10} any representation $\a\colon \pi_1(N)\to \sl(2,\C)$ is dual to its conjugate.
\end{proof}

\begin{proof}[Proof of Theorem~\ref{thm:sltwocintro}]
Since $\a$ is irreducible it follows from Theorem \ref{thm:taupoly} that any loose representative of  $\tau(N,\a\otimes \phi)$ lies in $\ct$.
By the previous lemma the representation $\a$ is conjugate to its dual.
Let $\ss\in \spinc(N)$ and $\w\in\ort(N)$. It follows from
Theorem \ref{mainthm:twialex} that
\[ \ol{\tau(N,\a\otimes \phi,\ss,\w)}=  \phi(c_1(\ss))^2\cdot \tau(N,\a\otimes \phi,\ss,\w). \]
We thus see that
\[ \tau:= \phi(c_1(\ss))\cdot \tau(N,\a\otimes \phi,\ss,\w)\]
has the desired properties.
\end{proof}

We conclude this section with a short excursion into the representation theory of $\sltwoc$.
Given a vector space $V$ of dimension $n$ we define
\[ \sym^k(V):=V^{\otimes k}/\sim \]
where $\sim$ is the equivalence relation generated by
\[ v_1\otimes \dots \otimes v_i\otimes \dots \otimes v_j \otimes \dots \otimes v_k=
v_1\otimes \dots \otimes v_j\otimes \dots \otimes v_i \otimes \dots \otimes v_k.\]
Note that $\dim(\sym^k(V))$ equals the number of elements in the set
\[ P_k:=\{ (e_1,\dots,e_k)\, |\, 1\leq e_1\leq e_2\leq \dots \leq e_k\leq n\}.\]
Put differently, $\dim(\sym^k(V))$ equals the number of Young tableaux of size $n$.
Note that if $n=2$, then $\dim(\sym^k(V))=k+1$.

Note that given any $k$ there exists a canonical representation
\[ \ba{rcl} \rho_k\colon \aut(V)&\to & \aut(\sym^k(V))\\
\varphi &\mapsto & (v_1\otimes \dots \otimes v_k\mapsto \varphi(v_1)\otimes \dots \otimes \varphi(v_k)).\ea \]
It is well-known that any irreducible representation of $\sltwoc$ is isomorphic to one of the $\rho_k$.
Given a representation $\a\colon \pi\to \sltwoc$ we denote the representation
$\pi\to \aut(\C^2)\xrightarrow{\rho_k} \aut(\sym^k(\C^2))$ by $\a_k$.
Note that $\a_1=\a$ and that $\a_2$ equals the adjoint representation.

Given a 3-manifold $N$ and a representation $\a:\pi_1(N)\to \sltwoc$ the torsions and twisted Alexander polynomials corresponding to
the representations $\a_k, k\geq 2$ have recently been studied in detail, see e.g. \cite{DY09} and
see \cite{Mu09,Mu10} for the relationship of such torsions to the volume of a hyperbolic 3-manifold.
The following well-known lemma says that Theorem \ref{mainthm:twialex} also applies to such representations.

\begin{lemma}\label{lem:akconjdual}
Let $\KK$ be a field with involution with  characteristic zero, and let $\varphi:\pi_1(N)\to \gl(d,\KK)$ be a representation which is conjugate to its dual, then
 $\a_k$ is also conjugate to its dual.
 \end{lemma}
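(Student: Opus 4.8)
The plan is to show that the property "conjugate to its dual" is preserved under the functor $\rho_k$ which sends a representation $\varphi$ on $V$ to the representation $\rho_k(\varphi)$ on $\sym^k(V)$. By \cite[Lemma~3.1]{HSW10}, $\varphi$ being conjugate to its dual is equivalent to the existence of a non-degenerate sesquilinear form $B$ on $\KK^d$ such that $\varphi(g)$ is an isometry of $B$ for all $g\in\pi_1(N)$. So the goal reduces to: given such a $B$ on $V=\KK^d$, produce a non-degenerate sesquilinear form $B_k$ on $\sym^k(V)$ for which every $\rho_k(\varphi(g))$ is an isometry.

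First I would construct $B_k$ on the tensor power $V^{\otimes k}$ by the formula $B_k(v_1\otimes\cdots\otimes v_k,\ w_1\otimes\cdots\otimes w_k)=\prod_{i=1}^{k}B(v_i,w_i)$, extended sesquilinearly. Since $B$ is sesquilinear (conjugate-linear in, say, the first variable), the product $B_k$ is again sesquilinear over $\KK$; non-degeneracy of $B_k$ on $V^{\otimes k}$ follows from non-degeneracy of $B$ by choosing a basis of $V$ in which $B$ has an invertible Gram matrix $G$ and noting the Gram matrix of $B_k$ in the induced tensor basis is $G^{\otimes k}$, which is invertible. Next I would check that $B_k$ descends to $\sym^k(V)=V^{\otimes k}/\!\sim$. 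The key point here is that the symmetrizer on $V^{\otimes k}$ is $B_k$-self-adjoint (because $B_k$ is visibly invariant under simultaneously permuting the tensor factors of both arguments), so $\sym^k(V)$, realized as the image of the symmetrizing idempotent, inherits a non-degenerate form — this is where the characteristic zero hypothesis enters, since $\sym^k(V)$ is a direct summand of $V^{\otimes k}$ only after inverting $k!$. Finally, that each $\rho_k(\varphi(g))$ is an isometry of $B_k$ is immediate: $B_k(\varphi(g)v_1\otimes\cdots\otimes\varphi(g)v_k,\ \varphi(g)w_1\otimes\cdots\otimes\varphi(g)w_k)=\prod_i B(\varphi(g)v_i,\varphi(g)w_i)=\prod_i B(v_i,w_i)$, using that $\varphi(g)$ is a $B$-isometry; this passes to the quotient. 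Applying \cite[Lemma~3.1]{HSW10} in the reverse direction then gives that $\rho_k(\varphi)$, and hence $\a_k$, is conjugate to its dual.

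The main obstacle, and the only genuinely delicate point, is the descent of the form to $\sym^k(V)$: one must be careful that the quotient $V^{\otimes k}\to\sym^k(V)$ is compatible with $B_k$ in the right sense, i.e. that the radical of $B_k$ restricted to the symmetric summand is trivial. The clean way to handle this is to identify $\sym^k(V)$ with the subspace of symmetric tensors (valid in characteristic zero) rather than with the quotient, restrict $B_k$ to that subspace, and use that $V^{\otimes k}$ decomposes $B_k$-orthogonally according to the isotypic components of the $S_k$-action — since distinct irreducible $S_k$-representations are $B_k$-orthogonal, the restriction of $B_k$ to the trivial-isotypic (symmetric) part is non-degenerate. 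All remaining verifications are routine multilinear algebra.
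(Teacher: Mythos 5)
Your proposal is correct, and its skeleton coincides with the paper's: both reduce via \cite[Lemma~3.1]{HSW10} to producing a non-degenerate sesquilinear form on $\sym^k(V)$ preserved by $\a_k$. The difference lies in how the form is built and how non-degeneracy is verified. The paper works directly on the quotient $\sym^k(V)$ with the symmetrized form $(v_1\otimes\cdots\otimes v_k,w_1\otimes\cdots\otimes w_k)\mapsto \sum_{\s\in S_k}\prod_i\ll v_i,w_{\s(i)}\rr$, and proves non-degeneracy concretely: it picks dual bases $\ll v_i,w_j\rr=\delta_{ij}$ and checks that the vectors $v_e,w_f$ indexed by $P_k$ pair diagonally, with the diagonal entries being positive integers (products of factorials), which is where characteristic zero enters. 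You instead take the unsymmetrized product form on $V^{\otimes k}$, get non-degeneracy there from the invertibility of $G^{\otimes k}$, and then restrict to the symmetric tensors using the $B_k$-self-adjointness of the symmetrizing idempotent (equivalently, the orthogonality of the $S_k$-isotypic decomposition); characteristic zero enters for you in splitting off the symmetric summand. The two forms agree on symmetric tensors up to an overall factor, so the constructions are really the same invariant; your verification is more structural and generalizes readily to other Schur functors, while the paper's explicit dual-basis computation is more elementary and makes the role of characteristic zero (non-vanishing of the factorial coefficients) completely transparent. Both arguments are complete and correct.
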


\begin{proof}
We write $V=\KK^d$. By   \cite[Lemma~3.1]{HSW10} there exists a non-degenerate sesquilinear
 form $\ll\,,\,\rr$ on $\KK^d$ such that $\a$ acts by isometries.
 Now consider the form
 \[ \ba{rcl} \sym^k(V)\times \sym^k(V) &\to & \KK \\
( v_1\otimes \dots \otimes v_k , w_1\otimes \dots \otimes w_k)&\mapsto &
\sum_{\s\in S_k}\prod_{i=1}^k\ll v_i,w_{\s(i)}\rr.\ea \]
It is clear that this is a sesquilinear form on $\sym^d(V) $ and that $\a_k$ preserves the form.
It remains to show that this form is non-singular.

Let $v_1,\dots,v_d$ be a basis for $V$. Since $\ll\,,\,\rr$  is non-degenerate we can find vectors $w_1,\dots,w_d$ such that $\ll v_i,w_j\rr=\delta_{ij}$. Given $e=(e_1,\dots,e_k)\in P_k$ we define
\[ v_e:=v_{e_1}\otimes \dots \otimes v_{e_k}\mbox{ and }w_e:=w_{e_1}\otimes \dots \otimes w_{e_k}.\]
It is straightforward to verify that given $e,f\in P_k$ we have $\ll v_e,w_f\rr=0$ if $e\ne f$
and $\ll v_e,w_f\rr\ne 0$ otherwise. (Note that for the latter conclusion we used the assumption that $\KK$ has characteristic zero.)
\end{proof}

\section{Reidemeister torsion for irreducible non-abelian representations}
\label{section:polynomial}

Let $N$ be a 3-manifold with empty or toroidal boundary, $\a\colon\pi=\pi_1(N)\to \gl(d,\KK)$ a representation  over a field $\KK$ and $\phi\colon\pi\to F$ an epimorphism onto a free abelian group.
The Reidemeister torsion $\tau(N,\a\otimes \phi)\in \KK(F)$ is in general a rational function.
On the other hand the following theorem shows  that in many cases $\tau(N,\a\otimes \phi)$ actually lies in $\KK[F]$.

\begin{theorem}
\label{thm:taupoly}
Let $N$, $\a$, and $\phi$ be as above.
Suppose one of the following holds:
\bn
\item $\mbox{rank}(F)\geq 2$, or
\item $\a$ is  an irreducible  representation
which is non--trivial when restricted to $\ker(\phi)$,
\en
then $\tau(K,\a\otimes \phi)\in \KK(F)$ is a polynomial, i.e. it lies in $\KK[F]$.
\end{theorem}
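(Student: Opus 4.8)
The plan is to pass from the quotient field $\KK(F)$ back to the group ring $R:=\KK[F]$ and to study the twisted homology $R$-modules $H_i^{\a\otimes\phi}(N;R^d)$. First I would dispose of the trivial case: if some---equivalently, by (\ref{equ:indet}), every---representative of $\tau(N,\a\otimes\phi)$ vanishes, then $\tau(N,\a\otimes\phi)=\{0\}\subset\KK[F]$ and there is nothing to prove. So assume the representatives are nonzero; then $C_*^{\a\otimes\phi}(N;\KK(F)^d)$ is acyclic, which is equivalent to each $H_i^{\a\otimes\phi}(N;R^d)$ being a torsion $R$-module. Since $R=\KK[F]$ is a Noetherian UFD, the classical formula expressing Reidemeister torsion through the orders of the twisted homology modules (see e.g.\ \cite{Tu01,FV10}) gives, for a representative $\tau$,
\[ \tau\ \doteq\ \prod_i\ \ord_R\!\big(H_i^{\a\otimes\phi}(N;R^d)\big)^{(-1)^{i+1}} \]
up to a unit of $R$, the right-hand side being computed from any convenient CW structure on $N$. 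Hence it suffices to prove that $\ord_R(H_0)$, $\ord_R(H_2)$ and, when $N$ is closed, $\ord_R(H_3)$ are units of $R$; then $\tau\doteq\ord_R(H_1)\in R$ and every representative of $\tau(N,\a\otimes\phi)$ lies in $\KK[F]$.

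Next I would remove $H_2$ and $H_3$ from the picture. If $\partial N\neq\emptyset$, then $N$ is homotopy equivalent to a $2$-dimensional complex, so $H_2^{\a\otimes\phi}(N;R^d)$ is a submodule of a free $R$-module; being simultaneously torsion-free and torsion it vanishes, and $H_3=0$ as well. If $N$ is closed, I would combine Poincar\'e duality with the universal coefficient theorem over $R$ to express $H_2$ through $\Hom_R$ and $\Ext^1_R$ of $H_1$ and $H_0$ (with coefficients in an appropriate, possibly dual, local system) and $H_3$ through the $\pi$-invariants of $R^d$: since $H_1$ is torsion we have $\Hom_R(H_1,R)=0$, and $\ord_R\Ext^1_R(M,R)=\ord_R(M)$ for a torsion module $M$ over a Noetherian UFD, so $\ord_R(H_2)=\ord_R(H_0)$. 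In all cases the problem is thus reduced to bounding $\ord_R$ of the $\pi$-coinvariants $H_0=(R^d)_\pi$ and, in the closed case, of the $\pi$-invariants $(R^d)^\pi$.

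This is the heart of the matter. Restricting the $\pi$-action on $R^d$ to $\pi_0:=\ker(\phi)$, one has $(R^d)_{\pi_0}=R\otimes_\KK(\KK^d)_{\pi_0}$, on which $F=\pi/\pi_0$ acts diagonally: by multiplication on $R$ and by the representation on $(\KK^d)_{\pi_0}$ induced by $\a$ (this is well defined because $\pi_0$ is normal in $\pi$ and acts trivially on the coinvariants). Untwisting this diagonal action identifies $H_0=(R^d)_\pi$ with the finite-dimensional $\KK$-vector space $(\KK^d)_{\pi_0}$ endowed with the $R$-module structure coming from a representation $F\to\gl(k,\KK)$, where $k=\dim_\KK(\KK^d)_{\pi_0}$; similarly $(R^d)^\pi\cong R\otimes_\KK(\KK^d)^{\pi_0}$ reduces to $(\KK^d)^{\pi_0}$ with an analogous structure. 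In particular $H_0$ is a torsion $R$-module whose support is a $0$-dimensional closed subset of $\mathrm{Spec}\,R$. In case (1), $\rank(F)\geq 2$, so this support has codimension $\geq 2$ in $\mathrm{Spec}\,R$, and a finitely generated torsion module supported in codimension $\geq 2$ has order a unit of $R$; hence $\ord_R(H_0)$ and $\ord_R\big((R^d)^\pi\big)$ are units and the theorem follows. In case (2), $\a$ is irreducible and $\a|_{\pi_0}$ is non-trivial: since $\pi_0$ is normal in $\pi$, a nonzero $\pi_0$-invariant vector of $\KK^d$ would generate a proper nonzero $\pi$-subrepresentation, contradicting irreducibility, so $(\KK^d)^{\pi_0}=0$; applying the same reasoning to the $\KK$-linear dual of $\a$ (again $\pi$-irreducible and non-trivial on $\pi_0$), and using that the coinvariants of a finite-dimensional representation have the same $\KK$-dimension as the invariants of its dual, one gets $(\KK^d)_{\pi_0}=0$ as well. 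Thus $H_0=0$ and $(R^d)^\pi=0$, and once more the relevant orders are units.

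The step I expect to be the main obstacle is the closed-manifold bookkeeping in the second paragraph: pinning down precisely which (dual) local system appears after Poincar\'e duality and running the universal coefficient theorem over the non-principal ring $R=\KK[F]$ so as to extract $H_2$ and $H_3$ in terms of the invariants and coinvariants of $R^d$. Once the computation of $H_0$ is in hand, the case $\partial N\neq\emptyset$ is immediate, and the remainder of the argument is a routine assembly of standard facts about orders of finitely generated modules over a Noetherian UFD.
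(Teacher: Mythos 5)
Your proof is correct, and its skeleton matches the paper's: everything is reduced to showing that the order of the degree-$0$ twisted homology (and, via duality, of $H_2$ and $H_3$) is a unit of $\KK[F]$, with the key vanishing in case (2) coming from irreducibility combined with the normality of $\ker(\phi)$. The differences lie in the execution. The paper does not redo the Poincar\'e duality/universal-coefficient bookkeeping of your second paragraph: it simply quotes Proposition \ref{prop:deltataualpha} (itself imported from Kirk--Livingston, Turaev and \cite{FK06}), which already packages $\tau$ as $\Delta_1/(\Delta_0^{\a}\Delta_0^{\b})$ in the closed case and $\Delta_1/\Delta_0^{\a}$ otherwise; your paragraph re-derives exactly that content. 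For the heart of the matter, the paper first reduces case (2) to $F=\Z$ using case (1), and then proves in Proposition \ref{prop:delta0} that $\Delta_0$ is a unit by choosing a nondegenerate form, passing to the $\ker(\phi)$-invariants of the dual representation, and running an explicit block-matrix computation with a generator $\mu$ of $\Z$ to exhibit an invariant subspace; your argument --- the invariants of a normal subgroup form a $\pi$-subrepresentation, together with $\dim_\KK V_{\pi_0}=\dim_\KK (V^*)^{\pi_0}$ --- is the coordinate-free version of the same computation and has the advantage of working uniformly in any rank. Finally, for case (1) the paper only cites \cite{FK08b} and \cite{Wa94}, whereas your codimension-$\geq 2$ support argument is a self-contained proof of that case. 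One small imprecision to watch: over the non-principal ring $R=\KK[F]$ the universal-coefficient spectral sequence only exhibits the relevant cohomology as a subquotient of $\Ext^1_R(H_0,R)$, so in general you obtain $\ord(H_2)\mid \ord(H_0)$ rather than equality --- but divisibility is all your argument needs, since in both cases the right-hand side is a unit.
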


If $\mbox{rank}(F)\geq 2$  then the conclusion follows from
earlier work by two of the authors \cite[Lemma~6.2,~Lemma~6.5,~Theorem~6.7]{FK08b}), see also
 \cite[Proposition~9]{Wa94}.
The second part generalizes a result by Wada (see  \cite[Proposition~8]{Wa94}).
 We also refer to\cite[Theorem~1.1]{KM05} for an interesting result regarding twisted torsion of knots for $\sl(2,\C)$-representations.

The proof of the second part of the theorem will require the remainder of this section.
Note that by (1) it suffices to consider the case that $\phi$ is an epimorphism onto $\Z$.
We find it convenient to rephrase the theorem in the language of twisted Alexander polynomials, the definition of which we now recall.
Let $X$ be  a topological space, $\phi\colon\pi_1(X)\to \Z$ an epimorphism  and
$\a\colon\pi_1(X)\to \gl(d,\KK)$ a representation over a  field $\KK$. Recall that we can now define a tensor
representation $\a\otimes \phi\colon\pi_1(X)\to \gl(d,\KK\tpm)$.
We obtain a twisted module  $H_i^{\a\otimes \phi}(X;\KK\tpm^d)$ over the ring $\KK\tpm$.
 Note that  $H_i^{\a}(X;\KK\tpm^d)$ is a finitely generated module over
 $\KK\tpm$. We now denote by $\Delta^{\a}_{X,\phi,i}\in \KK\tpm$ the order of
  $H_i^{\a\otimes \phi}(X;\KK\tpm^d)$ and refer to it as the \emph{$i$--th twisted Alexander order of $(X,\phi,\a)$}.
 We refer to \cite{Tu01} or \cite[Section~2]{FV10}  for the precise definitions.
Note that the twisted Alexander polynomials are well--defined up to multiplication by a unit in the ring $\KK\tpm$.

Twisted  torsions and twisted Alexander polynomials are closely related invariants
as the following well--known proposition  shows.
(We refer to \cite{KL99}, \cite[Theorem~4.7]{Tu01} and \cite[Proposition~2.5~(1)~and~Lemma~2.8]{FK06} for details.)

\begin{proposition}\label{prop:deltataualpha}
Let $N$ be a 3-manifold with empty or toroidal boundary. Let
$\a\colon\pi=\pi_1(N)\to \gl(d,\KK)$ be  a  representation over a field  $\KK$.
We denote the dual of $\a$ by $\b$.
Suppose that $ \tau(N,{\a}\otimes \phi )\ne 0$.
Then the following hold:
\bn
\item If $N$ is closed, then  $\Delta_{N,\phi,0}^\a\neq 0$,  $\Delta_{N,\phi,0}^\b\neq 0$
and
\[ \tau(N,{\a}\otimes \phi )=  \frac{\Delta^{{\a}}_{N,\phi,1}}{\Delta^{{\a}}_{N,\phi,0}\cdot \Delta_{N,\phi,0}^\b}\in \KK(t)\]
up to multiplication by a unit in $\KK\tpm$.
\item If $N$ has non-trivial boundary, then  $\Delta_{N,\phi,0}^\a\neq 0$
and
\[ \tau(N,{\a}\otimes \phi )=  \frac{\Delta^{{\a}}_{N,\phi,1}}{\Delta^{{\a}}_{N,\phi,0}}\in \KK(t)\]
up to multiplication by a unit in $\KK\tpm$.
\en
\end{proposition}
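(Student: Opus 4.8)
The plan is to descend from the field $\KK(t)$ to the principal ideal domain $R:=\KK\tpm$ and then invoke the classical identity relating Reidemeister torsion to the orders of the homology modules. First I would fix a finite CW structure (e.g.\ a triangulation) on $N$, identify $\KK[\Z]=R$, and consider the twisted chain complex $C_*:=C_*^{\a\otimes\phi}(N;R^d)$, a bounded complex of finitely generated free $R$-modules with $H_i(C_*)=H_i^{\a\otimes\phi}(N;R^d)$. Since $R$ is a PID, tensoring with the fraction field $\KK(t)$ is exact, so $C_*\otimes_R\KK(t)$ is exactly the complex computing $\tau(N,\a\otimes\phi,\ss,\w)$, and $H_i(C_*\otimes_R\KK(t))=H_i(C_*)\otimes_R\KK(t)$. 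By the definition of the torsion, the hypothesis $\tau(N,\a\otimes\phi)\ne0$ says precisely that $C_*\otimes_R\KK(t)$ is acyclic; hence every $H_i(C_*)$ becomes $0$ after localization, i.e.\ is a torsion $R$-module, and so $\Delta^\a_{N,\phi,i}=\ord(H_i(C_*))\ne0$ for all $i$.

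Next I would apply the standard algebraic lemma (Milnor \cite{Mi66}; Turaev \cite[Theorem~4.7]{Tu01}): for a bounded complex $C_*$ of finitely generated free modules over a PID $R$ whose localization $C_*\otimes_R\KK(t)$ is acyclic, one has $\tau(C_*\otimes_R\KK(t))\doteq\prod_i\ord(H_i(C_*))^{(-1)^{i+1}}$, equality up to a unit of $R$. Since the sign refinement and the choices of $\spinc$-structure and homology orientation alter $\tau(N,\a\otimes\phi,\ss,\w)$ only by units of $R$ (signs, powers of $t$, and elements of $\det\a(\pi)\subset\KK^\times$), this gives
\[ \tau(N,\a\otimes\phi)\ \doteq\ \prod_i\big(\Delta^\a_{N,\phi,i}\big)^{(-1)^{i+1}}. \]
It remains only to evaluate the factors in degrees $\ge2$.

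Suppose first that $N$ is closed. Then $H_3(C_*)=\ker(\partial\colon C_3\to C_2)$ is a submodule of a finitely generated free module over a PID, hence free; being also torsion, it vanishes, so $\Delta^\a_{N,\phi,3}$ is a unit. For the degree-two factor I would combine Poincar\'e duality for twisted coefficients with the universal coefficient theorem over $R$: Poincar\'e duality identifies $H_i^{\a\otimes\phi}(N;R^d)$ with $H^{3-i}_{\b\otimes\phi}(N;R^d)$, the cohomology twisted by the dual representation $\b$ (up to the substitution $t\mapsto t^{-1}$, which is a unit of $R$), and the universal coefficient theorem over the PID $R$, together with $\Hom_R(M,R)=0$ and $\ord(\Ext^1_R(M,R))=\ord(M)$ for a finitely generated torsion module $M$, shows $\ord(H^{3-i}_{\b\otimes\phi}(N;R^d))=\ord(H_{2-i}^{\b\otimes\phi}(N;R^d))$. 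Taking $i=2$, this reads $\Delta^\a_{N,\phi,2}\doteq\Delta^\b_{N,\phi,0}$, and in particular $\Delta^\b_{N,\phi,0}\ne0$ since $\Delta^\a_{N,\phi,2}\ne0$. Feeding $\Delta^\a_{N,\phi,3}$ (a unit) and $\Delta^\a_{N,\phi,2}\doteq\Delta^\b_{N,\phi,0}$ into the displayed product yields $\tau(N,\a\otimes\phi)\doteq\Delta^\a_{N,\phi,1}/\big(\Delta^\a_{N,\phi,0}\cdot\Delta^\b_{N,\phi,0}\big)$, as asserted.

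If instead $\partial N\ne\emptyset$, then $N$ has the homotopy type of a finite $2$-complex, so $C_*$ may be taken with $C_i=0$ for $i\ge3$; then $H_2(C_*)=\ker(\partial\colon C_2\to C_1)$ is free and torsion, hence zero, while $H_i(C_*)=0$ for $i\ge3$ trivially. Thus only degrees $0$ and $1$ survive, and $\tau(N,\a\otimes\phi)\doteq\Delta^\a_{N,\phi,1}/\Delta^\a_{N,\phi,0}$. The main obstacle in this argument is the bookkeeping in the closed case: one must propagate the correct conjugate--dual representation $\b$ and the $t\mapsto t^{-1}$ substitution through Poincar\'e duality and the universal coefficient theorem, and check that every ambiguity produced along the way is a genuine unit of $\KK\tpm$, so that the stated equality holds up to a unit. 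These details are carried out in \cite{KL99} and \cite[Proposition~2.5~(1)~and~Lemma~2.8]{FK06}.
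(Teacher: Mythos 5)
Your proof is correct and follows essentially the route the paper intends: the paper gives no argument for this proposition, deferring to exactly the results you invoke, namely Turaev's Theorem~4.7 (torsion equals the alternating product of the orders $\Delta^\a_{N,\phi,i}$ up to units of $\KK\tpm$) together with the Poincar\'e duality/universal coefficient identification $\Delta^\a_{N,\phi,2}\doteq\Delta^\b_{N,\phi,0}$ and the vanishing of the top homology, as in \cite{KL99} and \cite[Proposition~2.5~(1)~and~Lemma~2.8]{FK06}. The one step you explicitly leave to those references --- propagating the dual representation and the $t\mapsto t^{-1}$ substitution through duality in the closed case --- is precisely the step the paper also leaves to them, so there is nothing to add.
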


Our main technical result of this section is now the following proposition, which we phrase in a slightly more general language than strictly necessary, hoping that the lemma will be of independent interest.

\begin{proposition}\label{prop:delta0}
Let $X$ be a topological space. We write $\pi=\pi_1(X)$.
 Suppose $\phi\colon\pi\to \Z$ is a non-trivial homomorphism such that $\ker(\phi)$ is non--trivial and let $\a\colon\pi\to \gl(d,\KK)$ be an  irreducible representation over a field $\KK$
 which is non--trivial when restricted to $\ker(\phi)$.
Then $ \Delta_{X,\phi,0}^\a$ is a unit in $\KK\tpm$.
\end{proposition}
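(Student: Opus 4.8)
The plan is to show that $H_0^{\a\otimes\phi}(X;\KK\tpm^d)=0$, which immediately implies the claim since a zero module has order $1$, a unit in $\KK\tpm$. Write $R=\KK\tpm$ and let $M=R^d$, regarded as a $\Z[\pi]$--module via $\a\otimes\phi$, so that $g\in\pi$ acts as $v\mapsto\phi(g)\,\a(g)v$. Since the tensor product is right exact and $X$ is connected, $H_0^{\a\otimes\phi}(X;\KK\tpm^d)$ is canonically the module of coinvariants $M_\pi:=M/\langle g\cdot v-v\mid g\in\pi,\ v\in M\rangle$; this is the starting observation, and everything reduces to showing $M_\pi=0$.

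The first real step is to pass to $K:=\ker(\phi)$. Since the augmentation ideal of $\Z[K]$ sits inside that of $\Z[\pi]$, the module $M_\pi$ is a quotient of the $K$--coinvariants $M_K$, so it suffices to show $M_K=0$. The point of replacing $\pi$ by $K$ is that $\phi$ is trivial on $K$, so on $K$ the twisted action is just $\a|_K$: writing $M=\KK\tpm\otimes_\KK\KK^d$ with $K$ acting only on the second factor, one gets $M_K=\KK\tpm\otimes_\KK(\KK^d)_K$, where $(\KK^d)_K$ is the space of coinvariants of the $\KK[K]$--representation $\KK^d$. Thus the problem reduces to the purely linear-algebra assertion $(\KK^d)_K=0$.

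To prove that, let $W\subseteq\KK^d$ be the $\KK$--span of the vectors $(\a(k)-\id)v$ with $k\in K$ and $v\in\KK^d$, so that $(\KK^d)_K=\KK^d/W$. The key computation, and the only place normality of $K$ is used, is that $W$ is $\a(\pi)$--invariant:
\[ \a(g)\big((\a(k)-\id)v\big)=(\a(gkg^{-1})-\id)(\a(g)v)\in W\qquad(g\in\pi,\ k\in K).\]
Now irreducibility of $\a$ forces $W=0$ or $W=\KK^d$; and $W=0$ would mean $\a(k)=\id$ for all $k\in K$, contradicting the hypothesis that $\a|_K$ is nontrivial. Hence $W=\KK^d$, so $(\KK^d)_K=0$, hence $M_K=0$, hence $M_\pi=0$, and $\Delta^\a_{X,\phi,0}$ is a unit.

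I do not expect a serious obstacle here: the argument is a short chain of reductions followed by one use of irreducibility. What needs the most care is the homological bookkeeping of the first two paragraphs---the identification of $H_0$ with $M_\pi$, the fact that $M_\pi$ is a quotient of $M_K$, and the factorization $M_K=\KK\tpm\otimes_\KK(\KK^d)_K$ (which uses that $\KK\tpm$ is free, hence flat, over $\KK$)---together with invoking connectedness of $X$ at the point where $H_0(\wti X;\Z)=\Z$ with trivial $\pi$--action is used. The real content is the normality argument of the third paragraph, which is exactly what lets the irreducibility of the full representation $\a$ be brought to bear even though the hypothesis only says $\a|_K$ is nontrivial.
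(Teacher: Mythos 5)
Your proof is correct, but it takes a genuinely different and more direct route than the paper's. The paper argues by contraposition: it assumes $H_0^{\a\otimes\phi}\neq 0$, passes to the dual representation $\a^\d$ via a non-degenerate form, identifies $H^0_{\a^\d}(\G;\KK^d)$ with $H_0^\a(\G;\KK^d)$ through an explicit chain-level duality and the universal coefficient theorem, and then performs a block-matrix computation with a coset representative $\mu$ of $\pi/\ker(\phi)$ to manufacture a proper invariant subspace for $\a$. You instead work entirely with coinvariants: after the (correct) reductions $H_0=M_\pi$, $M_\pi$ a quotient of $M_K$, and $M_K=\KK\tpm\otimes_\KK(\KK^d)_K$, the whole content is that the subspace $W=\operatorname{span}\{(\a(k)-\id)v\}$ is $\a(\pi)$-invariant because $K=\ker(\phi)$ is normal, so irreducibility forces $W=\KK^d$ once $\a|_K$ is non-trivial. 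This replaces the paper's dualization and matrix manipulation with a one-line invariance check, and it makes transparent where each hypothesis enters (normality for invariance of $W$, non-triviality of $\a|_K$ to exclude $W=0$, irreducibility to conclude). The only point worth stating explicitly, which you do handle, is that the order of a finitely generated $\KK\tpm$-module is a unit precisely when the module vanishes, so proving $H_0=0$ does establish the proposition. Both arguments exploit the same underlying idea---normality of $\ker(\phi)$ is what lets irreducibility of the full representation act on data attached only to the kernel---but yours is the cleaner implementation.
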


Note that  Theorem \ref{thm:taupoly} is now an immediate consequence of Propositions
\ref{prop:deltataualpha}  and  \ref{prop:delta0}.

\begin{proof}[Proof~of~Proposition~\ref{prop:delta0}]
Suppose that  $\a\colon\pi\to \gl(d,\KK)$ is a representation over a field $\KK$
which is non--trivial when restricted to $\ker(\phi)$ and
such that $ \Delta_{X,\phi,0}^\a\neq 1\in \KK\tpm$ (up to multiplication by a unit in $\KK\tpm$). We will show that $\a$ is not irreducible.
 We write $\G=\ker(\phi)$ and we pick $\mu\in \pi$ such that $\phi(\mu)$ generates the image of $\phi$.
We denote by $\a\otimes \phi\colon\pi\to \aut(\KK\tpm^d)$ the tensor representation.
First recall that $\Delta_{X,\phi,0}^\a=1$ if and only if $H_0^{\a\otimes \phi}(\pi;\KK\tpm^d)=0$
(cf. e.g. \cite[Lemma~2.2]{FK06}).
Also recall (cf. \cite[Section~VI]{HS97}) that
\[ H_0^{\a\otimes \phi}(\pi;\KK\tpm^d) =\KK\tpm^d/((\a \otimes \phi)(g)(v)-v \, |\, v\in \KK\tpm^d, g\in \pi).\]
By our assumption we have $H_0^{\a\otimes \phi}(\pi;\KK\tpm^d)\neq 0$. It is straightforward to see that this implies that
$H_0^\a(\G;\KK^d)=\KK^d/(\a(g)(v)-v \, |\, v\in \KK^d, g\in \G)$ is also non--trivial.
Now pick a non--singular form $\ll \, ,\, \rr$ on $\KK^d$ and denote by $\a^\d\colon\pi\to \gl(d,\KK)$ the unique representation which satisfies
$\ll \a^\d(g)v,\a(g)w \rr =\ll v,w\rr$ for all $g\in \pi, v,w\in \KK^d$.
We now let $Y=K(\G,1)$ and we denote by $\widetilde{Y}$ the universal cover of $Y$.
We write $(\KK^d)_\a$ to denote $\KK^d$ with the $\G$--action given by $\a$, and similarly we write $(\KK^d)_{\a^\d}$.
Using the inner
product we then get an isomorphism of $\KK$--module chain complexes:
\[
\ba{rcl} \hom_{\Z[\G]}(C_*(\widetilde{Y}),(\KK^d)_{\a^\d})&\to&
\hom_\KK\big(C_*(\wti{Y};
(\KK^d)_\a),\KK\big)
=\hom_\KK\big(C_*(\wti{Y}\otimes_{\Z[\G]} (\KK^d)_\a,\KK\big)\\
f&\mapsto& \left((c\otimes w)\mapsto \langle f(c),w\rangle\right). \ea
\]
Note that this map is well--defined since $\langle \b(g^{-1})v,w\rangle =\langle
v,\ol{\b}(g)w\rangle$. It is now easy to see that this defines in fact an isomorphism of
$\KK$--module chain complexes.
It now follows that
\[ \ba{rcl} H^i_{\a^\d}(\G;\KK^d)&=& H_i(\hom_{\Z[\G]}(C_*(\wti{Y}),(\KK^d)_{\a^\d}))\\
&\cong & H_i(\hom_\KK\big(C_*(\wti{Y};
(\KK^d)_\a),\KK\big))\\
&\cong & H_i(C_*(\wti{Y};
(\KK^d)_\a\big))\\
&=&H_i^\a(\G;\KK^d).\ea\]
Note that the second to last isomorphism is given by  the universal coefficient theorem.
Recall (cf. again \cite[Section~VI]{HS97}) that
\[ H^0_{\a^\d}(\G;\KK^d) =\{ v\in \KK^d\, |\, \a^\d(g)(v)=v\, \mbox{ for all }g\in \G, v\in \KK^d\}.\]
We now write $V:=\{ v\in \KK^d\, |\, \a^\d(g)(v)=v\, \mbox{ for all }g\in \G, v\in \KK^d\}$ and we let $W\subset \KK^d$ be the orthogonal complement of $V$. In particular  $V\oplus W=\KK^d$. Note that $V$ is non--trivial by assumption and note that $W$ is non--trivial since $\G=\ker(\phi)$ is non--trivial and since $\a$ (and hence $\a^\d$) is non--trivial when restricted to $\G$ by our assumption.

Note that with respect to the decomposition $\KK^d=V\oplus W$ we have
\[ \a^\d(g)=\bp \id & * \\ 0& *\ep \]
for any $g\in \G$. In particular  there exist maps $A\colon\pi\to \hom(W,V)$ and $B\colon\pi\to \endo(W)$ such that
\[ \a^\d(g)=\bp \id & A(g) \\ 0 &B(g) \ep \]
for any $g\in \G$.
We now write
\[ \a^\d(\mu)=\bp C&D\\ E&F \ep\]
with $C\in \endo(V), D\in \hom(W,V), E\in \hom(V,W), F\in \endo(W)$.
For any $g\in \G$ we have $g\mu =\mu (\mu^{-1} g\mu)$ with $\mu^{-1} g\mu\in \G$. In particular we have
\[ \bp \id & A(g) \\ 0&B(g) \ep \cdot \bp C&D\\ E&F \ep   = \bp C&D\\ E&F \ep \cdot  \bp \id &  A(\mu^{-1} g\mu) \\ 0&B(\mu^{-1} g\mu)\ep \]
for any $g\in \G$. Considering the first block column we see that
\[ \bp A(g)E \\ B(g)E\ep = \bp 0\\E \ep.\]
If $E\colon V\to W$ was non--trivial, then there would exist $v\in V$ such that $w=E(v)$ is non--trivial.
Given any $g\in \G$ we then have
\[ \ba{rcllll} \a^\d(g)(w)&=& \bp \id & A(g) \\ 0 &B(g) \ep \bp 0\\ w \ep &=& \bp \id & A(g) \\ 0 &B(g) \ep \bp 0\\ Ev \ep\\[4mm]
&=& \bp \id & A(g)E \\ 0 &B(g)E \ep \bp 0\\ v \ep
&=& \bp \id & 0 \\ 0 &E\ep \bp 0\\ v \ep\\[4mm]
&=&  \bp 0\\ Ev \ep
&=&w.\ea \]
  But this is not possible by the definition of $V$.
We therefore conclude that $E$ is the zero homomorphism, hence the representation $\a^\d$ restricts to a representation of $V$.
It is now straightforward to see that $\a$ preserves $W$. Since $W$ is neither zero nor all of $\KK^d$ this shows that $\a$ is reducible.
\end{proof}

\end{document}